\newcommand{\eps}{\varepsilon}
\newcommand{\ph}{\varphi}
\newcommand{\NN}{\mathbb{N}}
\newcommand{\RR}{\mathbb{R}}
\newcommand{\one}{\mathbf{1}}
\newcommand{\AAA}{\mathcal{A}}
\newcommand{\FFF}{\mathcal{F}}
\newcommand{\EE}{\mathbb{E}}
\newcommand{\PP}{\mathbb{P}}
\newcommand{\MMM}{\mathcal{M}}
\newcommand{\WWW}{\mathcal{W}}
\newcommand{\Mac}{\MMM^{\mathrm{ac}}}
\newcommand{\KK}{\mathbf{K}}
\newcommand{\II}{\mathbf{I}}
\newcommand{\JJ}{\mathbf{J}}
\newcommand{\Mh}{\MMM^{\mathrm{h}}}
\newcommand{\GGG}{\mathcal{G}}
\newcommand{\PPP}{\mathcal{P}}
\newcommand{\QQQ}{\mathcal{Q}}
\newcommand{\RRR}{\mathcal{R}}
\newcommand{\LLL}{\mathcal{L}}
\newcommand{\SSS}{\mathcal{S}}
\newcommand{\NNN}{\mathcal{N}}
\newcommand{\XX}{\mathcal{X}}
\newcommand{\llim}{\varliminf}
\newcommand{\ulim}{\varlimsup}
\newcommand{\ba}{{\overline{\theta}}}
\newcommand{\bl}{{\overline{\lambda}}}
\newcommand{\jj}{\mathbf{j}}
\newcommand{\ld}{\underline{\delta}}
\newcommand{\ud}{\overline{\delta}}
\newcommand{\grad}{\nabla}
\DeclareMathOperator{\graph}{graph}
\DeclareMathOperator{\Leb}{Leb}
\DeclareMathOperator{\Id}{Id}
\theoremstyle{plain}
\newtheorem{theorem}{Theorem}[section]
\newtheorem{proposition}[theorem]{Proposition}
\newtheorem{corollary}[theorem]{Corollary}
\newtheorem{lemma}[theorem]{Lemma}
\newtheorem{thma}{Theorem}
\newtheorem{thm}[theorem]{Theorem}
\theoremstyle{definition}
\newtheorem{definition}[theorem]{Definition}
\theoremstyle{remark}
\newtheorem{remark}[theorem]{Remark}
\numberwithin{equation}{section}
\title[Non-stationary non-uniform hyperbolicity]{Non-stationary non-uniform hyperbolicity:  SRB measures for dissipative maps}
\author{Vaughn Climenhaga}
\address{Department of Mathematics \\ University of Houston \\ Houston, TX  77204, USA}
\email{climenha@math.uh.edu}
\urladdr{http://www.math.uh.edu/$\sim$climenha/}
\author{Dmitry Dolgopyat}
\address{Department of Mathematics \\ University of Maryland \\ College Park, MD 20742, USA}
\email{dmitry@math.umd.edu}
\urladdr{http://www.math.umd.edu/$\sim$dmitry/}
\author{Yakov Pesin}
\address{Department of Mathematics \\ McAllister Building \\ Pennsylvania State University \\ University Park, PA 16802, USA}
\email{pesin@math.psu.edu}
\urladdr{http://www.math.psu.edu/pesin/}
\begin{document}

\date{\today}
\begin{abstract}
We prove the existence of SRB measures for diffeomorphisms where a positive volume set of initial conditions satisfy an ``effective hyperbolicity'' condition that guarantees certain recurrence conditions on the iterates of Lebesgue measure.  We give examples of systems that do not admit a dominated splitting but can be shown to have SRB measures using our methods.
\end{abstract}

\thanks{V.C.\ and Ya.P.\ were partially supported by NSF grant DMS-0754911.  V.C.\ was partially supported by NSF grant DMS-1362838. Ya.P.\ was partially supported by NSF grants DMS-1101165 and DMS-1400027.  D.D.\ was partially supported by NSF grant DMS-1101635.  Parts of this research were carried out at
the Fields Institute, the EPFL and ICERM} 

\maketitle

\section{Introduction and main results}

\subsection{Attractors and SRB measures}

Let $M$ be a $d$-dimensional smooth Riemannian manifold, $U\subset M$ an open set such that $\overline{U}$ is compact, and $f\colon U\to M$ a $C^{1+\alpha}$ diffeomorphism onto its image such that $\overline{f(U)} \subset U$; in this case $U$ is called a \emph{trapping region} for $f$.  The simplest case is when $U=M$, so $f$ is defined on the entire manifold, but there are many important examples in which $U \neq M$, and as usual we write
$\Lambda = \bigcap_{n\geq 0} f^n(U)$
for the \emph{topological attractor} onto which the trajectories of $f$ accumulate.

A central question in smooth ergodic theory is to determine whether $f$ admits a \emph{physical measure}: an invariant measure that governs the statistical properties of volume-typical points.  For hyperbolic dynamical systems, the most important class of such measures are the \emph{SRB measures}, which were first constructed for uniformly hyperbolic systems by Sinai \cite{yS72}, Ruelle \cite{dR76,dR78}, and Bowen \cite{rB75}, using symbolic techniques based on Markov partitions.  A significant generalization of this approach to non-uniformly hyperbolic systems, based on countable-state symbolic dynamics using \emph{towers} of a special type, was achieved by Young \cite{lY98}.

We study a different construction of SRB measures, which is more directly geometric and has its roots in work of Pesin and Sinai \cite{PS82} on
partially hyperbolic systems, for which symbolic models may not be available.
A similar geometric approach was used by Pesin in \cite{yP92} for hyperbolic systems with singularities, by Carvalho in \cite{mC93} for a class of `derived from Anosov' diffeomorphisms, and by Alves, Bonatti, and Viana in \cite{BV00,ABV00} for more general attractors with a dominated splitting. 

Our main results give general criteria for existence of an SRB measure that can be applied without a dominated splitting; these use the notion of `effective hyperbolicity' introduced by the first and third authors in \cite{CP16}.

We recall the definition of SRB measure  from~\cite[Definition 13.1.1]{BP07}.  Let $\mu$ be an invariant measure and suppose that $\mu$ is hyperbolic -- that is, all Lyapunov exponents of $\mu$ are non-zero.  For $\mu$-a.e.\ $x$ there are local stable and unstable manifolds $V^s(x)$ and $V^u(x)$ through $x$.  It is easy to see that for each such $x$ one has $V^u(x) \subset \Lambda$.
Let $Y_\ell$ be a \emph{regular set}, i.e.\ a set of points for which the size of the local stable and unstable manifolds is bounded away from 0.  Given $r>0$, let
$Q_\ell(x) = \bigcup_{w\in Y_\ell \cap B(x,r)} V^u(w)$ for every $x\in Y_\ell$, where $V^u(w)$ is the local unstable manifold through $w$.  Denote by $\xi(x)$ the partition of $Q_\ell(x)$ by these manifolds, and let $\mu^u(w)$ be the conditional measure on $V^u(w)$ generated by $\mu$ with respect to the partition $\xi$.

\begin{definition}\label{def:SRB}
A hyperbolic invariant measure $\mu$ is called an \emph{SRB measure} if for any regular set $Y_\ell$ of positive measure and almost every $x\in Y_\ell$ and
$w\in Y_\ell \cap B(x,r)$, the conditional measure $\mu^u(w)$ is absolutely continuous with respect to the leaf volume $m_{V^u(w)}$ on $V^u(w)$.
\end{definition}

Every SRB measure has measure-theoretic entropy equal to the sum of its positive Lyapunov exponents; in fact, this is equivalent to the absolute continuity condition in Definition \ref{def:SRB} \cite{LY85}.

Certain ergodic properties follow automatically once we have an SRB measure.  For example, it was shown by  Ledrappier \cite{fL84} that the number of ergodic SRB measures supported on $\Lambda$ is at most countable, and that each ergodic SRB measure $\mu$ is Bernoulli up to a period (there is a $n\in \NN$ such that $(\Lambda, f^n, \mu)$ is conjugate to a Bernoulli shift).  In many cases one can in fact guarantee that the number of SRB measures is finite, or even that there is at most one SRB measure \cite{BV00,ABV00,HHTU11}.

\subsection{Effective hyperbolicity}

Given $x\in M$, a subspace $E(x) \subset T_x M$, and $\tilde\theta(x) > 0$, the \emph{cone} at $x$ around $E(x)$ with angle $\tilde\theta(x)$ is
\begin{equation}\label{eqn:cone}
K(x,E(x),\tilde\theta(x)) = \{ v\in T_x M \mid \measuredangle (v,E(x)) < \tilde\theta(x) \}.
\end{equation}
If $E$ is a measurable distribution on $A\subset M$ and the angle function $\tilde\theta\colon A\to \RR^+$ is measurable, then~\eqref{eqn:cone} defines a \emph{measurable cone family} 
on $A$.
Throughout the paper we will make the following standing assumption.
\begin{enumerate}[label=\textup{\textbf{(H)}}]
\item\label{H1} There exists a forward-invariant set $A\subset U$ of positive volume with two measurable cone families 
$K^s(x), K^u(x)\subset T_xM$ such that
\begin{enumerate}
\item $\overline{Df(K^u(x))} \subset K^u(f(x))$ for all $x\in A$;
\item $\overline{Df^{-1}(K^s(f(x)))} \subset K^s(x)$ for all $x\in f(A)$.
\item $K^s(x) = K(x,E^s(x),\tilde\theta_s(x))$ and $K^u(x) = K(x,E^u(x),\tilde\theta_u(x))$ are such that $T_x M = E^s(x) \oplus E^u(x)$; moreover $d_s = \dim E^s(x)$ and $d_u = \dim E^u(x)$ do not depend on $x$.
\end{enumerate}
\end{enumerate}

Such cone families automatically exist if $f$ is uniformly hyperbolic on $\Lambda$, or more generally, if there is a dominated splitting on a compact forward-invariant set (see \eqref{eqn:DS}); in this case $K^{s,u}$ are continuous.  We emphasize, however, that in our setting $K^{s,u}$ are not assumed to be continuous, but only measurable. Moreover, the families of subspaces $E^{u,s}(x)$ are not assumed to be invariant, although we could follow the procedure described in \cite[Remark 2.2]{CP16} to replace them with invariant families.

Let $A\subset U$ be a forward-invariant set satisfying \ref{H1}.  
Following \cite{CP16},  define $\lambda^u, \lambda^s\colon A\to \RR$ by\footnote{In \cite{CP16} these definitions are made for a family of maps $f_n\colon \RR^d\to \RR^d$ defined in a neighborhood of the origin; each orbit in $A$ gives such a family by writing $f$ in local coordinates around the points $x, f(x), f^2(x),\dots$.}
\begin{equation}\label{eqn:luls}
\begin{aligned}
\lambda^u(x) &= \inf \{\log \|Df(v)\| \mid v\in K^u(x), \|v\|=1 \}, \\
\lambda^s(x) &= \sup \{\log \|Df(v)\| \mid v\in K^s(x), \|v\|=1 \}.
\end{aligned}
\end{equation}
We define the \emph{defect from domination} at $x$ to be
\begin{equation}\label{eqn:defect}
\Delta(x) = \tfrac 1\alpha \max(0,\ \lambda^s(x) - \lambda^u(x)),
\end{equation}
where we recall that $\alpha \in (0,1]$ is the H\"older exponent of $Df$.\footnote{If $f$ is $C^r$ for some $r\geq 2$, then we have $\alpha = 1$; our techniques do not distinguish between $C^2$ maps and $C^r$ maps with $r>2$.  On the other hand, $\alpha>0$ is essential; our results do not apply to $C^1$ maps.}
Roughly speaking, $\Delta(x)$ controls how much the curvature of unstable manifolds can grow as we go from $x$ to $f(x)$; see Theorem \ref{thm:HP2} and Remark \ref{rmk:HP2} for further discussion of the role this quantity plays.  When $f$ has a dominated splitting, we have $\Delta(x)=0$ everywhere; see \S\ref{sec:DS}.

The following quantity is positive whenever $f$ expands vectors in $K^u(x)$ and contracts vectors in $K^s(x)$:  
\begin{equation}\label{eqn:le}
\lambda(x) = \min ( \lambda^u(x) - \Delta(x), -\lambda^s(x)).
\end{equation}
The \emph{upper asymptotic density} of 
$\Gamma\subset \NN$ is
$\ud(\Gamma) = \ulim_{N\to\infty} \tfrac 1N \#\Gamma \cap [0,N).$
An analogous definition gives the lower asymptotic density $\ld(\Gamma)$.
Denote the angle between the boundaries of $K^s(x)$ and $K^u(x)$ by
\begin{equation}\label{eqn:thetax}
\theta(x) = \inf\{\measuredangle(v,w) \mid v\in K^u(x), w\in K^s(x) \}.
\end{equation}
We say that a point $x\in A$ is \emph{effectively hyperbolic} if 
\begin{enumerate}[label=\textup{\textbf{(EH\arabic{*})}}]
\setcounter{enumi}{0}
\item\label{eqn:eh1} $\llim_{n\to\infty} \frac 1n \sum_{k=0}^{n-1} \lambda(f^k x) > 0$,
\end{enumerate}
and if in addition we have
\begin{enumerate}[label=\textup{\textbf{(EH\arabic{*})}}]
\setcounter{enumi}{1}
\item \label{eqn:eh2}
$\lim_{\bar\theta\to 0} \ud\{n \mid \theta(f^nx) < \bar\theta\} = 0$.
\end{enumerate}
A related (but not identical) definition of effective hyperbolicity is given in \cite{CP16}.  Condition \ref{eqn:eh1} says that not only are the Lyapunov exponents of $x$ positive for vectors in $K^u$ and negative for vectors in $K^s$, but  $\lambda^u$ gives enough expansion to overcome the `defect from domination' given by $\Delta$.\footnote{Because we first take the minimum of $\lambda^u$ and $-\lambda^s$, and then take a limit, \ref{eqn:eh1} also requires a certain amount of contraction in $K^s$ and expansion in $K^u$ to happen \emph{simultaneously}.  This can be weakened by the introduction of a more technical condition involving \emph{effective hyperbolic times}; see Condition \ref{eqn:eh3} in \S\ref{sec:eht}.  
}
Condition \ref{eqn:eh2} requires that  the frequency with which the angle between the stable and unstable cones drops below a specified threshold $\bar\theta$ can be made arbitrarily small by taking the threshold to be small.  


If $\Lambda$ is a hyperbolic attractor for $f$, then \emph{every} point $x\in U$ is effectively hyperbolic.  If $f$ has a dominated splitting, then since Condition \ref{eqn:eh2} is automatic and $\Delta(x)=0$,  effective hyperbolicity of $x$ reduces to the requirement that the orbit of $x$ be asymptotically expanding along $E^u(x)$ and asymptotically contracting along $E^s(x)$, as in \cite{ABV00,BV00};\footnote{Modulo the simultaneity issue associated to \ref{eqn:eh1}, which can be addressed as in \S\ref{sec:eht}, and which also arises in \cite{ABV00}.} 
we discuss this in more detail in \S\ref{sec:DS}, after stating our main results.

\subsection{Main results}

Let $A$ satisfy \ref{H1}, and let $S\subset A$ be the set of effectively hyperbolic points.
Observe that effective hyperbolicity is determined in terms of a forward asymptotic property of the orbit of $x$, and hence $S$ is forward invariant under $f$.  

\begin{thma}\label{thm:main}
Let $f$ be a $C^{1+\alpha}$ diffeomorphism of a compact manifold $M$, and $\Lambda$ a topological attractor for $f$.  Assume that 
\begin{enumerate}
\item $f$ admits measurable invariant cone families as in \ref{H1}; 
\item the set $S$ of effectively hyperbolic points satisfies $\Leb S > 0$.
\end{enumerate}
Then $f$ has an SRB measure supported on 
$\Lambda$.
\end{thma}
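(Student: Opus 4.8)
The plan is to follow the classical Pesin--Sinai strategy of pushing forward leaf volume on admissible unstable manifolds and extracting a weak-$*$ limit, but with the recurrence controlled by effective hyperbolicity rather than by a dominated splitting. First I would use Condition \ref{H1} together with the pointwise bounds \eqref{eqn:luls}--\eqref{eqn:le} to run a Hadamard--Perron-type graph transform argument along orbits of effectively hyperbolic points; this should yield, for a positive-volume subset $S'\subset S$, a family of local unstable manifolds $V^u(x)$ that are $C^{1+\alpha}$ graphs over $E^u(x)$, tangent to $K^u(x)$, with curvature controlled (this is where the defect from domination $\Delta(x)$ enters, via Theorem \ref{thm:HP2} and Remark \ref{rmk:HP2}), and with a quantitative lower bound on their size on a positive-frequency set of times. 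The role of \ref{eqn:eh1} is to make the contraction/expansion estimates summable along a subsequence; the role of \ref{eqn:eh2} is to keep the manifolds from becoming arbitrarily thin or badly transverse on that subsequence. This step should be extracted almost verbatim from \cite{CP16}.

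Next I would fix one such admissible unstable disc $W$ (of positive leaf volume) sitting over a density point of $S'$, normalize leaf volume $m_W$ to a probability measure, and set $\mu_n = \frac1n\sum_{k=0}^{n-1} f^k_* m_W$. Since $\overline{f(U)}\subset U$ and $\Lambda=\bigcap f^n(U)$ is compact, the sequence $(\mu_n)$ is tight, so along a subsequence $\mu_{n_j}\to\mu$ weak-$*$ with $\supp\mu\subset\Lambda$ and $f_*\mu=\mu$. The heart of the argument is to show $\mu\neq 0$ and that $\mu$ is an SRB measure, i.e.\ that its conditional measures on unstable leaves are absolutely continuous with respect to leaf volume. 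For this I would argue that the effective-hyperbolicity time estimates give uniform (in $k$) bounds on the densities of $f^k_* m_W$ restricted to the ``good'' sub-discs at hyperbolic times --- the standard bounded-distortion estimate along unstable leaves, using the $C^{1+\alpha}$ regularity and the curvature bound --- and that the positive lower density of good times guarantees a definite proportion of the mass of $\mu_n$ lives on such discs with densities bounded above and below. Passing to the limit and using a measurable partition into unstable leaves (as in Definition \ref{def:SRB}) then shows the conditionals of $\mu$ are absolutely continuous, hence $\mu$ is a nonzero SRB measure on $\Lambda$; non-triviality of $\mu$ follows from the uniform lower bound on the sizes and masses of the good sub-discs at the positive-density set of hyperbolic times.

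The main obstacle I expect is precisely the interface between the non-uniformity of the hyperbolicity and the distortion control: because the cone families and the splitting $E^s\oplus E^u$ are only measurable and $\theta(f^n x)$ can approach $0$ (controlled only in density by \ref{eqn:eh2}), one cannot work on a single compact regular set with uniform constants, so the bounded-distortion and curvature estimates must be set up along individual orbits with constants that depend on the orbit but are uniform along a positive-density subsequence of times --- and one must check that the ``bad'' times (small angle, or where \ref{eqn:eh1} has not yet kicked in) carry asymptotically negligible mass, so that they do not destroy the absolute continuity in the limit. Organizing this via effective hyperbolic times (Condition \ref{eqn:eh3} and \S\ref{sec:eht}), so that at each such time the relevant piece of $f^k W$ is a full-size admissible manifold with uniformly bounded distortion, is the technical crux; once that bookkeeping is in place the weak-$*$ limit and absolute continuity arguments are standard.
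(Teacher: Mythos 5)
Your overall strategy coincides with the paper's (push forward leaf volume on a reference disc, control returns via effective hyperbolic times and the Hadamard--Perron theorem of \cite{CP16}, extract a weak* limit), but two steps as you state them have genuine gaps. First, the opening move is not available: effective hyperbolicity is a purely forward-asymptotic condition, and Theorem \ref{thm:HP2} does not produce local unstable manifolds $V^u(x)$ through points of $S$ --- it produces, at each effective hyperbolic time $n$, a uniformly-sized admissible manifold through $f^n(x)$ as the image of a \emph{previously chosen} initial admissible disc; points of $S$ need not lie in $\Lambda$ nor carry any backward-orbit control, so there is no $V^u(x)$ to speak of. More seriously, even granting some family of discs, choosing one ``over a density point of $S'$'' does not give what the push-forward argument needs, namely $m_W(\{x\in S\cap W: T_xW\subset K^u(x)\})>0$: ambient Lebesgue density says nothing about leaf volume on a particular $d_u$-dimensional slice. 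The passage from $\Leb S>0$ to such a disc requires a separate selection/Fubini argument --- cut $S$ down to a set where $\theta(x)$ is uniformly bounded below, pick a direction from a finite net in the Grassmannian so that the exponential images of the parallel affine discs are tangent to $K^u$ at a positive-volume set of points, and apply Fubini to find a leaf with positive leaf measure of good points. This is exactly Proposition \ref{prop:reduction}, and without it your choice of $W$ is unjustified.

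Second, the endgame overclaims. Knowing that a definite proportion of the mass of $\mu_n$ sits on standard pairs with uniformly bounded densities does \emph{not} show that the conditionals of the full limit $\mu$ are absolutely continuous, nor that $\mu$ is hyperbolic; in general $\mu$ will contain a singular and/or non-hyperbolic part, and only an ergodic component can be shown to be SRB. To extract that component one needs (i) compactness of the space of standard pairs so that the controlled part survives the limit (Proposition \ref{prop:R-cpt}); (ii) a Lebesgue-type decomposition $\mu=\mu^{(1)}+\mu^{(2)}$ with $\mu^{(1)}\in\Mac$ invariant and nontrivial (Proposition \ref{prop:decomposition}); and (iii), crucially, a hyperbolicity argument: absolute continuity along the admissible leaves only governs the expanding directions, and you must separately force negative exponents transverse to the leaves. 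In the paper this is the role of the stable-time sets $\Gamma^s_{\JJ,q}$ in \eqref{eqn:shtimes} and of the requirement in \eqref{eqn:HCLN}--\eqref{eqn:RRR} that a proportion $\beta$ of each returned leaf consist of points with backward expansion transverse to the leaf; your proposal imposes no condition of this kind at the return times, so the limiting component you produce could a priori have zero transverse exponents and fail to be an SRB measure in the sense of Definition \ref{def:SRB}. With these two repairs --- the reduction of Proposition \ref{prop:reduction} and the simultaneous stable control plus decomposition/hyperbolicity argument of \S\S\ref{sec:main-pfs}--\ref{sec:pf} --- your outline matches the paper's proof.
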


A similar result can be formulated given information about the set of effectively hyperbolic points on a single `approximately unstable' submanifold\footnote{Such manifolds are usually called \emph{admissible}; the precise definition is not needed for the statement of Theorem \ref{thm:main2}, and will be given in \S\ref{sec:overview}.  All we need here is to have $T_x W \subset K^u(x)$ for `enough' points $x$.} $W\subset U$.  
Let $d_u$, $d_s$, and $A$ be as in \ref{H1}, and let $W\subset U$ be an embedded submanifold of dimension $d_u$; write $m_W$ for the volume induced on $W$ by the Riemannian metric.

\begin{thma}\label{thm:main2}
Let $f$ be a $C^{1+\alpha}$ diffeomorphism of a compact manifold $M$, and $\Lambda$ a topological attractor for $f$.  Assume that 
\begin{enumerate}
\item $f$ admits measurable invariant cone families as in \ref{H1};
\item there is a $d_u$-dimensional embedded submanifold $W\subset U$ such that $m_W(\{x\in S\cap W \mid T_xW \subset K^u(x)\}) > 0$.
\end{enumerate}
Then $f$ has an SRB measure supported on 
$\Lambda$.
\end{thma}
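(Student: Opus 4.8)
The plan is to follow the geometric route to SRB measures going back to Pesin--Sinai \cite{PS82} and developed by Alves--Bonatti--Viana \cite{ABV00,BV00}: iterate leaf volume on the good part of $W$, average, and extract a weak$^*$ limit whose conditional measures on unstable leaves are absolutely continuous. Write $G=\{x\in S\cap W\mid T_xW\subset K^u(x)\}$, so $m_W(G)>0$ by hypothesis, and recall that \ref{eqn:eh1} and \ref{eqn:eh2} hold at every point of $G\subset S$. First I would use Egorov's theorem --- applied to the liminf in \ref{eqn:eh1} and to the convergence in \ref{eqn:eh2} --- to pass to a subset $G_\ell\subset G$ of positive $m_W$-measure on which there are $\chi>0$ and $N_0\in\NN$ with $\frac1n\sum_{k=0}^{n-1}\lambda(f^kx)\geq\chi$ for all $n\geq N_0$, together with an angle $\bar\theta_0>0$ for which $\ud\{n\mid\theta(f^nx)<\bar\theta_0\}$ is as small as we wish, uniformly over $x\in G_\ell$. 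A Pliss-type lemma applied to $k\mapsto\lambda(f^kx)$ then produces, for each $x\in G_\ell$, a set of Pliss times of lower density at least some $\delta=\delta(\chi)>0$ at which the backward sums $\sum_{j=m}^{n-1}\lambda(f^jx)$ are uniformly large; intersecting with the good-angle times (of density $\geq 1-\delta/2$ after shrinking $G_\ell$ once more) leaves a set $H(x)\subset\NN$ of \emph{effective hyperbolic times}, still of positive lower density, at which additionally $\theta(f^nx)\geq\bar\theta_0$. This is the step where \ref{eqn:eh2} enters essentially.

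The geometric heart of the proof is control of the forward images $f^n(W)$ at these times. Invoking the estimates carried over from \cite{CP16} (cf.\ Theorem~\ref{thm:HP2} and Remark~\ref{rmk:HP2}), I would produce $\rho_0>0$ and $C_1,C_2<\infty$, independent of $x$ and $n$, such that for every $x\in G_\ell$ and every effective hyperbolic time $n$ of $x$ the component $W_n(x)$ of $f^n(W)\cap B(f^nx,\rho_0)$ through $f^nx$ is an admissible manifold with curvature at most $C_1$, and $f^n$ restricted to $f^{-n}(W_n(x))\cap W$ has unstable Jacobian of distortion at most $C_2$. Here the H\"older exponent $\alpha$ and the defect $\Delta$ enter decisively: it is exactly the requirement in \ref{eqn:eh1} that $\lambda^u$ beat $\Delta$ that prevents the curvature of the iterated leaves --- which evolves by a recursion governed by $\Delta$ and the H\"older norm of $Df$ --- from blowing up at hyperbolic times. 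Consequently $f^n_*(m_W|_{f^{-n}(W_n(x))\cap W})$, viewed on $W_n(x)$, is absolutely continuous with respect to $m_{W_n(x)}$ with log-bounded density.

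Next I would set $\mu_n=\frac{1}{m_W(G_\ell)}\cdot\frac1n\sum_{k=0}^{n-1}f^k_*(m_W|_{G_\ell})$ and pass to a weak$^*$ subsequential limit $\mu$. Standard arguments give that $\mu$ is an $f$-invariant probability measure, and $\supp\mu\subset\Lambda$ since $f^k(W)$ accumulates on $\Lambda$. By the density estimate for $H(x)$ together with Fatou's lemma, along the subsequence a fraction at least $\delta/2$ of the mass of $n\mu_n$ is carried by the pieces $W_k(x)$, which have uniform size $\rho_0$, curvature $\leq C_1$, and log-bounded densities, and which are uniformly transverse to the stable cones because $\theta(f^kx)\geq\bar\theta_0$ there. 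Since the family of measures carried by admissible manifolds of size $\rho_0$ and curvature $\leq C_1$ with log-bounded densities is weak$^*$-precompact and stable under the averaging, passing to the limit shows that $\mu$ restricted to a set of positive measure has conditional measures on local unstable manifolds absolutely continuous with respect to leaf volume, the transversality bound making that set a regular set in the sense of Definition~\ref{def:SRB}. Finally \ref{eqn:eh1} forces the Lyapunov exponents of $\mu$-a.e.\ point of that set to be positive along $K^u$ and negative along $K^s$, so $\mu$ is hyperbolic; restricting to an ergodic component on which the absolute continuity survives yields the SRB measure asserted.

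I expect the main obstacle to be this last limiting step: guaranteeing that the absolute continuity of the unstable conditionals survives the weak$^*$ limit, and that the limit is not supported on a set with a vanishing exponent or a degenerate unstable lamination. Both parts of effective hyperbolicity are needed for this --- \ref{eqn:eh1}, through the defect $\Delta$ and the H\"older regularity of $Df$, to keep the curvature and distortion of the pushed-forward pieces uniformly bounded along a positive-density sequence of times; and \ref{eqn:eh2} to keep the pushed-forward mass from concentrating where the stable and unstable cones are nearly tangent, where there are no local unstable manifolds of definite size and the limiting conditionals could collapse. A secondary difficulty is the interplay between the simultaneity of expansion and contraction built into \ref{eqn:eh1} and the distortion bounds; if need be this can be circumvented using the effective hyperbolic times of \S\ref{sec:eht}, as in the footnote to \ref{eqn:eh1}.
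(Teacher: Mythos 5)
Your overall strategy is the paper's own: push forward $m_W$ restricted to the good set, use Pliss-type effective hyperbolic times together with the Hadamard--Perron result of \cite{CP16} to get uniformly admissible images with bounded distortion, and extract a weak* limit carrying a definite amount of ``standard-pair'' mass. The gap is in the endgame, and it is a genuine one. First, your claim that \ref{eqn:eh1} ``forces the Lyapunov exponents of $\mu$-a.e.\ point of that set to be positive along $K^u$ and negative along $K^s$, so $\mu$ is hyperbolic'' does not follow and is false in general: effective hyperbolicity is a property of Lebesgue-a.e.\ point of $W$, not of $\mu$-generic points of the limit, and in the very examples of \S\ref{sec:app} the limit of $\frac1n\sum_k f^k_*m_W$ can charge the indifferent fixed point, so $\mu$ itself need not be hyperbolic. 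The unstable-side exponents are indeed handled by the backward-contraction property built into the admissible pieces (the class $\QQQ_{\JJ,n}$, so that limits are genuine unstable manifolds), but nothing in your construction controls the \emph{stable-direction} exponents of the limit. The paper's device for this is the sets $H_{\JJ,q}(W)$ of \eqref{eqn:HCLN} and the requirement in \eqref{eqn:RRR} that each standard pair carry a fixed fraction $\beta$ of points with controlled transverse backward expansion; this is fed in via Lemma \ref{lem:eht2}, whose hypothesis \eqref{eqn:mWnx} is exactly why one must track the proportion of good points inside each $W_n^x$ (and why the disjointification via the Besicovitch-type Lemma \ref{lem:besicovitch} is needed, another step you elide). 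It is this $\beta$-fraction, surviving as $q\to\infty$, that makes the set of points with all exponents nonzero have positive measure for the limiting sub-measure $\nu$, and hence for some ergodic component.

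Second, even granting a sub-measure $\nu\le\mu$ supported on standard pairs over genuine unstable leaves, your final sentence (``restricting to an ergodic component on which the absolute continuity survives'') hides the actual mechanism: absolute continuity of a sub-measure does not by itself say anything about the conditional measures of the ergodic components of $\mu$. The paper resolves this with the Lebesgue-type decomposition $\mu=\mu^{(1)}+\mu^{(2)}$, $\mu^{(1)}\in\Mac$ invariant (Proposition \ref{prop:decomposition}), the inequality $\nu\le\mu^{(1)}$ forced by $\nu\in\Mac$ and $\mu^{(2)}\perp\Mac$, and the characterization $\Mac\cap\Mh=\{\text{SRB measures}\}$ (Proposition \ref{prop:MacSRB}), together with the compactness statement Proposition \ref{prop:R-cpt} to get $\nu\in\bigcap_N\MMM_{\KK,N}$. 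Without some substitute for this decomposition step and for the transverse-exponent bookkeeping, the argument as proposed does not close.
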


The geometric approach that we follow is to consider the  measures $\mu_n = \frac 1n \sum_{k=0}^{n-1} f_* m_W$, pass to a convergent subsequence $\mu_{n_k} \to \mu$, and prove that some ergodic component of $\mu$ is an SRB measure.  In \S\ref{sec:overview} we will describe how to establish this fact, but first in \S\ref{sec:DS} we discuss the relationship to previous similar results using dominated splittings, and then in  \S\ref{sec:app} we give some new examples to which our results can be applied.


\subsection{Related results}\label{sec:DS}

Let $f$ be a $C^2$ diffeomorphism and $A$ a forward-invariant compact set.  A splitting $T_A M = E^s \oplus E^u$ is \emph{dominated} if there is $\chi < 1$ such that
\begin{equation}\label{eqn:DS}
\|Df|_{E^s(x)}\| < \chi \|Df|_{E^u(x)}^{-1}\|^{-1} \text{ for all } x\in A;
\end{equation}
equivalently, the splitting is dominated if $\lambda^s(x) < \lambda^u(x)$ for all $x\in A$.\footnote{Here $\lambda^s,\lambda^u$ are as in \eqref{eqn:luls}, using $E^{s,u}$ in place of $K^{s,u}$.  Since we can make the cones arbitrarily small, the change in the definition does not affect any of our inequalities.}  In \cite{ABV00}, Alves, Bonatti, and Viana considered systems with a dominated splitting for which
\begin{itemize}
\item $E^s$ is uniformly contracting: $\lambda^s(x) \leq -\bl < 0$ for all $x\in A$;
\item $E^u$ is `mostly expanding': there is $\tilde S\subset A$ with positive volume and
\begin{equation}\label{eqn:NUE}
\llim_{n\to\infty} \frac 1n \sum_{j=0}^{n-1} \lambda^u(f^j x) > 0 \text{ for all } x\in \tilde S.
\end{equation}
\end{itemize}
Under these conditions they proved \cite[Theorem A]{ABV00} that $f$ has an SRB measure supported on $\Lambda = \bigcap_{j=0}^\infty f^j(A)$, and that the same result is true if \eqref{eqn:NUE} holds on a positive Lebesgue measure subset of some disk transverse to $E^s$.  A similar result for the (easier) case when $E^u$ is uniformly expanding and $E^s$ is mostly contracting was given in \cite{BV00}.  A stronger version of the result in \cite{ABV00} was recently given in \cite{ADLP} using a tower construction.

Given a dominated splitting with a uniformly contracting $E^s$, we see immediately from \eqref{eqn:defect} and \eqref{eqn:le} that $\Delta(x)=0$ and $\lambda(x) = \lambda^u(x)$ for all $x\in A$, so that \eqref{eqn:NUE} is equivalent to \ref{eqn:eh1}.  Moreover, by continuity and compactness, the angle between $E^u$ and $E^s$ is bounded away from 0, so \ref{eqn:eh2} is automatic, and we conclude that the set $\tilde S$ in the above result is exactly the set $S$ from Theorems \ref{thm:main} and \ref{thm:main2}.  In this sense, our results generalize the main results on existence of SRB measures from \cite{BV00,ABV00}.\footnote{The results there also give criteria for uniqueness of the SRB measure, as well as establishing that almost every point in $S$ is in the basin of some SRB measure; we do not consider these questions here.}


The proof in \cite{ABV00} requires the notion of \emph{hyperbolic times}, introduced by Alves in \cite{jA00}.  These are times $n$ such that for some fixed $\sigma<1$, and every $0\leq k\leq n$, we have
\[
\prod_{j=n-k+1}^n \|Df^{-1}|_{E^{cu}_{f^j(x)}}\| \leq \sigma^k;
\text{ equivalently, }
 \sum_{j=n-k}^{n-1} \lambda^u(f^jx) \geq k|\log\sigma|.
\]
If $x$ satisfies \eqref{eqn:NUE}, then Pliss' lemma guarantees that the set of hyperbolic times for $x$ has positive lower asymptotic density.  A similar strategy runs through the heart of our main results: our conditions \ref{eqn:eh1}--\ref{eqn:eh2} guarantee a positive lower asymptotic density of \emph{effective hyperbolic times} at which we can apply a version of the Hadamard--Perron theorem proved in \cite{CP16}, allowing us to carry out the geometric construction of an SRB measure.

We point out that the simultaneity issue associated to \ref{eqn:eh1} arises already for dominated splittings.  Indeed, if one weakens the uniform contraction on $E^s$ and allows $\|Df|_{E^{s}(x)}\| > 1$ for some $x\in A$, then the approach in \cite{ABV00} requires more than merely combining \eqref{eqn:NUE} with the corresponding asymptotic inequality for $\lambda^s$.  As described in \cite[Proposition 6.4]{ABV00}, one can recover the result by requiring points in $H$ to have a positive asymptotic density of \emph{simultaneous $\sigma$-hyperbolic times}; that is, times $n$ such that
\begin{equation}\label{eqn:simul}
\prod_{j=n-k+1}^n \|Df^{-1}|_{E^{cu}_{f^j(x)}}\| \leq \sigma^k,
\quad
\prod_{j=n-k}^{n-1} \|Df|_{E^{cs}_{f^j(x)}}\| \leq \sigma^k
\end{equation}
for every $0\leq k\leq n$.  In the language of Theorems \ref{thm:main} and \ref{thm:main2}, the domination condition still gives $\Delta(x)=0$ everywhere, and so
\[
\lambda(x) = \min(-\log \|Df|_{E^{cs}_x}\|, -\log\|Df^{-1}|_{E^{cu}_{f(x)}}\|).
\]
Our condition \ref{eqn:eh1} guarantees a positive asymptotic density of `simultaneous effective hyperbolic times'.  In fact, our proofs can be carried out using a weaker condition that only requires us to control the stable direction for a period of time that is small relative to $n$; see \S\ref{sec:eht}, where \ref{eqn:eh1} is replaced with a condition \ref{eqn:eh3} that is easier to verify in some applications.

Overall, then, we can summarize the situation as follows.
In the geometric approach to construction of SRB measures, one needs good information on the dynamics and geometry of admissible manifolds and their images.  Ideally one wants \emph{hyperbolicity}: the unstable direction expands, the stable direction contracts.  If this happens all the time, we are in the uniformly hyperbolic setting and one can carry out the construction without too much trouble; this is described in \S\ref{sec:uh-construction}.  If hyperbolicity does not hold all the time, then we are in the non-uniformly hyperbolic setting and need two further conditions in order to play the game.
\begin{enumerate}
\item Domination: if one of the directions does not behave hyperbolically, then it at least is still dominated by the other direction.
\item Separation: the stable and unstable directions do not get too close to each other.
\end{enumerate}
For the dominated splittings considered in \cite{BV00,ABV00}, these two conditions hold uniformly and so one only needs to control the asymptotic hyperbolicity (expansion and contraction along stable and unstable directions).  For our more general setting, both domination and separation may fail at some points, and in order to control the geometry and dynamics of images of admissible manifolds, we need to replace `hyperbolicity' with `effective hyperbolicity'.  The two conditions \ref{eqn:eh1} and \ref{eqn:eh2} control the failures of domination and separation, respectively: the presence of $\Delta(x)$ in \ref{eqn:eh1} lets us control curvature of admissible manifolds when domination fails, and the condition on $\theta(x)$ in \ref{eqn:eh2} guarantees that separation does not fail too often.

In \S\ref{sec:app} we describe applications of our main results to specific examples of non-uniformly hyperbolic systems.  In \S\ref{sec:overview} we give an overview of the strategy for the proofs.  Then in \S\ref{sec:main-pfs} we give the proofs, modulo some technical lemmas that we defer to \S\ref{sec:pf}.  In \S\S\ref{sec:general-conditions}--\ref{sec:katok-pf} we prove the results on applications from \S\ref{sec:app}.

\subsection*{Acknowledgments}
We are grateful to the anonymous referee for many comments that helped us to improve the exposition significantly, and to Agnieszka Zelerowicz for pointing out an error in an earlier version of \S\ref{sec:bounded-distortion}.

\section{Applications}\label{sec:app}

\subsection{Large local perturbations of Axiom A systems: abstract conditions}

We will describe a class of non-uniformly hyperbolic examples to which our main results can be applied, establishing existence of an SRB measure. These examples are obtained by beginning with a uniformly hyperbolic system and making a large local perturbation that satisfies certain conditions.  In \S \ref{sec:katok} we describe explicitly a family of maps satisfying these conditions -- these are dissipative versions of the Katok map \cite{aK79}.

Let $M$ be a $d$-dimensional smooth Riemannian manifold and 
$U\subset M$ an open set such that $\overline{U}$ is compact.  Let $f\colon U\to M$ be a $C^{1+\alpha}$ diffeomorphism onto its image with $\overline{f(U)}\subset U$, and let $\Lambda = \bigcap_{n\geq 0} f^n(U)$ be the attractor for $f$.  Assume that $\Lambda$ is a hyperbolic set for $f$, so that for every $x\in \Lambda$ we have
\begin{equation}\label{eqn:axioma}
\begin{aligned}
T_x M &= E^u(x) \oplus E^s(x), \\
\|Df(x)(v^u)\| &\geq \chi \|v^u\| \text{ for all } v^u \in E^u(x), \\
\|Df(x)(v^s)\| &\leq \chi^{-1} \|v^s\| \text{ for all } v^s \in E^s(x),
\end{aligned}
\end{equation}
where $\chi>1$ is fixed.  Note that we pass to an adapted metric if necessary.  Note also that since the splitting is continuous in $x$, it extends to a small neighborhood of $\Lambda$, and so in particular we may assume without loss of generality that \eqref{eqn:axioma} continues to hold for all $x\in U$.

We assume that the unstable distribution $E^u$ is one-dimensional, and consider a map $g\colon U\to M$ that is a $C^{1+\alpha}$ diffeomorphism  onto its image such that $g=f$ outside of an open set $Z\subset U$.  Conditions \ref{C1}--\ref{C3} below are formulated in terms of the action of $g$ as trajectories pass through $Z$.  We are most interested in the case when $Z$ is a small neighborhood of a fixed point, so that there are some points whose $g$-orbits never leave $Z$.

Let $G\colon U\setminus Z\to U\setminus Z$ be the first return map.  Given 
$\gamma>0$, let $K_\gamma^{s,u}(x)$ be the stable and unstable cones of width $\gamma$ for the unperturbed map $f$. We require the following condition:
\begin{enumerate}[label=\textbf{(C\arabic{*})}]
\item\label{C1}
There is $\gamma>0$ such that $\overline{DG(K_\gamma^u(x))} \subset K_\gamma^u(G(x))$ and $DG(K_\gamma^s(x)) \supset \overline{K_\gamma^s(G(x))}$ for every $x\in U\setminus Z$.
\end{enumerate}
Extend the cone families $K_\gamma^{u,s}(x)$ from $U\setminus Z$ to $Z$ by pushing them forward with the dynamics of $g$.  Condition \ref{C1} guarantees that we obtain measurable\footnote{Indeed, continuous everywhere except possibly the boundary of $Z$.} invariant cone families $K^{u,s}$ on all of $U$.  

Let $\AAA$ be the set of $C^{1+\alpha}$ curves $W\subset U\setminus Z$ such that $T_x W \subset K^u(x)$ for all $x\in W$.\footnote{For our purposes, it will not matter whether or not $W$ contains its endpoints, since these carry zero weight under $m_W$.}  We say that $W\in \AAA$ has \emph{H\"older curvature bounded by $L>0$} if the unit tangent vector to $W$ is $(L,\alpha)$-H\"older with respect to the point on the curve.  Fixing $L,\eps>0$, let $\tilde\AAA = \tilde\AAA(L,\eps)$ be the set of curves in $\AAA$ with length between $\eps$ and $2\eps$ and H\"older curvature bounded by  $L$.

Given $W\in \AAA$, we say that an \emph{admissible decomposition} for $W$ is a (possibly infinite) collection of disjoint subcurves $W_j \subset W$ and $\tau_j\in \NN$ such that $W \setminus \bigcup_j W_j$ is $m_W$-null and every $W_j$ satisfies $g^{\tau_j}(W_j) \subset U\setminus Z$.  Given an admissible decomposition, we write $\tau(x) = \tau_j$ for all $x\in W_j$, and $\bar G(x) = g^{\tau(x)}(x)$ for the induced map, so $\bar G(W_j) \subset U\setminus Z$. 

\begin{remark}
If $g(W) \subset U\setminus Z$, then any partition yields an admissible decomposition with $\tau\equiv 1$. When $g(W)$ enters $Z$, the time $\tau_j$ must be taken large enough to allow $\bar G(W_j)$ to escape $Z$.  We stress that $\bar G$ depends on $W$ and on the choice of admissible decomposition, and need not be the first return map $G$.  In our examples, 
$\bar G$ will be either $G$ or $G\circ g$.
\end{remark}

By invariance of $K^u$, we see that $\bar G(W_j)\in \AAA$.  The following condition requires that there be an admissible decomposition for which we control the size and curvature of $\bar G(W_j)$, as well as the expansion of 
$\bar G$ on $W_j$.

\begin{enumerate}[label=\textbf{(C\arabic{*})}]\setcounter{enumi}{1}
\item\label{C2}
There are $L,\eps,Q>0$ and $p\colon \NN\to [0,1]$ such that $\sum_{t\geq 1} t p(t)<\infty$ and every $W\in \tilde\AAA$ with $g(W)\cap Z\neq \emptyset$ has an admissible decomposition satisfying
\begin{enumerate}[label=(\roman{*})]
\item\label{integrable}
$m_W( \{x\in W \mid \tau(x) = t\}) \leq p(t)m_W(W)$ for all $t\in \NN$;
\item\label{largereturns}
$\bar G(W_j)\in \tilde\AAA$ for every $j$; and
\item\label{distortion}
if $x,y\in W_j$ then $\log\frac{|D\bar G(x)|_{T_xW}|}{|D\bar G(y)|_{T_yW}|} \leq Qd(\bar G(x),\bar G(y))^\alpha$.
\end{enumerate}
\end{enumerate}

\begin{remark}
Condition \ref{C2} is analogous to the familiar construction of an inducing scheme or tower; we stress, however, that we do not demand any Markov property.  The role of inducing time is played by $t$, which is such that at time $t$, each $W_j$ returns to uniformly large scale (this is \ref{largereturns}) with bounded distortion (this is \ref{distortion}).  We think of the function $p$ as a ``probability envelope'' that controls the probability of encountering different return times.  The condition $\sum t p(t)<\infty$, together with \ref{integrable}, corresponds to the requirement that inducing time be integrable (expected inducing time is finite).  Condition \ref{C3} below will guarantee that there is a choice of inducing time at which we have uniform hyperbolicity -- see Lemma \ref{lem:local-growth}.
 
In our examples, $g$ is obtained by slowing down $f$ near a fixed point.  In this case there is a natural admissible decomposition such that each $\bar G(W_j)$ has length between $\eps$ and $2\eps$, and so the challenge will be to prove an expansion estimate to verify \ref{integrable}, an estimate on H\"older curvature to verify \ref{largereturns}, and a bounded distortion estimate to verify \ref{distortion}.
\end{remark}

Given $x\in Z$, let $\Delta(x)$ be the defect from domination given by \eqref{eqn:defect}.
We need to control the expansion, contraction, and defect when the trajectory of $x$ passes through $Z$, so that overall expansion, contraction, and defect of a trajectory can be controlled in terms of how often it enters $Z$.  We suppose that
\begin{enumerate}[label=\textbf{(C\arabic{*})}]
\setcounter{enumi}{2}
\item\label{C3}
there is $C>0$ such that given $W$ as in \ref{C2} and $x\in W$, we have
\begin{equation}\label{eqn:bdd-defect}
\sum_{j=k}^{\tau(x)} \lambda^u(g^j(x)) - \Delta(g^j(x)) \geq -C,\qquad
\sum_{j=k}^{\tau(x)} \lambda^s(g^j(x)) \leq C
\end{equation}
for every $0\leq k\leq \tau(x)$.  Moreover, we suppose that every orbit of $f$ leaving $Z$ takes more than $C/\log\chi$ iterates to return to $Z$.
\end{enumerate}

We give examples of systems satisfying the above conditions in the next section.  These conditions let us apply the main results to obtain an SRB measure.

\begin{theorem}\label{thm:perturbed}
Let $g$ be a $C^{1+\alpha}$ perturbation of an Axiom A system, such that $g$ satisfies conditions \ref{C1}--\ref{C3}.  Then $g$ has an SRB measure.
\end{theorem}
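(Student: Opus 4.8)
The plan is to verify the two hypotheses of Theorem~\ref{thm:main2} for the map $g$. Hypothesis~(1) is immediate: by the discussion following Condition~\ref{C1}, pushing the stable and unstable cone families of $f$ forward through $Z$ produces measurable invariant cone families $K^{u,s}$ on all of $U$ satisfying~\ref{H1}. For hypothesis~(2) I would fix a single curve $W\in\tilde\AAA(L,\eps)$, with $L,\eps$ as in Condition~\ref{C2} (the set $\tilde\AAA$ is non-empty since pieces of the local unstable manifolds of $f$ have uniformly bounded curvature). Since $W\in\AAA$ we automatically have $T_xW\subset K^u(x)$ for every $x\in W$, so it is enough to show that $m_W$-almost every $x\in W$ is effectively hyperbolic, i.e.\ satisfies~\ref{eqn:eh1} and~\ref{eqn:eh2}; then Theorem~\ref{thm:main2} yields an SRB measure supported on $\Lambda$.

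To analyze a typical point of $W$ I would iterate Condition~\ref{C2} to build an inducing scheme. Starting from $W$, whenever a curve $W'\in\tilde\AAA$ has $g(W')\cap Z\ne\emptyset$ apply the admissible decomposition of Condition~\ref{C2} (returning pieces to $\tilde\AAA$ after times $\tau_j$ with $m_{W'}(\tau=t)\le p(t)m_{W'}(W')$ and bounded distortion); whenever $g(W')\cap Z=\emptyset$ simply apply $g$, which equals $f$ there, so it expands by at least $\chi$ and does not worsen the curvature bound, and then subdivide $g(W')$ into subcurves of length in $[\eps,2\eps]$, again landing in $\tilde\AAA$. Pulling these decompositions back through the (boundedly distorted) iterates of $g$ gives, for every $n$, a partition $\PPP_n$ of $W$ mod $m_W$-null, together with an increasing sequence of ``good times'' $0=s_0(x)<s_1(x)<\cdots$ at which $g^{s_n(x)}$ carries the atom of $x$ onto a curve in $\tilde\AAA$ with uniform hyperbolicity and bounded distortion; establishing this uniformity is the role of Lemma~\ref{lem:local-growth}, referenced in the remarks above. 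The increments $\rho_i(x)=s_i(x)-s_{i-1}(x)$ obey, conditioned on $\PPP_{i-1}$, a tail bound $m_W(\rho_i=t\mid\cdot)\le D_0\,q(t)$ with $D_0$ the distortion constant and $q(t)=\max(p(t),\one_{\{t=1\}})$, so $\sum_t tq(t)<\infty$ gives a uniformly bounded conditional mean for $\rho_i$; a standard renewal / Borel--Cantelli argument then yields $\limsup_n s_n(x)/n<\infty$ for $m_W$-a.e.\ $x$, hence the set $\{s_n(x):n\ge0\}$ has positive lower asymptotic density.

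It then remains to convert control at the good times into~\ref{eqn:eh1} and~\ref{eqn:eh2}. Away from $Z$ one has $\Delta\equiv 0$, $\lambda^u\ge\log\chi$ and $\lambda^s\le-\log\chi$, so $\lambda\ge\log\chi>0$ there; during an excursion into $Z$, Condition~\ref{C3} bounds $\sum\lambda^u-\Delta$ from below and $\sum\lambda^s$ from above, and the ``more than $C/\log\chi$ iterates to return'' clause guarantees that the hyperbolic gain accrued between excursions absorbs the bounded losses incurred inside $Z$. Because of the simultaneity issue flagged after Condition~\ref{eqn:eh1} (the minimum in the definition of $\lambda$ requires expansion net of $\Delta$ and contraction to be good \emph{at the same times}), I would not verify~\ref{eqn:eh1} directly but instead pass to the effective hyperbolic times of \S\ref{sec:eht}, verifying the weaker Condition~\ref{eqn:eh3}: Lemma~\ref{lem:local-growth} should exhibit a positive-density subset of the good times $s_n(x)$ that are effective hyperbolic times. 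For~\ref{eqn:eh2}: on the compact set $U\setminus Z$ the $f$-cones are continuous, so $\theta(g^nx)$ is bounded below whenever $g^nx\notin Z$; inside $Z$ the angle can collapse, but only at a uniformly controlled rate as the orbit goes deeper (no faster than exponentially in the time since entering $Z$), so $\theta(g^nx)<\bar\theta$ forces the orbit to be at depth at least $D(\bar\theta)$ in $Z$ at time $n$, where $D(\bar\theta)\to\infty$ as $\bar\theta\to0$; since $\sum_t tq(t)<\infty$, the upper density of such ``deep'' times tends to $0$, giving~\ref{eqn:eh2}.

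The main obstacle is the simultaneity problem in the $Z$-excursions: Condition~\ref{C3} controls the cumulative expansion-minus-defect and the cumulative contraction only \emph{separately}, whereas effective hyperbolicity needs a positive density of times at which both are simultaneously good. Overcoming this is precisely the purpose of the effective-hyperbolic-times formalism of \S\ref{sec:eht} and of Lemma~\ref{lem:local-growth}, and getting this bookkeeping right --- propagating bounded distortion along the entire sequence of good times and extracting a positive-density subsequence of effective hyperbolic times from~\eqref{eqn:bdd-defect} --- is the technical heart of the proof; a secondary point is the uniform angle-collapse estimate near the fixed point underlying~\ref{eqn:eh2}.
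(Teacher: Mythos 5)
Your proposal follows essentially the same route as the paper: extend the admissible decompositions of \ref{C2} to all curves in $\tilde\AAA$ with uniform per-excursion hyperbolicity (the paper's Lemma \ref{lem:local-growth}), control the induced return times through conditional tail bounds and an Etemadi-type strong law (Propositions \ref{prop:pre-martingale} and \ref{prop:slln}), then deduce \ref{eqn:eh3} via a Pliss-type argument and \ref{eqn:eh2} by truncating the return times, exactly as in \S\ref{sec:general-conditions}. The only adjustment needed is that, having (correctly) retreated from \ref{eqn:eh1} to the weaker \ref{eqn:eh3} because of the simultaneity issue, your concluding appeal must be to Theorem \ref{thm:main3} rather than Theorem \ref{thm:main2}, which is precisely what the paper does.
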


The proof of Theorem \ref{thm:perturbed} is given in \S\ref{sec:general-conditions} and goes as follows.  Given a small admissible curve 
$W\in \tilde\AAA$, we study the sequence of escape times through $Z$ for a trajectory starting at $x\in W$.  This is a sequence of random variables with respect to $m_W$, and while this sequence is not independent or identically distributed, \ref{C2} lets us control the average value of this sequence.  This in turn gives good bounds on the sum of $\lambda(x)$ along a trajectory, and also controls the frequency with which the angle between stable and unstable cones degenerates.  Ultimately, we will conclude that $m_W$-a.e.\ point 
$x\in W$ satisfies a weak version of effective hyperbolicity, and deduce existence of an SRB measure using a version of Theorem \ref{thm:main2}.\footnote{To get effective hyperbolicity as in \ref{eqn:eh1}, we would need to strengthen condition \ref{C3} and require that $\sum_{j=k}^{\tau(x)} \lambda(g^jx) \geq -C$ for each $k$, which does not automatically follow from \eqref{eqn:bdd-defect}.   To avoid verifying this stronger condition, we will replace \ref{eqn:eh1} with \ref{eqn:eh3} from \S\ref{sec:eht} below and use Theorem \ref{thm:main3} in place of Theorem \ref{thm:main2}; see Lemmas \ref{lem:local-growth} and \ref{lem:Chyptimes}.}

\subsection{Maps on the boundary of Axiom A: neutral fixed points}\label{sec:katok}

We give a specific example of a map for which the conditions of Theorem \ref{thm:perturbed} can be verified.  Let $f\colon U\to M$ be a $C^{1+\alpha}$ Axiom A diffeomorphism onto its image with $\overline{f(U)}\subset U$, where $\alpha\in (0,1)$.  Suppose that $f$ has one-dimensional unstable bundle.

Let $p$ be a fixed point for $f$ (if no such fixed point exists, take a periodic point $p$ and replace $f$ by an iterate that fixes $p$).  We perturb $f$ to obtain a new map $g$ that has an indifferent fixed point at $p$.  The case when $M$ is two-dimensional and $f$ is volume-preserving was studied by Katok~\cite{aK79}.  We allow manifolds of arbitrary dimensions and (potentially) dissipative maps. For example, one can choose $f$ to be the Smale-Williams solenoid or its sufficiently small perturbation. 

For simplicity, we suppose that there exists a neighborhood $Z\ni p$ with local coordinates in which $f$ is the time-1 map of the flow generated by
\begin{equation}\label{eqn:hypflow}
\dot x = Ax
\end{equation}
for some $A\in GL(d,\RR)$.  Assume that the local coordinates identify the splitting $E^u\oplus E^s$ with $\RR \oplus \RR^{d-1}$, so that $A = A_u \oplus A_s$, where $A_u = \gamma\Id_u$ and $A_s = -\beta\Id_s$ for some $\gamma,\beta>0$.  (This assumption of conformality in the stable direction is made primarily for technical convenience and should not be essential.)  Note that in the Katok example we have $d=2$ and $\gamma = \beta$ since the map is area-preserving.  In the more general setting when $\gamma\neq \beta$, many estimates from the original Katok example no longer hold.

Now we use local coordinates on $Z$ and identify $p$ with $0$.  Fix $0<r_0<r_1$ such that $B(0,r_1)\subset Z$, and let $\psi\colon Z\to [0,1]$ be a $C^{1+\alpha}$ function such that
\begin{enumerate}
\item $\psi(x)=\|x\|^\alpha$ for $\|x\|\leq r_0$;
\item $\psi(x)$ is an increasing function of $\|x\|$ for $r_0 \leq \|x\| \leq r_1$;
\item $\psi(x)=1$ for $\|x\|\geq r_1$.
\end{enumerate}
Let $\XX\colon Z\to \RR^d$ be the vector field given by $\XX(x) = \psi(x) Ax$.  Let $g\colon U\to M$ be given by the time-1 map of this vector field on $Z$ and by $f$ on $U\setminus Z$.  Note that $g$ is $C^{1+\alpha}$ because $\XX$ is $C^{1+\alpha}$.
The following is proved in \S\ref{sec:katok-pf}.

\begin{theorem}\label{thm:katok}
The map $g$ satisfies conditions \ref{C1}--\ref{C3}, hence $g$ has an SRB measure by Theorem \ref{thm:perturbed}.
\end{theorem}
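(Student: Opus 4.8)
The goal is to verify conditions \ref{C1}--\ref{C3} for the slowed-down Katok-type map $g$, so that Theorem \ref{thm:perturbed} applies directly. The plan is to work in the local coordinates on $Z$ where $g$ is the time-1 map of $\XX(x) = \psi(x)Ax$ with $A = \gamma\Id_u \oplus (-\beta\Id_s)$, and to exploit the fact that since $\psi$ depends only on $\|x\|$ and $A$ is block-conformal, the flow preserves the splitting $\RR\oplus\RR^{d-1}$: the unstable coordinate $x^u$ and the stable radius $\rho_s = \|x^s\|$ evolve by a planar ODE that can be analyzed fairly explicitly (and the stable sphere direction is simply carried along). This integrability is what makes the verification tractable.

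First I would verify \ref{C1}: since outside $Z$ we have $g=f$ and $f$ is Axiom A with one-dimensional unstable bundle, the $f$-invariant cone families of width $\gamma$ satisfy the required inclusions by uniform hyperbolicity; inside $Z$, because the time-1 flow of $\XX$ preserves the coordinate splitting, the horizontal cone around $E^u$ is mapped strictly inside itself (the $x^u$ coordinate is expanded, each stable coordinate contracted, as long as $\psi>0$), and dually for the stable cone. One must also check compatibility at $\partial Z$, but there $g=f$ and $\psi=1$, so the flow agrees to first order with the linear hyperbolic flow, giving continuity of the cone families up to the boundary. Next, the key structural step: describe the natural admissible decomposition of a curve $W\in\tilde\AAA$ whose image enters $Z$. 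A piece of $W$ that enters $Z$ is a nearly-horizontal curve close to the unstable direction; under the slow flow near $0$ it stretches in $x^u$ while the stable coordinates shrink, and it exits $Z$ after a number of steps $\tau$ that depends on how close to the local stable manifold $\{x^u=0\}$ the piece entered. Partitioning $W\cap g^{-1}(Z)$ according to the exit time $\tau$ gives the subcurves $W_j$; the induced map $\bar G$ is $G$ or $G\circ g$ depending on whether one extra step is needed to reach uniformly large scale.

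The substantive estimates are then: (i) \textbf{the tail bound} $m_W(\{\tau = t\})\le p(t)m_W(W)$ with $\sum t\,p(t)<\infty$. The set of points requiring exit time $\ge t$ is a thin strip around $\{x^u=0\}$ whose width shrinks polynomially in $t$ (this is where $\psi(x)=\|x\|^\alpha$ and the resulting algebraic, rather than exponential, rate near $0$ enters); since $W$ is uniformly transverse to the stable direction and has controlled length/curvature, its $m_W$-measure in this strip is comparable to the strip width, which is $O(t^{-(1+1/\alpha)})$ or similar — summably weighted by $t$ provided the exponent is large enough, which it is for $\alpha\in(0,1)$. (ii) \textbf{curvature control}, \ref{largereturns}: the slow flow near $0$ does not destroy H\"older regularity of the tangent direction — one writes the variational equation along $W$ and shows the H\"older seminorm of the unit tangent stays bounded, using that the nonlinearity of $\XX$ is $C^\alpha$-controlled and the net effect is contracting on the projectivized tangent bundle (the unstable direction is attracting); outside $Z$, the uniform hyperbolicity of $f$ contracts curvature back down. (iii) \textbf{bounded distortion}, \ref{distortion}: a Koopman/chain-rule estimate $\log\frac{|D\bar G(x)|}{|D\bar G(y)|} \le Q\,d(\bar Gx,\bar Gy)^\alpha$ follows by summing $\log\frac{\|Dg(g^j x)|_{T}\|}{\|Dg(g^j y)|_{T}\|}$ over $0\le j\le\tau$, using that consecutive iterates within $Z$ are geometrically contracted toward $\bar Gx$ in backward time (so distances form a summable geometric-type series) and that $\log\|Dg|_T\|$ is $C^\alpha$. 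Finally, \ref{C3}: along a trajectory through $Z$, $\lambda^u - \Delta$ and $\lambda^s$ are controlled because $\psi\in[0,1]$ means the local map is ``between the identity and $f$'' — it never contracts the unstable direction nor expands the stable one, so the partial sums $\sum_{j=k}^{\tau}\lambda^u(g^jx)-\Delta(g^jx)$ and $\sum_{j=k}^{\tau}\lambda^s(g^jx)$ are uniformly bounded by a constant $C$ depending only on $Z$ and $f$ (with $\Delta$ bounded because $\lambda^s\le 0$ and $\lambda^u\ge 0$ throughout $Z$, so the defect is itself controlled); the final clause about orbits of $f$ taking more than $C/\log\chi$ iterates to return to $Z$ is arranged by taking $Z$ small enough.

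The main obstacle I anticipate is the \emph{simultaneous} control of curvature and distortion through the passage near $0$ in the genuinely dissipative case $\gamma\ne\beta$: the paper explicitly warns that ``many estimates from the original Katok example no longer hold,'' so the uniform H\"older-curvature bound \ref{largereturns} and the distortion bound \ref{distortion} cannot simply be imported from \cite{aK79} and must be redone by hand for the anisotropic flow — carefully tracking how the disparity between the expansion rate $\gamma$ (slowed) and the stable contraction rate $\beta$ (slowed by the same scalar factor $\psi$, but acting on a different-dimensional block) interacts with the polynomial slowdown. Getting the exponents in the tail bound (i) right, so that $\sum t\,p(t)<\infty$ genuinely holds for all $\alpha\in(0,1)$ rather than just $\alpha$ close to $1$, is the other delicate point, and is where the precise form $\psi(x)=\|x\|^\alpha$ (matching the H\"older exponent of $Dg$) is essential.
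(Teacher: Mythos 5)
Your outline has the right architecture (decomposition by exit time, a tail bound of order $t^{-(1+\frac1\alpha)}$, curvature and distortion estimates for the induced map), but three of the mechanisms you rely on are not correct, and they are exactly the points where the dissipative case $\beta\neq\gamma$ bites. First, the derivative cocycle does \emph{not} preserve the coordinate splitting: since $\XX(x)=\psi(x)Ax$ with $\psi$ radial, the variational equation contains the cross term $\alpha\langle v,\hat x\rangle A\hat x$ (see \eqref{eqn:LvX}), so ``the $x^u$-coordinate of a tangent vector is expanded and the stable coordinates contracted'' is false; invariance of an unstable cone is not automatic and in fact only holds for a cone whose width is tied to $\alpha$ (the condition $\tan\rho\le\alpha/2$ in Lemma \ref{lem:tanrho}), precisely because the cross term has size $\alpha\lambda\|x\|^\alpha\tan\theta/(1+\tan^2\theta)$. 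Second, and more seriously, your verification of \ref{C3} rests on the claim that inside $Z$ one has $\lambda^u\geq0$ and $\lambda^s\leq0$ pointwise (``the local map never contracts the unstable direction nor expands the stable one''). That is true only in the conservative case $\beta=\gamma$; when $\beta\neq\gamma$ vectors in $K^u$ can genuinely contract (Remark \ref{rmk:harder}), so the bounded partial sums in \eqref{eqn:bdd-defect} cannot be obtained for free. The actual proof obtains them from the uniform bound $\int\|x\|^\alpha\tan\rho\,dt\le Q_0$ of Corollary \ref{cor:tanbdd}, which requires the refined two-regime estimate on $\tan\rho$ (Lemma \ref{lem:tanrho2}); this is the technical heart of the verification and is entirely missing from your argument.

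Third, your distortion argument assumes that backward orbits inside $Z$ are ``geometrically contracted'' toward $\bar G x$, so that the H\"older sum is a geometric series. Through the neutral region the contraction is only polynomial, $d(x(t),y(t))\lesssim (T-t)^{-(1+\frac1\alpha)}d(\bar x,\bar y)$ as in \eqref{eqn:dxy}, and the direction difference satisfies only $\|\Delta\hat v(t)\|\lesssim (T-t)^{-1}d(\bar x,\bar y)^\alpha$ by \eqref{eqn:Deltavt}; a naive term-by-term bound $|\Delta\log|Dg||\lesssim d^\alpha+\|\Delta\hat v\|$ then produces a $\log T$ divergence. The bound \eqref{eqn:bdd-distortion} is saved only because near the fixed point the logarithmic derivative is damped by the factor $\|x\|^\alpha$: the difference of the expansion rates is controlled by $\|x\|^\alpha\|\Delta v\|$ plus terms involving $\Delta(\|x\|^\alpha)$ and $\|\Delta\hat x\|$ (see \eqref{eqn:big-Delta} and Lemma \ref{lem:Deltanorms}), and one must split the passage into the regimes $[0,\kappa T]$ and $[\kappa T,T]$ using the anisotropic norm estimates of Proposition \ref{prop:chevron} to make the resulting integral converge. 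So while your sketch correctly anticipates \emph{where} the difficulty lies, the arguments you propose for \ref{C1}, \ref{C3}, and \ref{C2}\ref{distortion} would fail as written in the case $\beta\neq\gamma$, and the missing ingredient in each case is the quantitative control of the angle $\rho(t)$ and of $\|x(t)\|^\alpha$ along a passage, which the paper develops in Lemmas \ref{lem:tantheta}--\ref{lem:tanrho2}, Corollary \ref{cor:tanbdd}, and Proposition \ref{prop:chevron}.
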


\begin{remark}
Note that $g$ does not have a dominated splitting because of the indifferent fixed point, and hence this example is not covered by \cite{ABV00}.
We also observe that if $\psi$ is taken to be $C^\infty$ away from $0$, then $g$ is also $C^\infty$ away from the point $p$.  
The condition 
$\psi(x)=\|x\|^\alpha$ near $0$ for $\alpha<1$ takes the place of the condition in \cite{aK79} that $1/|\psi|$ be integrable, which ensured existence of a finite absolutely continuous invariant measure for the map $g$ in the case when $f$ is area preserving.
The verification of conditions \ref{C1}--\ref{C3} requires similar bounds as those proved for the original Katok map, but the computations are made more difficult by the fact that $\beta\neq\gamma$.  Moreover, in our case the attractor can intersect each stable manifold in a Cantor set, which differs from the behavior of the Katok map.
\end{remark}

\section{Overview of proofs of Theorems \ref{thm:main} and \ref{thm:main2}}\label{sec:overview}

\subsection{Description of geometric approach for uniformly hyperbolic attractors}\label{sec:uh-construction}

To motivate the approach that we will use later on, we first 
consider the case when $\Lambda$ is a uniformly hyperbolic attractor for $f$.  In this case,
the cones $K^u(x)$ and $K^s(x)$ are defined at every $x\in U$ and are continuous.
Let $W\subset U$ be an \emph{admissible manifold}; that is, a $d_u$-dimensional submanifold that is tangent to an unstable cone $K^u(x)$ at some point $x\in U$ and has a fixed size and uniformly bounded curvature.  
Consider leaf volume $m_W$ on $W$ and take the pushforwards $f_*^n m_W$ given by
\begin{equation}\label{eqn:pushLeb}
(f_*^nm_W)(E) = m_W(f^{-n}(E)).
\end{equation}
To obtain an invariant measure, we take C\'esaro averages:
\begin{equation}\label{eqn:mun-mW}
\mu_n := \frac 1n \sum_{k=0}^{n-1} f_*^k m_W.
\end{equation}
By weak* compactness there is a subsequence $\mu_{n_k}$ that converges to an invariant measure $\mu$ on $\Lambda$.  It is a classical result that $\mu$ is an SRB measure, and this can be proved in various ways.  We present an argument that can be adapted to our setting of effective hyperbolicity.

Consider the images $f^n(W)$ and observe that for each $n$, the measure $f_*^n m_W$ is absolutely continuous with respect to leaf volume on $f^n(W)$.  For every $n$, the image $f^n(W)$ can be covered with uniformly bounded multiplicity\footnote{This requires a version of the Besicovitch covering lemma; see Lemma \ref{lem:besicovitch}.} by a finite number of admissible manifolds $W_i$, so that 
\begin{equation}\label{eqn:cvx-comb}
f_*^n m_W \text{ is a convex combination of measures } \rho_i\, dm_{W_i},
\end{equation}
where $\rho_i$ are H\"older continuous positive densities on $W_i$.  We refer to each $(W_i,\rho_i)$ as a \emph{standard pair}; this idea of working with pairs of admissible manifolds and densities was introduced by Chernov and Dolgopyat in \cite{CD09} and is an important recent development in the study of SRB measures via geometric techniques.  

To proceed in a more formal way, fix constants $\gamma,\kappa,r>0$, and define a \emph{$(\gamma,\kappa)$-admissible manifold of size $r$} to be $V(x) := \exp_x \graph \psi$, where $\psi\colon B_{E^u(x)}(0,r) := B(0,r) \cap E^u(x) \to E^s(x)$ is $C^{1+\alpha}$ and satisfies
\begin{equation}\label{eqn:gkadm1}
\begin{aligned}
\psi(0) &= 0 \text{ and } D\psi(0) = 0,\\
\|D\psi\| &:= \sup_{\|v\| < r} \|D\psi(v)\| \leq \gamma, \\
|D\psi|_\alpha &:= \sup_{\|v_1\|,\|v_2\| < r} \frac{\|D\psi(v_1) - D\psi(v_2)\|}{\|v_1-v_2\|^\alpha} \leq \kappa.
\end{aligned}
\end{equation}

\begin{remark}
Our definition of admissible manifold is reminiscent of the notion of admissible manifolds in~\cite{BP07} and also of \emph{manifolds tangent to a cone field} used in~\cite{ABV00}.  There are several differences between those definitions and this one:  most importantly, in~\eqref{eqn:gkadm1} we require control not just of $\|D\psi\|$, but also of the H\"older constant of $D\psi$, so that we can bound the (H\"older) curvature of $V(x)$.  Furthermore, unlike~\cite{BP07}, we do not use Lyapunov coordinates, but rather work in the original Riemannian metric, and unlike~\cite{ABV00}, we look at the image of the manifold in a single tangent space $T_x M$, rather than in all the tangent spaces $T_y M$ for $y\in V(x)$.  While this is not important in the uniformly hyperbolic setting, where $K^u(x)$ is continuous in $x$, it will become crucial later on, when the cones $K^{u,s}$ are not even necessarily defined in all of the tangent spaces along $V(x)$.
\end{remark}

Now fix $L>0$ and write $\KK = (\gamma,\kappa,r,L)$ for convenience.  Then the space of admissible manifolds
\begin{multline*} 
\RRR_\KK := \{ \exp_x(\graph \psi) \mid x\in U, \psi \in  B_{E^u(x)}(r) \to E^s(x) \text{ satisfies } \eqref{eqn:gkadm1} \}
\end{multline*}
and the space of standard pairs
\[
\RRR'_\KK := \{ (W,\rho) \mid W\in \RRR_\KK, \rho \in C^\alpha(W,[\tfrac 1L,L]), |\rho|_\alpha \leq L \}
\]
can be shown to be compact in the natural product topology (see the next section for a more detailed description of this topology).
A standard pair determines a measure $\Psi(W,\rho)$ on $\overline{U}$ in the obvious way:
\begin{equation}\label{eqn:Phi1}
\Psi(W,\rho)(E) := \int_{E\cap W} \rho \,dm_W.
\end{equation}
Moreover, each measure $\eta$ on  $\RRR'_\KK$  determines a measure $\Phi(\eta)$ on $\overline{U}$ by
\begin{equation}\label{eqn:Phi2}
\begin{aligned}
\Phi(\eta)(E) &:= \int_{\RRR'_\KK} \Psi(W,\rho)(E)\,d\eta(W,\rho) \\
&= \int_{\RRR'_\KK} \int_{E\cap W} \rho(x) \,d m_W(x) \,d\eta(W,\rho).
\end{aligned}
\end{equation}
Write $\MMM(\overline{U})$ and $\MMM(\RRR'_\KK)$ for the spaces of finite Borel measures on $\overline{U}$ and $\RRR'_\KK$, respectively.  It is not hard to show that $\Phi\colon \MMM(\RRR'_\KK)\to \MMM(\overline{U})$ is continuous; in particular, $\MMM_\KK:= \Phi(\MMM_{\leq 1}(\RRR_\KK'))$ is compact, where we write $\MMM_{\leq 1}$ for the space of measures with total weight at most 1.

On a uniformly hyperbolic attractor, an invariant probability measure is an SRB measure if and only if it is in $\MMM_\KK$ for some $\KK$.   We see from \eqref{eqn:cvx-comb} that $\MMM_\KK$ is invariant under the action of $f_*$, and thus $\mu_n \in \MMM_\KK$ for every $n$.  By compactness of $\MMM_\KK$ one can pass to a convergent subsequence $\mu_{n_k} \to \mu\in \MMM_\KK$, and this is the desired SRB measure.


\subsection{Constructing SRB measures with effective hyperbolicity}\label{sec:srb-eh}

Now we move to the setting of Theorems \ref{thm:main} and \ref{thm:main2}, so we assume that $A\subset U$ is a forward-invariant set such that \ref{H1} holds.  We will see in \S\ref{sec:reduction} that the hypotheses of Theorem \ref{thm:main} imply the hypotheses of Theorem \ref{thm:main2}, so here we consider a $d_u$-dimensional manifold $W\subset U$ for which $m_W(S)>0$, where we write $S$ for the set of effectively hyperbolic points $x\in W \cap A$  with the property that $T_x W \subset K^u(x)$.  In this setting, there are two major obstacles to overcome.
\begin{enumerate}
\item The action of $f$ along admissible manifolds is not necessarily uniformly expanding.
\item Given $n\in \NN$ it is no longer necessarily the case that $f^n(W)$ contains any admissible manifolds in $\RRR_\KK$, let alone that it can be covered by them.  When $f^n(W)$ contains some admissible manifolds, we will need to control how much of  it  can be covered.
\end{enumerate}
To address the first of these obstacles, we need to consider admissible manifolds for which we control not only the geometry but also the dynamics; thus we will replace the collection $\RRR_\KK$ from the previous section with a more carefully defined set (in particular, $\KK$ will include more parameters).  Since we do not have uniformly transverse invariant subspaces $E^{u,s}$, our definition of an admissible manifold also needs to specify which subspaces are used, and the geometric control requires an assumption about the angle between them.  

Given $\theta,\gamma,\kappa,r>0$, write $\II = (\theta,\gamma,\kappa,r)$ and consider the following set of ``$(\gamma,\kappa)$-admissible manifolds of size $r$ with transversals controlled by $\theta$'':
\begin{multline}\label{eqn:PPP}
\PPP_\II = \{ \exp_x(\graph \psi) \mid x\in A,\ T_x M = G \oplus F,\
G \subset \overline{K^u(x)}, 
\\ F\subset \overline{K^s(x)}, \measuredangle(G,F) \geq \theta,\  
\psi\in C^{1+\alpha}(B_G(r), F) \text{ satisfies~\eqref{eqn:gkadm1}}\}.
\end{multline}
Elements of $\PPP_\II$ are admissible manifolds with controlled geometry.  We also impose a condition on the dynamics of these manifolds.  Fixing $C, \bl>0$, write $\JJ = (C,\bl)$ and consider for each $N\in \NN$ the collection of sets
\begin{multline}\label{eqn:QQQ}
\QQQ_{\JJ,N} = \{f^N(V_0) \mid V_0 \subset U, \text{ and for every } y,z \in V_0, \text{ we have} \\
d(f^j(y), f^j(z)) \leq Ce^{-\bl(N- j)} d(f^N(y),f^N(z)) \text{ for all } 0\leq j\leq N\}.
\end{multline}
Elements of $\PPP_\II \cap \QQQ_{\JJ,N}$ are admissible manifolds with controlled geometry and dynamics in the unstable direction.  When we give the details of the proof, we will also introduce a parameter $\beta>0$ that controls the dynamics in the stable direction, and another parameter $L>0$ that controls densities in standard pairs (as before).  Then writing $\KK = \II \cup \JJ \cup \{\beta,L\}$, we will define in \eqref{eqn:RRR} a set $\RRR_{\KK,N} \subset \PPP_\II \cap \QQQ_{\JJ,N}$ for which we have the added restriction that we control the dynamics in the stable direction; the corresponding set of standard pairs will be written $\RRR_{\KK,N}'$.  

The set $\RRR'_{\KK,N}$ carries a natural product topology; an element of $\RRR'_{\KK,N}$ is specified by a quintuple $(x,G,F,\psi,\rho)$, and a small neighborhood $\Omega \ni x$  can be identified with $\RR^n$ via the exponential map.  Then the second coordinate can be identified with the set of all $k$-dimensional subspaces of 
$\RR^n$, the third with all $(n-k)$-dimensional subspaces, the fourth with $C^1$ functions $B_{\RR^k}(r)\to \RR^{n-k}$, and the fifth with $C^0$ functions $B_{\RR^k}(r)\to [\frac 1L,L]$.  This specifies a natural topology on each coordinate: the Grassmanian topology on the subspaces $G$ and $F$, and the $C^1$ and $C^0$ topologies on the functions $\psi$ and $\rho$, respectively.  Thus we may define a topology on $\RRR'_{\KK,N}$ as the product topology over each such Euclidean neighborhood in $U$.  In Proposition \ref{prop:R-cpt}, we prove that $\RRR_{\KK,N}'$ is compact in this topology and that the map $\Phi$ defined in \eqref{eqn:Phi2} is continuous.

As before, let $\MMM_{\leq 1}(\RRR'_{\KK,N})$ denote the space of measures on $\RRR'_{\KK,N}$ with total weight at most 1.  The resulting measures on $U$ will play a central role in our proof:
\begin{equation}\label{eqn:Mach}
\MMM_{\KK,N} = 
\Phi(\MMM_{\leq 1}(\RRR'_{\KK,N})).
\end{equation}
One should think of $\MMM_{\KK,N} \subset \MMM(\overline{U})$ as an analogue of the regular level sets that appear in Pesin theory.  Measures in $\MMM_{\KK,N}$ have uniformly controlled geometry, dynamics, and densities via the parameters in $\KK$, and Proposition \ref{prop:R-cpt} gives compactness of $\MMM_{\KK,N}$. However, at this point we encounter the second obstacle mentioned above: because $f(W)$ may not be covered by admissible manifolds in $\RRR_{\KK,N}$, the set $\MMM_{\KK,N}$ is not $f_*$-invariant.  

Thus we must establish good recurrence properties to $\MMM_{\KK,N}$ under the action of $f_*$ on $\MMM(\overline{U})$; this will be done via effective hyperbolicity.
Consider for $x\in A$ and $\bl>0$ the set of \emph{effective hyperbolic times}
\begin{equation}\label{eqn:ehtimes}
\Gamma^e_\bl(x) = \bigg\{n \mid \sum_{j=k}^{n-1} (\lambda^u - \Delta)(f^jx) \geq \bl(n-k) \text{ for all }0\leq k<n\bigg\}.
\end{equation}
Any effective hyperbolic time is a hyperbolic time as well, but not every hyperbolic time is effective.
In \S\ref{sec:eht} we use results from \cite{CP16} to show that the set $\Gamma^e_\bl(x)$ has positive lower asymptotic density for a positive volume set of $x$, and that for almost every effective hyperbolic time $n\in \Gamma^e_\bl(x)$, there is a neighborhood $W_n^x\subset W$ containing $x$ such that $f^n(W_n^x) \in \PPP_\II \cap \QQQ_{\JJ,N}$.  With a little more work (see Lemma \ref{lem:eht2} and Proposition \ref{prop:unif-large}), we will produce a `uniformly large' set of points $x$ and times $n$ such that $f^n(W_n^x) \in \RRR_{\KK,N}$, and in fact $f_*^n m_{W_n^x} \in \MMM_{\KK,N}$. 
We use this to obtain measures $\nu_n \in \MMM_{\KK,N}$ such that 
\begin{equation}\label{eqn:nun-ulim}
\nu_n \leq \mu_n = \tfrac 1n \textstyle\sum_{k=0}^{n-1} f_*^k m_W
\qquad\text{and}\qquad
\ulim_{n\to\infty} \|\nu_n\| > 0.
\end{equation}
Once this is achieved, we can use compactness of $\MMM_{\KK,N}$ to conclude that there is a non-trivial $\nu \in \bigcap_N \MMM_{\KK,N}$ such that $\nu \leq \mu = \lim_k \mu_{n_k}$.  In order to apply the absolute continuity properties of $\nu$ to the measure $\mu$, we define in \eqref{eqn:NR}--\eqref{eqn:Mac} a collection $\Mac$ of measures with good absolute continuity properties along admissible manifolds, for which we can prove a version 
of the Lebesgue decomposition theorem (Proposition \ref{prop:decomposition}) that gives $\mu = \mu^{(1)} + \mu^{(2)}$, where $\mu^{(1)}\in \Mac$ is invariant.    This measure is non-trivial since $0\neq \nu \leq \mu^{(1)}$, and the definition of $\RRR'_{\KK,N}$ will guarantee that the set of points with non-zero Lyapunov exponents has positive measure with respect to $\nu$, and hence also with respect to $\mu^{(1)}$.  Thus some ergodic component of $\mu^{(1)}$ is hyperbolic, and hence is an SRB measure.

\section{Proof of Theorems \ref{thm:main} and \ref{thm:main2}}\label{sec:main-pfs}

In this section we prove our main results, modulo some technical lemmas whose proofs we defer to \S\ref{sec:pf} so as not to disrupt the exposition here.  

\subsection{Reduction to a density condition}

We start by observing that Theorem \ref{thm:main} is a consequence of Theorem \ref{thm:main2}: the following is proved in \S\ref{sec:reduction}.

\begin{proposition}\label{prop:reduction}
Let $f$ be a $C^{1+\alpha}$ diffeomorphism of a compact manifold $M$, and $\Lambda$ a topological attractor for $f$.  Assume that $f$ admits measurable invariant cone families as in \ref{H1}, and that the set $S$ of effectively hyperbolic points satisfies $\Leb S > 0$.  Then there is a $d_u$-dimensional embedded submanifold $W\subset U$ such that $m_W(\{x\in S\cap W \mid T_xW \subset K^u(x)\}) > 0$.
\end{proposition}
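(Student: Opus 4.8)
The plan is to slice the positive-volume set $S\subset A$ by a generic family of $d_u$-dimensional submanifolds and use a Fubini-type argument to extract a single slice on which the conditional volume of $S$ is positive.  Concretely, I would proceed as follows.

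First, fix a point $x_0$ which is a Lebesgue density point of $S$, and work in a small Euclidean chart $\Omega$ around $x_0$ identified with a neighborhood of the origin in $\RR^d=\RR^{d_u}\oplus\RR^{d_s}$, small enough that $K^u$ and $E^u$ vary little and the chart barely distorts volume.  The subtlety is that the ``unstable'' submanifolds we want must satisfy $T_xW\subset K^u(x)$ at the relevant points, and the cone family is only \emph{measurable}, not continuous, so I cannot simply foliate by literal integral submanifolds of $K^u$.  The key observation is that we only need $T_xW\subset K^u(x)$ at points of $S\cap W$, and we get to choose which manifolds to use after knowing where $S$ sits.  So I would instead fix a single $d_u$-dimensional subspace $G_0\subset\RR^d$ with the property that $\Leb(\{x\in S\cap\Omega \mid \measuredangle(G_0,E^u(x))<\tilde\theta_u(x)/2\})>0$; such a $G_0$ exists because, by Lusin's theorem, after discarding a small-volume subset of $S$ the functions $x\mapsto E^u(x)$ and $x\mapsto\tilde\theta_u(x)$ are continuous on a compact set $S'\subset S\cap\Omega$ of nearly full volume, and then one can cover $S'$ by finitely many pieces on each of which $E^u$ is nearly constant and the angle is bounded below; at least one such piece $S''$ has positive volume, and on $S''$ every subspace within $\tilde\theta_u/2$ of the (nearly constant) value of $E^u$ lies in $K^u(x)$.

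Second, having fixed such a $G_0$ (and writing $F_0$ for a complementary subspace transverse to all the relevant $E^s(x)$), I foliate $\Omega$ by the affine translates $W_t := (t+G_0)\cap\Omega$ for $t\in F_0$.  Each $W_t$ is a flat $d_u$-dimensional embedded submanifold, and for $x\in W_t\cap S''$ we have $T_xW_t=G_0\subset K^u(x)$ by construction.  By Fubini's theorem applied to the (positive) volume of $S''$ decomposed along this affine foliation,
\begin{equation*}
0<\Leb(S'')=\int_{F_0} m_{W_t}(S''\cap W_t)\,dt,
\end{equation*}
so there is a positive-measure set of $t$, in particular \emph{some} $t$, for which $m_{W_t}(S''\cap W_t)>0$.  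Since $S''\subset S\cap W_t$ and $T_xW_t\subset K^u(x)$ for all $x\in S''\cap W_t$, the manifold $W=W_t$ satisfies $m_W(\{x\in S\cap W\mid T_xW\subset K^u(x)\})\geq m_{W_t}(S''\cap W_t)>0$, which is exactly the conclusion.  (One should note that $W_t\subset U$ after shrinking $\Omega$, and that effective hyperbolicity of points is intrinsic to the orbit, so membership in $S$ is unaffected by the choice of chart.)

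\textbf{Main obstacle.}  The only real point requiring care is the non-continuity of the cone family: one cannot take $W$ tangent to $K^u$ everywhere, so the argument must be arranged so that the tangency requirement $T_xW\subset K^u(x)$ is imposed only on the set $S\cap W$, and the Lusin/pigeonhole step producing a positive-volume piece $S''$ on which a \emph{fixed} subspace $G_0$ works is what makes this possible.  Everything else is a routine density-point plus Fubini argument, together with the elementary remark that leaf volume on a nearly-flat chart slice is comparable to the Fubini disintegration of Lebesgue measure.
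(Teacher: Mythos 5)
Your proposal is correct and follows essentially the same route as the paper: fix a single $d_u$-dimensional direction that stays inside $K^u$ on a positive-volume subset of $S$ (you via Lusin and near-constancy of $E^u$ with the cone angle bounded below, the paper via a finite $\bar\theta/2$-dense net in the Grassmannian of $T_zM$ after first restricting to $\theta(x)\geq\bar\theta$), then foliate a chart neighborhood by parallel copies of that subspace and use Fubini/disintegration of Lebesgue measure to find one leaf $W$ with $m_W(\{x\in S\cap W\mid T_xW\subset K^u(x)\})>0$. The only detail worth making explicit is that the exponential/chart identification distorts directions by less than the angular margin you reserved, which the paper quantifies via the estimate $\|g_{z,x}\|_{C^2}\leq\bar\theta/2$.
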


Theorem \ref{thm:main2} will in turn follow from Theorem \ref{thm:main3} below, which is slightly more technical to state but will prove useful in our applications.


Given 
$C,\bl>0$ and $q\in\NN$, write $\JJ=(C,\bl)$ as before and let
\begin{multline}\label{eqn:shtimes}
\Gamma^s_{\JJ,q}(x) = \big\{ n\in \NN \mid
\|Df^{-k}(f^nx)(v)\|\geq Ce^{\bl k}\|v\|\\
 \text{ for all } 0\leq k\leq q \text{ and } v\in K^s(f^nx)\big\}.
\end{multline}
This is similar to the condition in \eqref{eqn:ehtimes} on the dynamics in the unstable direction, but only requires control of the dynamics for iterates in $[n-q,n]$ instead of all iterates in $[0,n]$.  Pliss' lemma \cite[Lemma 11.2.6]{BP07} shows that if $x$ satisfies
\begin{equation}\label{eqn:growth}
\llim_{n\to\infty} \frac 1n \sum_{k=0}^{n-1} \lambda(f^k x) > \chi > \bl > 0,
\end{equation}
then for every $q\in \NN$, we have
$
\ld\left( \Gamma^e_\bl(x) \cap \Gamma^s_{(1,\bl),q}(x)\right) \geq \frac{\bar\lambda - \chi}{L - \chi},
$
where $L = \sup_x \lambda(x)$, the set $\Gamma^e_\bl(x)$ is defined in \eqref{eqn:ehtimes}, and $\Gamma^s_{(1,\bl),q}(x)$ is given by  \eqref{eqn:shtimes} with $C=1$.  In particular, every effectively hyperbolic $x$ has the property that there is $\JJ = (C,\bl)$ such that
\begin{enumerate}[label=\textup{\textbf{(EH\arabic{*}$'$)}}]
\setcounter{enumi}{0}
\item\label{eqn:eh3} $\lim_{q\to\infty} \ld\left(\Gamma^e_\bl(x) \cap \Gamma^s_{\JJ,q}(x)\right) > 0$.
\end{enumerate}
Consider the set $\hat S = \{x\in A \mid \text{\ref{eqn:eh3} and \ref{eqn:eh2} hold} $ for some $C,\bl>0\}$.  Then $S\subset \hat S$: every effectively hyperbolic point is contained in $\hat S$.  On the other hand, \ref{eqn:eh3} is weaker than \ref{eqn:eh1}, so $\hat S$ may be strictly larger than $S$.
We devote the rest of \S\ref{sec:main-pfs} to proving the following result.

\begin{thm}\label{thm:main3}
Let $f$ be a $C^{1+\alpha}$ diffeomorphism of a compact manifold $M$, and $\Lambda$ a topological attractor for $f$.  Assume that 
\begin{enumerate}
\item $f$ admits measurable invariant cone families as in \ref{H1}, and
\item there is a $d_u$-dimensional embedded submanifold $W\subset U$ such that $m_W( \{x\in \hat S\cap W \mid T_xW \subset K^u(x)\}) > 0$.
\end{enumerate}
Then $f$ has an SRB measure supported on 
$\Lambda$.
\end{thm}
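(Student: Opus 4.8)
The plan is to follow exactly the geometric strategy outlined in the overview (\S\ref{sec:srb-eh}), with condition \ref{eqn:eh3} replacing \ref{eqn:eh1} at the one place where the latter was used, namely the Pliss-type estimate guaranteeing a positive density of good times. Fix the submanifold $W$ from hypothesis (2), and let $S_W = \{x\in \hat S\cap W \mid T_xW\subset K^u(x)\}$, which has $m_W(S_W)>0$. Since $\hat S = \bigcup_{C,\bl>0}\{x : \text{\ref{eqn:eh3} and \ref{eqn:eh2} hold for }C,\bl\}$, a countable exhaustion argument lets us pass to a positive-measure subset of $S_W$ on which a \emph{single} pair $\JJ=(C,\bl)$ works, and on which the liminf density in \ref{eqn:eh3} and the decay rate in \ref{eqn:eh2} are uniformly bounded below; restricting to a further positive-measure subset we may also take $W$ to have length between $\eps$ and $2\eps$ with H\"older curvature bounded by $L$ (i.e.\ $W\in\tilde\AAA$ in the relevant coordinates), at the cost of shrinking $W$. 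This is the analogue of passing to a regular set in Pesin theory.

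Next I would invoke the machinery of \cite{CP16}, as described around \eqref{eqn:ehtimes}, to convert the asymptotic conditions into concrete geometric statements about the images $f^n(W)$. Precisely: for $m_W$-a.e.\ $x\in S_W$ and for every effective hyperbolic time $n\in\Gamma^e_\bl(x)$ (which for $x\in S_W$ have positive lower density by \ref{eqn:eh3}, since $\Gamma^e_\bl(x)\cap\Gamma^s_{\JJ,q}(x)\subset\Gamma^e_\bl(x)$), there is a neighborhood $W_n^x\subset W$ of $x$ with $f^n(W_n^x)\in\PPP_\II\cap\QQQ_{\JJ,N}$ for appropriate parameters $\II$, and moreover $\Gamma^s_{\JJ,q}(x)$ controls the dynamics in the stable direction on a window of length $q$ preceding $n$. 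Combining the unstable control from $\Gamma^e_\bl$ with the stable control from $\Gamma^s_{\JJ,q}$ (this is where \ref{eqn:eh3}, rather than just \ref{eqn:eh1}, enters, and where one uses the separation condition \ref{eqn:eh2} to keep the angle $\theta(f^nx)$ bounded below along a positive-density set of times), one gets $f^n(W_n^x)\in\RRR_{\KK,N}$ and $f^n_*m_{W_n^x}\in\MMM_{\KK,N}$ for $\KK=\II\cup\JJ\cup\{\beta,L\}$. This step is carried out via Lemmas~\ref{lem:eht2}, \ref{lem:Chyptimes} and Proposition~\ref{prop:unif-large}; the crux is a covering/Besicovitch argument showing that the neighborhoods $W_n^x$ capture a \emph{uniformly large} fraction of $m_W$-measure for a positive-density set of $n$, yielding measures $\nu_n\in\MMM_{\KK,N}$ with $\nu_n\le\mu_n$ and $\ulim_n\|\nu_n\|>0$ as in \eqref{eqn:nun-ulim}.

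From here the argument is soft. Compactness of $\MMM_{\KK,N}$ (Proposition~\ref{prop:R-cpt}) lets us pass to the subsequential limit along which $\mu_{n_k}\to\mu$ and extract $\nu\in\bigcap_N\MMM_{\KK,N}$ with $0\neq\nu\le\mu$; refining the subsequence in $N$ via a diagonal argument handles the intersection over $N$. Then the Lebesgue-type decomposition $\mu=\mu^{(1)}+\mu^{(2)}$ of Proposition~\ref{prop:decomposition} isolates the part $\mu^{(1)}\in\Mac$ that is $f$-invariant and absolutely continuous on admissible manifolds; since $0\neq\nu\le\mu^{(1)}$ and the definition of $\RRR'_{\KK,N}$ (through $\JJ$) forces positive Lyapunov exponents on a $\nu$-positive, hence $\mu^{(1)}$-positive, set of points, some ergodic component of $\mu^{(1)}$ is hyperbolic with absolutely continuous conditionals, i.e.\ an SRB measure supported on $\Lambda=\bigcap_{n\ge0}f^n(U)$ (the support statement is automatic since all the $f^n(W_n^x)$ lie arbitrarily deep inside the trapping region). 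I expect the main obstacle to be the uniform-largeness estimate of Proposition~\ref{prop:unif-large}: one must show that, after discarding points where \ref{eqn:eh2} fails and where the densities or curvatures are not yet controlled, the fraction of $W$ that has been pushed into $\RRR_{\KK,N}$ by time $n$ does not decay to zero along a positive-density subsequence of $n$ — this is where the quantitative interplay between the densities in \ref{eqn:eh3}, the angle-decay rate in \ref{eqn:eh2}, and the distortion estimates has to be balanced, and it is the step that genuinely uses effective hyperbolicity rather than just non-uniform hyperbolicity.
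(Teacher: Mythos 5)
Your proposal follows essentially the same route as the paper's proof: the Hadamard--Perron theorem of \cite{CP16} applied at effective hyperbolic times to produce admissible images $f^n(W_n^x)$, the standard-pair/Besicovitch covering argument giving uniformly large projections onto $\MMM_{\KK,q_n}$ (Lemmas \ref{lem:real-eht}, \ref{lem:eht2}, \ref{lem:besicovitch} and Proposition \ref{prop:unif-large}), and then compactness, the decomposition $\mu=\mu^{(1)}+\mu^{(2)}$ with $\mu^{(1)}\in\Mac$, and hyperbolicity of an ergodic component exactly as in Theorem \ref{thm:buildSRB}. The only slips are cosmetic: the admissible images are obtained at times in the smaller set $\Gamma^{e'}_{\bl}(x)$, where the angle cutoff coming from \ref{eqn:eh2} is built into $\lambda^e$ and the density is recovered via \cite[Proposition 9.3]{CP16}, not at every $n\in\Gamma^e_{\bl}(x)$; and Lemma \ref{lem:Chyptimes} belongs to the application in \S\ref{sec:general-conditions} rather than to this argument.
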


Because $S\subset \hat S$, Theorem \ref{thm:main3} implies Theorem \ref{thm:main2}.  
When we apply our main results to the applications described in \S\ref{sec:app}, we will find it easier to check the weaker condition \ref{eqn:eh3} rather than the more restrictive \ref{eqn:eh1}.

\subsection{A Hadamard--Perron theorem for effective hyperbolic times}\label{sec:eht}

Our first major step in the proof of Theorem \ref{thm:main3} is a version of the Hadamard--Perron theorem that works at effective hyperbolic times; this was proved in \cite{CP16}.  Here we give a statement of this result that is adapted to the notation and terminology of Theorem \ref{thm:main3}, and includes an elementary integration bound that follows from the assumption on $m_W(S)$.  This lemma is proved in \S\ref{sec:real-eht}, where we recall the  precise statement of the result from \cite{CP16}.


\begin{lemma}\label{lem:real-eht}
Under the conditions of Theorem \ref{thm:main3}, there are  $\delta,\bar\theta,\bar\gamma,\bar\kappa,\bar{r}>0$  such that writing $\II = (\bar\theta,\bar\gamma,\bar\kappa,\bar{r})$ and letting $\JJ$ be as in \ref{eqn:eh3}, the following is true.  For every $x\in A$ there is $\Gamma_{\bl}^{e'}(x) \subset \Gamma_{\bl}^e(x)$ such that writing $\Gamma_q^\JJ(x) := \Gamma_{\bl}^{e'}(x) \cap \Gamma_{\JJ,q}^s(x)$, we have
\begin{enumerate}
\item given $x\in W$ and $n\in \Gamma_{\bl}^{e'}(x)$, there is 
$
W_n^x \subset W \cap B(x,\bar{r}e^{-\bar\lambda n})
$
with
\begin{equation}\label{eqn:eh-admissible}
f^nW_n^x \in \PPP_{\II}
\cap \QQQ_{\JJ,n};
\end{equation}
\item for every $q\in \NN$ we have
\begin{equation}\label{eqn:often-Gamma}
\llim_{n\to\infty}
\int_{W} \frac 1n \#([0,n) 
\cap \Gamma_q^\JJ(x)) \,dm_W(x) > \delta.
\end{equation}
\end{enumerate}
\end{lemma}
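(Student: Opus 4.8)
The plan is to deduce both statements from the Hadamard--Perron theorem of \cite{CP16} together with the density hypothesis on $m_W(\hat S)$. First I would invoke the version of the Hadamard--Perron theorem from \cite{CP16}: since every $x$ in the relevant subset of $W$ satisfies $T_xW\subset K^u(x)$ and lies in $\hat S$, the conditions \ref{eqn:eh3} and \ref{eqn:eh2} provide, for each such $x$, a collection of effective hyperbolic times and uniform constants $\delta,\bar\theta,\bar\gamma,\bar\kappa,\bar r$ (depending only on the bounds in \ref{H1}, the H\"older data of $Df$, and the parameters $C,\bl$ appearing in \ref{eqn:eh3}) such that at each effective hyperbolic time $n\in\Gamma^{e}_\bl(x)$ the image $f^n$ of a small neighborhood $W^x_n$ of $x$ in $W$ is a $(\bar\gamma,\bar\kappa)$-admissible manifold of size $\bar r$ with transversals controlled by $\bar\theta$, i.e.\ lies in $\PPP_\II$; the quantitative contraction estimate along the backward orbit that is built into the Hadamard--Perron graph transform gives precisely the defining inequality of $\QQQ_{\JJ,n}$, so that $f^nW^x_n\in\PPP_\II\cap\QQQ_{\JJ,n}$. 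Here $\Gamma^{e'}_\bl(x)$ is the (possibly proper) subset of $\Gamma^e_\bl(x)$ consisting of those effective hyperbolic times for which the graph transform actually produces such a $W^x_n$ — the only reason this may be proper is a measure-zero exceptional set of $x$ where the local unstable manifold constructed in \cite{CP16} is degenerate or the submanifold structure fails; the statement of the lemma is formulated for all $x\in A$ with $\Gamma^{e'}_\bl(x)$ allowed to be empty, so (1) follows. The size bound $W^x_n\subset W\cap B(x,\bar r e^{-\bar\lambda n})$ comes from the contraction: since $f^nW^x_n$ has size $\bar r$ and the orbit is $e^{\bar\lambda\cdot}$-expanding from $W^x_n$ to $f^nW^x_n$ (again built into the graph transform at effective hyperbolic times), the preimage has diameter at most $\bar r e^{-\bar\lambda n}$.

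For statement (2) the key input is the lower bound, valid for $m_W$-a.e.\ $x$ in the positive-measure set $\hat S\cap W\cap\{T_xW\subset K^u(x)\}$, that $\ld(\Gamma^{e'}_\bl(x)\cap\Gamma^s_{\JJ,q}(x))>0$ for $q$ large: this is exactly \ref{eqn:eh3} (with the reduction from $\Gamma^e$ to $\Gamma^{e'}$ absorbing only a null set of $x$, since the exceptional set above is $m_W$-null). So for each such $x$ there is $c(x)>0$ with $\ld\Gamma^\JJ_q(x)\geq c(x)$ for all sufficiently large $q$. Then I would apply Fatou's lemma to the nonnegative functions $x\mapsto \frac1n\#([0,n)\cap\Gamma^\JJ_q(x))$ on $W$, giving
\[
\llim_{n\to\infty}\int_W \tfrac1n\#([0,n)\cap\Gamma^\JJ_q(x))\,dm_W(x)
\geq \int_W \llim_{n\to\infty}\tfrac1n\#([0,n)\cap\Gamma^\JJ_q(x))\,dm_W(x)
= \int_W \ld\,\Gamma^\JJ_q(x)\,dm_W(x).
\]
For $q$ large enough this last integral is at least $\int_{\hat S\cap W\cap\{T_xW\subset K^u(x)\}} c(x)\,dm_W(x)$, which is a strictly positive constant independent of $n$; choosing $\delta$ to be a suitable fraction of this constant (and noting $\Gamma^\JJ_q$ is monotone in $q$, so the $\llim$ over $n$ is monotone in $q$) yields \eqref{eqn:often-Gamma}.

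The main obstacle is making the constants in the Hadamard--Perron theorem genuinely \emph{uniform} over all of $W$ rather than $x$-dependent, and reconciling the $x$-dependent density lower bounds $c(x)$ from \ref{eqn:eh3} with the need for a single $\delta$. The uniformity of $\bar\theta,\bar\gamma,\bar\kappa,\bar r$ is not automatic from \cite{CP16} applied pointwise — those constants there depend on the Lyapunov data of the individual orbit — so the argument requires restricting to a positive-measure subset of $\hat S\cap W$ on which the relevant parameters in \ref{eqn:eh3} (the constants $C,\bl$) and in \ref{eqn:eh2} (the rate at which $\ud\{\theta(f^nx)<\bar\theta\}\to0$) are uniformly bounded; such a subset of positive $m_W$-measure exists by a standard exhaustion argument since $\hat S$ is defined by a countable union over rational parameter values. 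On this uniform subset the Hadamard--Perron constants become uniform, and $c(x)$ can be taken bounded below by a fixed $c_0>0$, at which point $\delta$ can be chosen as, say, $\tfrac12 c_0\, m_W(\hat S\cap W\cap\{T_xW\subset K^u(x)\})$. I expect this uniformization step, together with the careful bookkeeping needed to quote the precise form of the result from \cite{CP16}, to occupy the bulk of the proof in \S\ref{sec:real-eht}; the Fatou argument for (2) is then routine.
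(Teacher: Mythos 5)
Your overall architecture matches the paper's: invoke the Hadamard--Perron theorem of \cite{CP16} at effective hyperbolic times for part (1), restrict to a positive-measure subset of $\hat S\cap W$ on which the relevant parameters are uniform (the paper's sets $S_M$, obtained exactly by the countable exhaustion you describe), and then get \eqref{eqn:often-Gamma} by an integration argument (the paper uses an exhaustion $X^q_m$ of $S_M$ and integrates over $X^q_{N(q)}$ rather than Fatou, but your Fatou step would serve the same purpose once the pointwise density bound is in hand). Also, the uniformity of $\bar\gamma,\bar\kappa,\bar r$ is less delicate than you fear: the constants $L,L'$ in the CP16 statement come from compactness of $\overline U$ and smoothness of $f$, and the only orbit-dependent input that must be uniformized is the angle $\theta(x)\geq 1/M$ at the base point and the density lower bound from \ref{eqn:eh3}.

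The genuine gap is in your description of $\Gamma^{e'}_\bl(x)$ and, consequently, in the pointwise density bound on which part (2) rests. You assert that $\Gamma^{e'}_\bl(x)$ differs from $\Gamma^e_\bl(x)$ only on a measure-zero exceptional set of points $x$, so that $\ld(\Gamma^{e'}_\bl\cap\Gamma^s_{\JJ,q})>0$ is ``exactly \ref{eqn:eh3}.'' That is not how the reduction works, and \ref{eqn:eh3} alone does not give it. The Hadamard--Perron theorem needs control of the angle between the cones along the orbit segment, which is encoded by replacing $\lambda^u-\Delta$ at time $j$ by the penalty $-L''$ whenever $\theta(f^jx)<\bar\theta$; the set $\Gamma^{e'}_\bl(x)$ is defined by the Pliss-type inequalities for these modified sums, and for \emph{every} $x$ (not a null set) it may be a strictly smaller set of times than $\Gamma^e_\bl(x)$. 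Recovering a uniform positive lower density for $\Gamma^{e'}_\bl(x)\cap\Gamma^s_{\JJ,q}(x)$ on $S_M$ is precisely where condition \ref{eqn:eh2} enters: one must choose $\bar\theta\in(0,\tfrac 1M]$ so small that the frequency of angle drops below $\bar\theta$ is negligible compared with the surplus in the expansion sums, which is the content of \cite[Proposition 9.3]{CP16} (or an explicit Pliss-lemma computation). Your proposal never carries out this step --- \ref{eqn:eh2} is mentioned only as a parameter to be uniformized --- so the bound $\ld\,\Gamma^\JJ_q(x)\geq c_0>0$ that feeds your Fatou argument is unjustified, and without it \eqref{eqn:often-Gamma} does not follow.
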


To prove Theorem \ref{thm:main3}, we will use Lemma \ref{lem:real-eht} to show that on average, a large part of the measures $f_*^n m_W$ returns to the space of measures corresponding to standard pairs over $\PPP_\II \cap \QQQ_{\JJ,q}$.  We will also need to control the densities of these pairs, and to guarantee that for the limiting measure, transverse directions have negative Lyapunov exponents (the definition of $\QQQ_{\JJ,q}$ will guarantee positive Lyapunov exponents along the admissible manifolds).  We describe the conditions here, and then in \S\ref{sec:find-srb} we formulate (Theorem \ref{thm:buildSRB}) a general set of criteria for a sequence of measures to have a limit point that has an SRB measure as an ergodic component.

Given $W\in \PPP_{\II} \cap \QQQ_{\JJ,q}$ (in particular, $W\subset f^q(U)$), consider the set
\begin{multline}\label{eqn:HCLN}
H_{\JJ,q}(W) = \{y \mid T_y M = (T_yW) \oplus G \text{ for some } G\\ 
 \text{ with }
\|Df^{-j}(y)|_G^{-1}\| \leq Ce^{-\bl j}
 \text{ for every } 0\leq j\leq q\},
\end{multline}
of all points in $W$ whose backwards trajectories of length $q$ have expansion controlled by $(C,\bl)$ in a direction transverse to $TW$.
Given $\beta,L>0$, we write $\KK=\II \cup \JJ \cup (\beta,L)$ and consider
\begin{equation}\label{eqn:RRR}
\begin{aligned}
\RRR_{\KK,q} &= \bigg\{W\in \PPP_{\II} \cap \QQQ_{\JJ,q}
\mid \frac{m_W\big(H_{\JJ,q}(W)\big)}{m_W(W)} \geq \beta\bigg\}, \\
\RRR'_{\KK,q} &= \{(W,\rho) \mid W\in \RRR_{\KK,q}, \rho \in C^\alpha(W,[\tfrac 1L,L]), |\rho|_\alpha \leq L \}.
\end{aligned}
\end{equation}


\begin{proposition}\label{prop:R-cpt}
In the natural product topology defined in \S\ref{sec:srb-eh}, the set $\RRR_{\KK,q}'$ is compact  and the map $\Phi\colon \MMM(\RRR_{\KK,q}')\to \MMM(\overline{U})$ defined in \eqref{eqn:Phi2} is continuous.  In particular, $\MMM_{\KK,q} = \Phi(\MMM_{\leq 1}(\RRR'_{\KK,q}))$ is weak*-compact.
\end{proposition}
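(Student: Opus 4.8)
The plan is to prove compactness of $\RRR'_{\KK,q}$ by showing that each condition in its definition is closed under limits in the product topology, then to derive continuity of $\Phi$ by testing against continuous functions on $\overline U$, and finally to read off weak$^*$-compactness of $\MMM_{\KK,q}$ from Banach--Alaoglu. First I would unwind the topology. An element of $\RRR'_{\KK,q}$ is a tuple $(x,G,F,\psi,\rho)$, where $x\in\overline U$, $G$ is a $d_u$-plane and $F$ a $d_s$-plane in $T_xM$, $\psi\in C^{1+\alpha}(B_G(r),F)$ satisfies \eqref{eqn:gkadm1}, and $\rho\in C^\alpha(W,[\tfrac1L,L])$ with $|\rho|_\alpha\le L$, where $W=\exp_x(\graph\psi)$. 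Fixing a finite atlas on $\overline U$ and trivializing $TM$ over each chart, each coordinate ranges over a space that is either compact ($\overline U$ and the Grassmannians of $d_u$- and $d_s$-planes) or, by the Arzel\`a--Ascoli theorem, precompact: the set of $\psi$ satisfying \eqref{eqn:gkadm1} is bounded in $C^{1+\alpha}$, hence precompact in $C^1$, and the admissible densities are bounded in $C^\alpha$, hence precompact in $C^0$.

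Hence any sequence in $\RRR'_{\KK,q}$ has a subsequence converging in the product topology to a tuple $(x,G,F,\psi,\rho)$ with $\psi\in C^1$ and $\rho\in C^0$, and the remaining task is to check that the limit still lies in $\RRR'_{\KK,q}$. The normalizations $\psi(0)=0$, $D\psi(0)=0$, $\|D\psi\|\le\gamma$ and the H\"older bound $|D\psi|_\alpha\le\kappa$ all survive $C^1$-limits --- the last since $|D\psi|_\alpha$ is a supremum of difference quotients of $D\psi$ that are jointly continuous under $C^1$-convergence --- so $\psi\in C^{1+\alpha}$, and likewise $\tfrac1L\le\rho\le L$ and $|\rho|_\alpha\le L$ pass to the limit; the transversality $\measuredangle(G,F)\ge\theta$ is closed on the Grassmann bundle. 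The inclusions $G\subset\overline{K^u(x)}$ and $F\subset\overline{K^s(x)}$ I would read through the closures of the graphs $\overline{\graph K^u},\overline{\graph K^s}\subset TM$, which are closed by construction and so preserved under limits (enlarging $\PPP_\II$ this way is harmless, since the geometric estimates use only $\theta,\gamma,\kappa,r$). Membership $W\in\QQQ_{\JJ,q}$ is preserved because $f$ is a diffeomorphism, so $f^{-q}(W_n)\to f^{-q}(W)=:V_0$ in $C^1$ and the distance inequalities $d(f^jy,f^jz)\le Ce^{-\bl(q-j)}d(f^qy,f^qz)$ for $y,z\in f^{-q}(W_n)$ pass to the limit by continuity of each $f^j$.

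The one condition needing more than routine care is the leaf-measure bound $m_W(H_{\JJ,q}(W))\ge\beta\,m_W(W)$. Here I would first observe that, for fixed $y$, the set of planes $G$ with $\|Df^{-j}(y)|_G^{-1}\|\le Ce^{-\bl j}$ for $0\le j\le q$ is compact and upper-semicontinuous in $y$, and that any such $G$ is uniformly transverse to $T_yW$ --- the forward expansion along leaves built into $\QQQ_{\JJ,q}$ separates the backward-expanding direction $G$ from the backward-contracting direction $T_yW$ quantitatively --- so that $H_{\JJ,q}(W)$ is closed in $W$ and $W\mapsto H_{\JJ,q}(W)$ is upper semicontinuous as a set-valued map. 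Since $C^1$-convergence $W_n\to W$ gives both $m_{W_n}\to m_W$ weakly and $m_{W_n}(W_n)\to m_W(W)$, the Portmanteau theorem (applied to shrinking closed neighborhoods of $H_{\JJ,q}(W)$) yields $m_W(H_{\JJ,q}(W))\ge\ulim_n m_{W_n}(H_{\JJ,q}(W_n))\ge\beta\,m_W(W)$. This shows the limit tuple lies in $\RRR'_{\KK,q}$, and since the product topology is metrizable, $\RRR'_{\KK,q}$ is compact.

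For continuity of $\Phi$, fix $\varphi\in C(\overline U)$ and set $F_\varphi(W,\rho)=\int_W\varphi\rho\,dm_W$, so that $\int_{\overline U}\varphi\,d\Phi(\eta)=\int_{\RRR'_{\KK,q}}F_\varphi\,d\eta$ by \eqref{eqn:Phi2}. Then $F_\varphi$ is bounded (by $L\|\varphi\|_\infty\sup_W m_W(W)$) and continuous on $\RRR'_{\KK,q}$: parametrizing nearby leaves over a common ball, one writes $F_\varphi$ as an integral over that ball of the product of $\varphi$ pulled back along $\exp_{x_n}\circ(\Id,\psi_n)$, the density $\rho_n$, and the induced volume Jacobian, and all three factors converge uniformly (the first because $\varphi$ is uniformly continuous and the parametrizations converge uniformly, the second by hypothesis, the third because the Jacobian depends continuously on $D\psi_n$ and $\psi_n\to\psi$ in $C^1$). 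Hence $F_\varphi\in C(\RRR'_{\KK,q})$ for every $\varphi$, so $\Phi$ is weak$^*$-continuous. Finally, $\MMM_{\le1}(\RRR'_{\KK,q})$ is weak$^*$-compact by Banach--Alaoglu ($\RRR'_{\KK,q}$ being compact metrizable), and therefore $\MMM_{\KK,q}=\Phi(\MMM_{\le1}(\RRR'_{\KK,q}))$ is weak$^*$-compact as the continuous image of a compact set. The only genuine obstacle is the semicontinuity analysis of $H_{\JJ,q}(W)$; the apparent trouble caused by the merely measurable cone families $K^{u,s}$ evaporates once the cone inclusions are phrased via the closed graphs $\overline{\graph K^{u,s}}$.
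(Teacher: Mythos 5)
Your proposal is correct and follows essentially the same route as the paper's proof: Grassmannian compactness and Arzel\`a--Ascoli for the geometric data and the densities, passing the dynamical conditions \eqref{eqn:QQQ} and \eqref{eqn:HCLN} to the limit, deducing the leaf-measure bound $m_W(H_{\JJ,q}(W))\geq\beta\, m_W(W)$ from the closedness of limits of points of $H_{\JJ,q}(W_k)$ together with weak* convergence of the normalized leaf volumes (the paper packages this via the nested closures $Y_m=\overline{\bigcup_{k\geq m}H_{\JJ,q}(W_k)}$ where you invoke upper semicontinuity and Portmanteau, which is the same estimate), and finally continuity of $\Phi$ and compactness of $\MMM_{\leq 1}(\RRR'_{\KK,q})$. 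Your explicit handling of the merely measurable cone families through the closed graphs $\overline{\graph K^{u,s}}$ addresses a point the paper passes over silently, but it does not alter the structure of the argument.
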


Now we give a condition under which a part of $m_W$ returns to $\MMM_{\KK,q}$.  Given $0\leq q\leq n$, consider the set
\begin{equation}\label{eqn:SJqn}
S_{\JJ,q,n} := \{x\in W \mid n\in \Gamma_q^\JJ(x)\}.
\end{equation}

\begin{lemma}\label{lem:eht2}
For every $\beta'>0$ there is $\beta>0$  such that the following is true.  If $0\leq q\leq n$ and $x\in S_{\JJ,q,n}$, and moreover the set $W_n^x \subset W$ from Lemma \ref{lem:real-eht} satisfies
\begin{equation}\label{eqn:mWnx}
m_W(W_n^x \cap S_{\JJ,q,n}) \geq \beta' m_W(W_n^x),
\end{equation}
then we have $f_*^n m_{W_n^x} \in \MMM_{\KK,q}$.
\end{lemma}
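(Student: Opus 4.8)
The plan is to take the set $W_n^x$ provided by Lemma \ref{lem:real-eht}, restrict the pushforward $f_*^n m_{W_n^x}$ to the image of its effectively-hyperbolic-at-time-$n$ part, and cover that image by admissible manifolds drawn from $\RRR_{\KK,q}$. Concretely, set $V = W_n^x$ and recall from \eqref{eqn:eh-admissible} that $f^n V \in \PPP_\II \cap \QQQ_{\JJ,n} \subset \PPP_\II \cap \QQQ_{\JJ,q}$ (the containment in $\QQQ$ for the smaller index $q\le n$ is immediate from the definition, since the contraction estimate for all $0\le j\le n$ implies it for $0\le j\le q$). The hypothesis $x\in S_{\JJ,q,n}$ means $n\in\Gamma_q^\JJ(x) = \Gamma_\bl^{e'}(x)\cap\Gamma_{\JJ,q}^s(x)$, and the key point is that membership of $n$ in $\Gamma_{\JJ,q}^s$ is a statement about the backward dynamics near $f^n x$ in the stable cone, which by continuity/uniformity persists on the whole piece $f^n(W_n^x \cap S_{\JJ,q,n})$: for each such point $y = f^n(x')$ with $x'\in W_n^x\cap S_{\JJ,q,n}$, the complementary subspace $G$ witnessing \eqref{eqn:HCLN} can be taken in $K^s(y)$ with $\|Df^{-j}(y)|_G^{-1}\|\le Ce^{-\bl j}$ for $0\le j\le q$. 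Hence $f^n(W_n^x\cap S_{\JJ,q,n}) \subset H_{\JJ,q}(f^n V)$.

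Next I would bound the volume: since $f^n V \in \QQQ_{\JJ,n}$ the map $f^n|_V$ has distortion controlled by $C,\bl$ (this is exactly the uniform contraction estimate in \eqref{eqn:QQQ} together with the $C^{1+\alpha}$ bounds on $f$ and the bounded curvature of $V\in\PPP_\II$, which give a Hölder bound on $\log|Df^n|_{T_zV}|$ over $z\in V$ — a standard argument). Therefore
\[
\frac{m_{f^nV}\big(f^n(W_n^x\cap S_{\JJ,q,n})\big)}{m_{f^nV}(f^nV)}
\geq D_0^{-1}\,\frac{m_W(W_n^x\cap S_{\JJ,q,n})}{m_W(W_n^x)}
\geq D_0^{-1}\beta'
\]
for a distortion constant $D_0 = D_0(C,\bl,\II,\alpha)$ depending only on the fixed parameters, using \eqref{eqn:mWnx}. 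Now $f^n V$ itself may be too large (longer than $\bar r$), so I would chop it into finitely many pieces of size in $[\bar r/2,\bar r]$ — or, more carefully, cover $f^n V$ by finitely many admissible manifolds $W_i\in\PPP_\II$ of the required size with uniformly bounded multiplicity via Lemma \ref{lem:besicovitch}, so that $f_*^n m_V = \sum_i \rho_i\,dm_{W_i}$ is a convex combination of standard pairs, where the densities $\rho_i$ inherit from $f_*^n m_V$ a Hölder bound: $|\log\rho_i|\le \log L_0$ and $|\rho_i|_\alpha\le L_0$, again by the distortion estimate. For those $W_i$ meeting $f^n(W_n^x\cap S_{\JJ,q,n})$ in positive relative measure — which, by the volume lower bound above and a pigeonhole/averaging argument, carry a definite proportion of the total mass — the relative measure of $H_{\JJ,q}(W_i)$ is at least some $\beta$ depending only on $\beta'$ and the fixed constants. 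Discarding the pieces that do not meet this set, and setting $\beta$ accordingly and $L\ge L_0$, each surviving $(W_i,\rho_i)$ lies in $\RRR'_{\KK,q}$, so their (sub-)convex combination defines a measure in $\MMM_{\KK,q}$ that is $\le f_*^n m_{W_n^x}$; taking $\beta$ small enough that we can in fact keep \emph{all} of the mass (absorbing the distortion and multiplicity constants into a single $\beta=\beta(\beta')$) gives $f_*^n m_{W_n^x}\in\MMM_{\KK,q}$ as claimed.

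The main obstacle is the bookkeeping in the third paragraph: verifying that after covering $f^n V$ by size-$\bar r$ admissible manifolds, the relative $H_{\JJ,q}$-measure on \emph{each} relevant piece stays bounded below — the set $H_{\JJ,q}(f^nV)$ has relative measure $\ge D_0^{-1}\beta'$ in the whole image, but this could in principle concentrate on pieces we would like to discard for other reasons. The fix is that here we need only produce \emph{some} measure in $\MMM_{\KK,q}$ dominated by $f_*^n m_{W_n^x}$, not all of it, so a one-sided averaging argument suffices; and in fact since $\RRR_{\KK,q}$ only asks for $m_{W_i}(H_{\JJ,q}(W_i))/m_{W_i}(W_i)\ge\beta$ on the underlying manifold (a condition about the manifold, not the density), we may simply throw away any piece where this fails and check that what remains still has positive mass — which holds once $\beta \le D_0^{-1}\beta'/(2M_0)$ where $M_0$ is the Besicovitch multiplicity bound, by a Markov-inequality estimate on the discarded mass. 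Choosing the parameters $\beta,L$ in $\KK$ in this order — $L$ from the distortion bound, then $\beta$ from $\beta'$, $D_0$, and $M_0$ — completes the argument.
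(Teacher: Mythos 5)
Your opening steps match the paper's argument: the image $f^nW_n^x$ has controlled geometry and dynamics, each point of $W_n^x\cap S_{\JJ,q,n}$ is sent into $H_{\JJ,q}(f^nW_n^x)$ by the very definition of $\Gamma^s_{\JJ,q}$ (no continuity argument is needed, it is pointwise), and a bounded-distortion estimate transfers the hypothesis \eqref{eqn:mWnx} to a lower bound on the relative measure of $H_{\JJ,q}$ in the image. This is exactly what the paper does, except that it proves the distortion bound carefully (Lemma \ref{lem:bdd-distortion}, using the backward contraction from \eqref{eqn:QQQ} together with control of the tangent spaces of the intermediate images $f^kW_n^x$ coming from \cite{CP16}), whereas you cite it as ``standard''; that part is acceptable as a sketch.

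The genuine gap is in your third paragraph. First, the premise that triggers it is false: by Lemma \ref{lem:real-eht} (via the Hadamard--Perron theorem), $f^nW_n^x$ is already a \emph{single} admissible manifold in $\PPP_\II$ of size exactly $\bar r$ -- it is the graph of $\psi_n$ over $B_{E^u}(\bar r)$ -- so there is nothing to chop or to cover; the paper simply checks that $(f^nW_n^x,\rho_n^x)\in\RRR'_{\KK,q}$ with $\beta=\beta'L^{-2}$ and writes $f_*^nm_{W_n^x}$ as $\Phi$ of a point mass of weight $\phi_n(x)^{-1}\le 1$ on this standard pair. Second, once you do introduce a covering by smaller admissible pieces $W_i$ and discard those with small relative $H_{\JJ,q}$-measure, you can no longer conclude the statement of the lemma: the set $H_{\JJ,q}(f^nW_n^x)$, though it has relative measure $\ge D_0^{-1}\beta'$ in the whole image, may be entirely concentrated in part of $f^nW_n^x$, so some covering pieces meet it in a set of measure zero and fail the density condition in \eqref{eqn:RRR} for \emph{every} $\beta>0$. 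Your Markov-inequality estimate therefore only produces a proper sub-measure of $f_*^nm_{W_n^x}$ lying in $\MMM_{\KK,q}$, and the parenthetical claim that ``taking $\beta$ small enough we can in fact keep all of the mass'' is unjustified and false in general; but the lemma asserts that the \emph{entire} measure $f_*^nm_{W_n^x}$ belongs to $\MMM_{\KK,q}$, and that is also what Proposition \ref{prop:unif-large} uses. A minor related slip: the raw density of $f_*^nm_{W_n^x}$ with respect to $m_{f^nW_n^x}$ is of order $\phi_n(x)^{-1}$, exponentially small, not in $[\frac1L,L]$; it is the normalized density $\rho_n^x$ of \eqref{eqn:rhonx} that satisfies the $[\frac1L,L]$ and H\"older bounds, with the scalar $\phi_n(x)^{-1}$ absorbed into the total mass of the measure $\eta\in\MMM_{\leq1}(\RRR'_{\KK,q})$.
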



We will use \eqref{eqn:often-Gamma} from Lemma \ref{lem:real-eht} to verify \eqref{eqn:mWnx} for `many' points $x$; then Lemma \ref{lem:eht2} will give a large part of $\mu_n = \frac 1n \sum_{k=0}^{n-1} f_*^k m_W$ sitting in $\MMM_{\KK,q}$.  
We will need
the following version of the Besicovitch covering lemma for the `balls' $W_n^x \subset W$ with centres $x\in S_{\JJ,q,n}$.  We give a proof in \S\ref{sec:besicovitch}.

\begin{lemma}\label{lem:besicovitch}
There is $p\in \NN$, depending only on $d_u, \bar\theta, \bar\gamma, \bar\kappa$, and $\bar r$, such that for every $n\in \NN$ and every $0\leq q\leq n$, there are subsets $A_1,\dots, A_p \subset S_{\JJ,q,n}$ such that
\begin{itemize}
\item $S_{\JJ,q,n} \subset \bigcup_{i=1}^p \bigcup_{x\in A_i} W_n^x$;
\item for each $1\leq i\leq p$ and $x,y\in A_i$, we have $x=y$ or $W_n^x \cap W_n^y = \emptyset$.
\end{itemize}
\end{lemma}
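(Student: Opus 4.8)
The plan is to deduce this from a suitable abstract version of the Besicovitch covering lemma, applied to the family of ``balls'' $\{W_n^x : x\in S_{\JJ,q,n}\}$ inside the fixed submanifold $W$. The first thing to observe is that the geometry of each $W_n^x$ is uniformly controlled: by Lemma~\ref{lem:real-eht}, $W_n^x \subset W \cap B(x, \bar r e^{-\bar\lambda n})$, and after applying $f^n$ the image $f^n(W_n^x)$ is a $(\bar\gamma,\bar\kappa)$-admissible manifold of size $\bar r$ with transversals controlled by $\bar\theta$; pulling this structure back through $f^n$ (using the $\QQQ_{\JJ,n}$ estimate $d(f^j y, f^j z)\le Ce^{-\bar\lambda(n-j)}d(f^n y, f^n z)$ with $j=0$) shows that each $W_n^x$, as a subset of $W$, is a graph over an affine disk with uniformly bounded slope and uniformly bounded Hölder curvature, containing $x$ and comparable in $m_W$-measure to a genuine metric ball around $x$ in $W$ of radius $\asymp e^{-\bar\lambda n}$. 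Crucially, within a single image parameter $n$ all these pseudo-balls are ``round'' at the same scale, so the family behaves like a family of Euclidean balls of a fixed radius.

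Next I would invoke the standard Besicovitch-type argument for such a family. The key combinatorial input is a bound, depending only on $d_u$ and the geometric constants $\bar\theta,\bar\gamma,\bar\kappa,\bar r$, on the maximal number of pairwise-overlapping pseudo-balls $W_n^x$ all of which contain a common point: because each $W_n^x$ contains a metric ball of radius $c_1 e^{-\bar\lambda n}$ and is contained in a metric ball of radius $c_2 e^{-\bar\lambda n}$ (both comparable), a volume-packing argument in $W$ gives a uniform bound $N_0$ on such multiplicities. From this one extracts, by the usual greedy selection procedure (repeatedly choose a pseudo-ball of near-maximal radius disjoint from those already chosen, which here means near-maximal $n$-scale — but within a fixed $n$ all radii are comparable, so this is just a maximal disjoint subfamily argument), a covering of $S_{\JJ,q,n}$ by at most $p = p(d_u,\bar\theta,\bar\gamma,\bar\kappa,\bar r)$ subfamilies $A_1,\dots,A_p$, each consisting of pairwise disjoint pseudo-balls. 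This is where I would cite the proof deferred to \S\ref{sec:besicovitch}; in writing that section one would either quote a general metric Besicovitch lemma (e.g. for families of sets that are ``comparable to balls'') or reprove the selection procedure directly for this concrete family.

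The main obstacle, and the reason the proof is deferred rather than one-line, is verifying that the sets $W_n^x$ genuinely satisfy the hypotheses of an abstract Besicovitch lemma — in particular the doubling/engulfing property and the two-sided comparison with metric balls — uniformly in $n$ and $q$. The dependence on $q$ is a red herring: $q$ only enters through which points lie in $S_{\JJ,q,n}$, not through the shape of $W_n^x$, so once we restrict centres to $S_{\JJ,q,n}$ the covering statement is the same for all $q$. The dependence on $n$ is handled by the scaling remark above: the constant $p$ comes out independent of $n$ because the ratio (outer radius)/(inner radius) of each $W_n^x$ is bounded by a constant depending only on $\bar\theta,\bar\gamma,\bar\kappa,\bar r$ (the Hölder-curvature and slope bounds), not on $n$. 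A technical point to be careful about is that $W$ is only an embedded submanifold of $U$, not of bounded geometry globally, but since $\overline{f(U)}\subset U$ and we are working with $W_n^x$ of radius $\bar r e^{-\bar\lambda n}\le \bar r$, all the relevant geometry takes place at scales below a fixed threshold where $W$'s induced metric is uniformly bi-Lipschitz to a Euclidean disk; this is what pins down the constant $c_1,c_2$ and hence $p$ in terms of the listed parameters alone.
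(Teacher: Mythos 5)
There is a genuine gap, and it is exactly at the step you flag as ``crucial''. You claim that each $W_n^x$, viewed inside $W$, is comparable (with constants depending only on $d_u,\bar\theta,\bar\gamma,\bar\kappa,\bar r$) to a metric ball about $x$ of radius $\asymp \bar r e^{-\bl n}$, so that at a fixed $n$ the family behaves like balls of one common radius and a maximal-disjoint-subfamily argument suffices. But the only uniform information available downstairs is the \emph{outer} inclusion $W_n^x \subset B(x,\bar r e^{-\bl n})$, coming from the $\QQQ_{\JJ,n}$ estimate with $j=0$; there is no matching lower bound at that scale. The definition of effective hyperbolic times only bounds the accumulated expansion from \emph{below} by $e^{\bl(n-k)}$; the actual expansion of $f^n$ along $W_n^x$ can be anywhere up to $e^{Ln}$ (with $L$ the uniform bound on $\log\|Df\|$), and it can differ from one centre $x\in S_{\JJ,q,n}$ to another. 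Hence two sets $W_n^x$, $W_n^y$ at the same $n$ can have diameters differing by a factor as large as $e^{(L-\bl)n}$, so ``all radii are comparable within a fixed $n$'' is false, and the greedy same-scale selection does not give a bound $p$ independent of $n$. Repairing this downstairs would force you into a Besicovitch/Morse-type covering theorem for quasi-round sets of \emph{varying} sizes, and there you hit precisely the obstruction the paper points out: the relevant leaves are only $C^{1+\alpha}$, so directional limitedness (the hypothesis of the general covering theorem in Federer) is not available, and ``quote a general metric Besicovitch lemma'' cannot simply be cited.

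The paper's proof avoids this by running the entire covering argument \emph{upstairs} in the image $\hat W = f^n(W)$: there the sets $\hat W_n^x = f^n W_n^x$ are uniform graphs with slope $\le\bar\gamma$, H\"older curvature $\le\bar\kappa$ and transversality $\ge\bar\theta$, hence satisfy $B_{\hat W}(\hat x,r)\subset \hat W_n^x\subset B_{\hat W}(\hat x,r')$ for radii $0<r<r'$ depending only on the listed parameters and \emph{not} on $n$ or $q$ (injectivity of $f^n$ makes disjointness upstairs equivalent to disjointness downstairs). One then takes a maximal $r$-separated set of image centres to get the covering, and bounds the chromatic number of the overlap graph by a volume-packing estimate comparing $m_{\hat W}$-volumes of balls of radii $r/2$ and $3r'$ via the Lipschitz constants of the exponential map and the angle control. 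If you want to keep your outline, the fix is to transport the whole selection and multiplicity argument to $f^n(W)$, where the uniform two-sided ball comparison you need actually holds.
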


In \S\ref{sec:pf-unif-large}, we use Lemma \ref{lem:besicovitch} to produce for each $0\leq q\leq n$ a finite set $S'_{\JJ,q,n} \subset S_{\JJ,q,n}$ such that
\begin{enumerate}
\item the sets $W_n^x$ associated to each $x\in S_{\JJ,q,n}'$ are disjoint;
\item the union of these sets is `large': writing
\[
W_{\JJ,q,n} := \bigcup_{x\in S_{\JJ,q,n}'} W_n^x,
\] 
for each  $\beta'>0$, one can choose $S_{\JJ,q,n}'$ such that for all $q,n$, we have
\[
m_W(W_{\JJ,q,n}) \geq \tfrac 1p m_W(S_{\JJ,q,n}) - \beta' m_W(W).
\]
\end{enumerate} 
Then we will use Lemma \ref{lem:eht2} to conclude that $f_*^n m_{W_{\JJ,q,n}} \in \MMM_{\KK,q}$; averaging the lower bound over $n$ and using \eqref{eqn:often-Gamma} will give a sequence of measures with the following property.
 
\begin{definition}
Given a sequence of measures $\mu_n \in \MMM (U)$ and a sequence of numbers $q_n\to\infty$, we say that $\mu_n$ have \emph{uniformly large projections onto} $\MMM_{\KK,q_n}$ if there exist $\delta>0$ and a sequence of measures 
$\eta_n\in\MMM(\RRR_{\KK,q_n}')$ such that $\Phi(\eta_n)\le\mu_n$ and 
$\|\Phi(\eta_n)\|\ge\delta$ for every $n$, where $\nu\leq \mu$ means that $\nu(E) \leq \mu(E)$ for every measurable set $E$.
\end{definition}

The above discussion outlines the proof of the following result, whose details are given in \S\ref{sec:pf-unif-large}.

\begin{proposition}\label{prop:unif-large}
Under the hypotheses of Theorem \ref{thm:main3}, there is a sequence of numbers $q_n\to\infty$ such that the measures $\mu_n = \frac 1n \sum_{k=0}^{n-1} f_*^k m_W$ have uniformly large projections onto $\MMM_{\KK,q_n}$.
\end{proposition}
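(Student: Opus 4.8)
The plan is to assemble Proposition~\ref{prop:unif-large} from the three preceding lemmas by a counting/averaging argument. First I would fix $\beta'>0$ (to be chosen small at the end) and let $\beta>0$ be the constant produced by Lemma~\ref{lem:eht2} for this $\beta'$; this fixes the parameter $\beta$ in $\KK = \II\cup\JJ\cup(\beta,L)$, where $\II$ and $\delta$ come from Lemma~\ref{lem:real-eht} and $L$ is chosen afterward once the densities are under control (the densities $\rho$ arising here are the Radon--Nikodym derivatives of $f_*^nm_{W_n^x}$ with respect to leaf volume on $f^n W_n^x$, whose H\"older norms are bounded uniformly by the distortion estimates built into $\QQQ_{\JJ,n}$ and $\PPP_\II$, so a single $L$ works for all $n$). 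I would then set $q_n\to\infty$ to be any sequence growing slowly enough that the $\varliminf$ in \eqref{eqn:often-Gamma} is still realized along it --- concretely, pick $q_n$ so that for each fixed $q$ one has $q_n\ge q$ eventually, and use a diagonal choice so that $\varliminf_n \int_W \frac1n\#([0,n)\cap\Gamma_{q_n}^\JJ(x))\,dm_W(x) > \delta$.

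Next I would run the covering step. For each $n$ and each $k<n$ apply Lemma~\ref{lem:besicovitch} with $q=q_n$ to the set $S_{\JJ,q_n,k}$ of points $x\in W$ with $k\in\Gamma_{q_n}^\JJ(x)$: this gives $p$ subfamilies of pairwise-disjoint balls $W_k^x$ covering $S_{\JJ,q_n,k}$, hence (as the excerpt indicates for \S\ref{sec:pf-unif-large}) a disjoint subfamily indexed by a finite set $S_{\JJ,q_n,k}' $ with $m_W(W_{\JJ,q_n,k}) \ge \frac1p m_W(S_{\JJ,q_n,k}) - \beta' m_W(W)$, where $W_{\JJ,q_n,k} = \bigcup_{x\in S_{\JJ,q_n,k}'} W_k^x$. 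The point of subtracting $\beta' m_W(W)$ is precisely to discard the balls that fail the density bound \eqref{eqn:mWnx}: among the disjoint balls selected, the total $m_W$-measure of those with $m_W(W_k^x\cap S_{\JJ,q_n,k}) < \beta' m_W(W_k^x)$ contributes at most $\beta' m_W(W)$ (since they are disjoint), so after removing them the remaining union $W'_{\JJ,q_n,k}$ still has $m_W$-measure at least $\frac1p m_W(S_{\JJ,q_n,k}) - 2\beta' m_W(W)$ and consists entirely of balls to which Lemma~\ref{lem:eht2} applies. Hence $f_*^k m_{W'_{\JJ,q_n,k}}\in\MMM_{\KK,q_n}$; in fact, writing this pushforward as $\Phi$ of the measure on $\RRR'_{\KK,q_n}$ built from the standard pairs $(f^kW_k^x,\rho_x)$, it is of the form $\Phi(\zeta_{n,k})$ with $\|\Phi(\zeta_{n,k})\| = m_W(W'_{\JJ,q_n,k})$.

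Now I would average over $k$. Set $\eta_n = \frac1n\sum_{k=0}^{n-1}\zeta_{n,k}\in\MMM(\RRR'_{\KK,q_n})$, so that $\Phi(\eta_n) = \frac1n\sum_{k=0}^{n-1} f_*^k m_{W'_{\JJ,q_n,k}} \le \frac1n\sum_{k=0}^{n-1} f_*^k m_W = \mu_n$, where the inequality holds because each $W'_{\JJ,q_n,k}\subset W$ and so $f_*^k m_{W'_{\JJ,q_n,k}} \le f_*^k m_W$ as measures. For the lower bound on the total mass,
\[
\|\Phi(\eta_n)\| = \frac1n\sum_{k=0}^{n-1} m_W(W'_{\JJ,q_n,k}) \ge \frac1p \cdot \frac1n\sum_{k=0}^{n-1} m_W(S_{\JJ,q_n,k}) - 2\beta' m_W(W),
\]
and by Fubini the sum $\frac1n\sum_{k=0}^{n-1} m_W(S_{\JJ,q_n,k})$ equals $\int_W \frac1n\#([0,n)\cap\Gamma_{q_n}^\JJ(x))\,dm_W(x)$, whose $\varliminf$ exceeds $\delta$ by our choice of $q_n$. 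Choosing $\beta' < \delta/(4p\, m_W(W))$ we get $\varliminf_n\|\Phi(\eta_n)\| \ge \delta/(2p) =: \delta_0 > 0$, which is exactly uniform largeness (after passing to the subsequence realizing the $\varliminf$, or noting the definition only asks for a lower bound for every $n$ along the relevant sequence).

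\textbf{Main obstacle.} The routine parts are the averaging and the Fubini identity; the genuinely delicate point is the bookkeeping in the covering step --- ensuring that the sets $W_k^x$ supplied by Lemma~\ref{lem:real-eht} for different centres $x$ (at the \emph{same} time $k$) can be made disjoint while still covering a definite fraction of $S_{\JJ,q_n,k}$, and simultaneously that the density condition \eqref{eqn:mWnx} needed by Lemma~\ref{lem:eht2} survives the selection. Lemma~\ref{lem:besicovitch} is tailored to handle the disjointness-vs-covering tension with the multiplicity constant $p$, but one must still verify that discarding low-density balls costs only $O(\beta' m_W(W))$ rather than a multiplicative loss; this works precisely because disjointness makes the discarded measure additive. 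A secondary technical point is checking that the densities $\rho_x$ of $f_*^k m_{W_k^x}$ genuinely lie in the class $C^\alpha(\cdot,[\tfrac1L,L])$ with $|\rho_x|_\alpha\le L$ for a \emph{uniform} $L$ independent of $k,n,x$ --- this is where the bounded-distortion content of $\QQQ_{\JJ,k}$ and the curvature bounds in $\PPP_\II$ are used, and it is the reason $\RRR'_{\KK,q}$ was set up with those parameters in the first place.
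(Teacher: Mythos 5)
Your route is the paper's: Besicovitch covering (Lemma \ref{lem:besicovitch}), a split of $S_{\JJ,q,n}$ into balls satisfying or failing the density threshold \eqref{eqn:mWnx}, Lemma \ref{lem:eht2} on the surviving balls, an average over $k$ combined with the Fubini identity, and a diagonal choice of $q_n$ realizing \eqref{eqn:often-Gamma}. Two points of bookkeeping need repair. The more serious one: you average over all $0\le k<n$ and assert $\zeta_{n,k}\in\MMM(\RRR'_{\KK,q_n})$ for every $k$, but for $k<q_n$ this is not available. Lemma \ref{lem:eht2} is stated, and only makes sense, for $q\le n$: the set $S_{\JJ,q,k}$ in \eqref{eqn:SJqn} and the set $H_{\JJ,q}$ in \eqref{eqn:HCLN} require control of $q$ backward iterates, and $f^kW^x_k$ need not lie in $f^{q_n}(U)$ when $k<q_n$. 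The paper handles exactly this by summing only over $q_n\le k<n$ and by building into the choice of $q_n$, besides $n\ge N(q_n)$, the requirement that $q_n$ be small compared with $n$, so that dropping the initial terms costs at most $\tfrac{q_n}{n}\,m_W(W)$, which is absorbed into the constant $\beta''$. Your diagonal choice does not record this second requirement; without it (and without restricting the sum to $k\ge q_n$) the claim $\eta_n\in\MMM(\RRR'_{\KK,q_n})$ is unjustified. The fix is immediate, but it must be stated.

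Second, your justification of the discard step is off: the total $m_W$-measure of the low-density balls is not bounded by $\beta'\,m_W(W)$ (in principle every selected ball could fail \eqref{eqn:mWnx}); what disjointness gives is that the portion of $S_{\JJ,q_n,k}$ covered by those balls is at most $\beta'\sum_x m_W(W^x_k)\le \beta'\,m_W(W)$. The correct count, which is the one in the paper, bounds $m_W(S_{\JJ,q,n})$ from above by $p\beta'\,m_W(W)$ plus the total measure of the good balls over all colour classes, and then pigeonholes over the classes to get \eqref{eqn:1pbeta'} with a single $\beta'$ loss; your final inequality (with $2\beta'$) is true, but the reason you give for it is not. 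With these two repairs your argument coincides with the paper's proof; your remaining remark about a uniform $L$ for the densities is indeed the content of Lemma \ref{lem:bdd-distortion}, which is already packaged inside Lemma \ref{lem:eht2}.
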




Once Proposition \ref{prop:unif-large} is proved, it only remains to show that the ``uniformly large projections'' condition  gives an SRB measure; we discuss this next.

\subsection{General criteria for existence of an SRB measure}\label{sec:find-srb}

\begin{thm}\label{thm:buildSRB}
Let $M$ be a Riemannian manifold, $U\subset M$ an open set such that $\overline{U}$ is compact, and 
$f\colon U\to M$ a $C^{1+\alpha}$ diffeomorphism onto its image such that 
$\overline{f(U)}\subset U$. Let also $\mu_n$ be a sequence of measures on 
$M$ that converges in the weak* topology to an invariant measure $\mu$.
Suppose that there exists $\KK=(\theta, \gamma, \kappa,r,C,\bl,\beta,L)$ and a sequence $q_n\to\infty$ such that the measures $\mu_n$ have uniformly large projections onto $\MMM_{\KK,q_n}$. Then $\mu$ has an ergodic component that is an SRB measure.
\end{thm}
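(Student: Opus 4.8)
The plan is to use the uniformly large projections onto $\MMM_{\KK,q_n}$ to extract a non-trivial invariant measure $\nu$ with good absolute continuity properties along admissible manifolds, sitting beneath $\mu$, and then to pass to an ergodic component that is hyperbolic. First I would use the hypothesis to fix $\delta>0$ and measures $\eta_n\in\MMM(\RRR'_{\KK,q_n})$ with $\Phi(\eta_n)\leq\mu_n$ and $\|\Phi(\eta_n)\|\geq\delta$. Since $q_n\to\infty$, for any fixed $q$ we have $\RRR'_{\KK,q_n}\subset\RRR'_{\KK,q}$ for all large $n$ (the condition in \eqref{eqn:QQQ} and \eqref{eqn:HCLN} for $q_n$ is stronger than for $q$); by Proposition \ref{prop:R-cpt} each $\MMM_{\KK,q}$ is weak*-compact, so after passing to a subsequence (diagonalizing over $q$) we may assume $\Phi(\eta_n)\to\nu$ weak* with $\nu\in\bigcap_q\MMM_{\KK,q}$ and $\|\nu\|\geq\delta>0$. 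Taking weak* limits in $\Phi(\eta_n)\leq\mu_n$ and using $\mu_n\to\mu$ gives $\nu\leq\mu$.

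Next I would upgrade $\nu$ to an \emph{invariant} measure with the same absolute continuity properties. The point is that $\nu\in\bigcap_q\MMM_{\KK,q}$ means $\nu$ is carried by standard pairs $(W,\rho)$ with $W$ admissible of uniformly large scale, densities bounded by $L$, uniform expansion along $W$ (from $\QQQ_{\JJ,q}$, valid for all iterates back to $0$ in the $q\to\infty$ limit), and uniform contraction in a transverse direction on the set $H_{\JJ,q}(W)$ of relative measure at least $\beta$. This is exactly the kind of structure captured by the space $\Mac$ of measures with good absolute continuity along admissible manifolds referred to in \S\ref{sec:srb-eh}; so $\nu\in\Mac$ (or a closely related space). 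Since $\mu$ is invariant and $\nu\leq\mu$, I would apply the Lebesgue-type decomposition (Proposition \ref{prop:decomposition}) to write $\mu=\mu^{(1)}+\mu^{(2)}$ with $\mu^{(1)}\in\Mac$ invariant and $\nu\leq\mu^{(1)}$; in particular $\mu^{(1)}$ is non-trivial, $\|\mu^{(1)}\|\geq\delta$.

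It remains to find a hyperbolic ergodic component of $\mu^{(1)}$ and invoke the characterization of SRB measures. The absolute continuity built into $\Mac$, together with the unstable-direction expansion inherited from $\QQQ_{\JJ,q}$, guarantees that $\mu^{(1)}$-almost every point has $d_u$ positive Lyapunov exponents along the admissible-manifold direction with definite lower bound $\bl$. The transverse contraction on $H_{\JJ,q}(W)$, which has $\nu$-measure at least $\beta\|\nu\|>0$, shows that a positive-$\mu^{(1)}$-measure set of points has $d_s$ negative Lyapunov exponents in a complementary direction; pushing this forward under $f$ and using invariance and the subadditive ergodic theorem, I would conclude that the set of points with all Lyapunov exponents non-zero (hence exactly $d_u$ positive and $d_s$ negative) has positive $\mu^{(1)}$-measure. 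Decomposing $\mu^{(1)}$ into ergodic components, at least one component $\mu_0$ is hyperbolic, and since $\mu_0$ inherits (on a positive-measure set, hence by ergodicity almost everywhere) the absolute continuity of conditional measures along unstable manifolds that is encoded in $\Mac$, it satisfies Definition \ref{def:SRB}: $\mu_0$ is an SRB measure. Finally, forward invariance of $\Lambda$ and $\overline{f(U)}\subset U$ force $\mu_0$ to be supported on $\Lambda$.

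The main obstacle I expect is the second paragraph: making precise that $\nu\in\bigcap_q\MMM_{\KK,q}$ really does land in the space $\Mac$ for which Proposition \ref{prop:decomposition} applies, and that the decomposition preserves both invariance and the quantitative hyperbolicity estimates (expansion rate $\bl$, transverse contraction on a set of relative measure $\beta$), so that the ergodic-component argument has something to work with. The analysis of standard pairs under the weak* limit — ensuring densities stay bounded below, scales stay bounded away from $0$, and the sets $H_{\JJ,q}$ behave well in the limit — is where the real work lies; the compactness from Proposition \ref{prop:R-cpt} does most of this, but identifying the limit measure's fine structure and matching it to the hypotheses of Proposition \ref{prop:decomposition} is the delicate step.
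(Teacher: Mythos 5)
Your proposal follows essentially the same route as the paper's proof: extract a weak* limit $\nu\in\bigcap_N\MMM_{\KK,N}\subset\Mac$ of the projections using Proposition \ref{prop:R-cpt} and the nesting $\MMM_{\KK,q_n}\subset\MMM_{\KK,N}$, apply the decomposition of Proposition \ref{prop:decomposition} to get an invariant $\mu^{(1)}\in\Mac$ with $\nu\leq\mu^{(1)}$, use the $\beta$-fraction sets $H_{\JJ,q}(W)$ to show the hyperbolic set has positive $\nu$-measure (hence positive $\mu^{(1)}$-measure), and conclude via an ergodic component lying in $\Mac\cap\Mh$, i.e.\ Proposition \ref{prop:MacSRB}. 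This is correct and matches the paper's argument, modulo minor imprecision in phrasing (e.g.\ positivity of exponents holds on the part of $\mu^{(1)}$ beneath $\nu$, not $\mu^{(1)}$-a.e., which is all the argument needs).
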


In this section we prove Theorem \ref{thm:buildSRB}, modulo two further technical results that we prove in \S\ref{sec:pf}.  The main idea is to define asymptotic versions of $\RRR_{\KK,N}$, $\RRR_{\KK,N}'$, $\MMM_{\KK,N}$ to help characterize the limiting measure $\mu = \lim \mu_n$ in relation to the set of SRB measures.  To this end, let
\begin{equation}\label{eqn:backcontr}
\RRR_\KK = \bigcap_{N\in\NN} \RRR_{\KK,N},\qquad
\RRR = \bigcup_\KK \RRR_\KK,
\end{equation}
where the union in the final definition is taken over all $\theta,\gamma,\kappa,r,C,\bl,\beta>0$, and we define $\RRR'_\KK$ and $\RRR'$ similarly.

Note that elements of $\RRR_\KK$ and $\RRR$ are genuine unstable manifolds (not just admissible), because we control the entire backwards trajectory.  We write $\MMM_\KK = \Phi(\MMM(\RRR_\KK'))$ for the collection of all measures on $\overline{U}$ that can be given in terms of $\KK$-uniform standard pairs.
In the proof, we will need another version of the absolute continuity properties of measures in $\MMM_\KK$.  Let
\begin{multline}\label{eqn:NR}
\NNN(\RRR) = \Big\{ E\subset U \mid \text{there is $\WWW \subset \RRR$ such that $E \subset \textstyle\bigcup_{W\in \WWW} W$} \\
\text{and } m_W(E) = 0 \text{ for all } W\in \WWW \Big\}.
\end{multline}
Writing $\MMM_f(\Lambda)$ for the set of all invariant measures on $U$ (which must all be supported on the attractor $\Lambda$), we consider
\begin{equation}\label{eqn:Mac}
\begin{aligned}
\Mac &= \{\mu\in \MMM(U) \mid \mu(E)=0 \text{ for all } E\in \NNN(\RRR)\}, \\
\Mh &= \{ \mu \in \MMM_f(\Lambda) \mid \text{all Lyapunov exponents of $\mu$ are non-zero}\}.
\end{aligned}
\end{equation}
Observe that elements of $\Mh$ are $f$-invariant, while elements of $\Mac$ may not be.  However, the collection $\Mac$ is invariant; if $\mu\in \Mac$, then $f_*\mu \in \Mac$.
The following proposition is proved in \S\ref{sec:MacSRB}.
\begin{proposition}\label{prop:MacSRB}
The intersection $\Mac\cap \Mh$ is precisely the set of SRB measures for $f$. 
\end{proposition}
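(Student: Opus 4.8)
The plan is to prove the two inclusions separately, using the standard characterization of SRB measures via the absolute continuity of conditional measures on unstable leaves (Definition \ref{def:SRB}), together with the fact that for a hyperbolic measure $\mu$ the local unstable manifolds $V^u(x)$ exist $\mu$-a.e., are genuine unstable manifolds, and hence belong to $\RRR$ once we choose the parameters $\KK$ large enough to accommodate the relevant regular set. First I would establish that an SRB measure lies in $\Mac \cap \Mh$. Such a $\mu$ is hyperbolic by hypothesis, so $\mu \in \Mh$. For the $\Mac$ part: fix $E \in \NNN(\RRR)$, so there is $\WWW \subset \RRR$ with $E \subset \bigcup_{W \in \WWW} W$ and $m_W(E) = 0$ for all $W \in \WWW$. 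On a regular set $Y_\ell$ of positive measure, Definition \ref{def:SRB} gives that the conditional measures $\mu^u(w)$ on the unstable leaves $V^u(w)$ are absolutely continuous with respect to $m_{V^u(w)}$; since these unstable leaves are themselves elements of $\RRR$ (for $\KK$ chosen in terms of $\ell$), the hypothesis $m_W(E)=0$ forces $\mu^u(w)(E)=0$ for a.e.\ $w$, and integrating over the partition $\xi$ and over $\ell$ gives $\mu(E) = 0$. Hence $\mu \in \Mac$.

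For the reverse inclusion, suppose $\mu \in \Mac \cap \Mh$; we must show $\mu$ is an SRB measure. Since $\mu \in \Mh$, it is a hyperbolic invariant measure, so Pesin theory applies: for $\mu$-a.e.\ $x$ there is a local unstable manifold $V^u(x) \subset \Lambda$, and the regular sets $Y_\ell$ exhaust $M$ up to a null set. Fix such a $Y_\ell$ of positive measure and the partition $\xi$ into unstable leaves $V^u(w)$, with conditional measures $\mu^u(w)$. The key point is that each $V^u(w)$, for $w$ in a regular set, is a genuine unstable manifold with uniformly controlled size, geometry, and backward contraction, hence lies in $\RRR_\KK$ for suitable $\KK$ (here one uses \eqref{eqn:backcontr} together with the absolute continuity of the holonomy / Lyapunov charts, standard in the Pesin-theoretic setup). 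Now I would argue by contradiction: if $\mu^u(w)$ were \emph{not} absolutely continuous with respect to $m_{V^u(w)}$ on a positive-measure set of $w$, then by the Lebesgue decomposition there is a set $E_w \subset V^u(w)$ with $m_{V^u(w)}(E_w) = 0$ but $\mu^u(w)(E_w) > 0$; piecing these together over the positive-measure set of bad $w$ (and over $\ell$) yields a set $E$ with $E \in \NNN(\RRR)$ — since $E$ is covered by leaves in $\RRR$ each of which assigns it zero leaf volume — yet $\mu(E) > 0$, contradicting $\mu \in \Mac$. Therefore $\mu^u(w)$ is absolutely continuous with respect to $m_{V^u(w)}$ for a.e.\ $w$ and every $\ell$, which is exactly the SRB condition of Definition \ref{def:SRB}.

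The main obstacle I anticipate is the measurability and uniformity bookkeeping required to genuinely place the Pesin-theoretic unstable leaves $V^u(w)$ inside $\RRR = \bigcup_\KK \RRR_\KK$: one must check that on a regular set $Y_\ell$ the leaves have size bounded below, H\"older curvature bounded above, transversality to the stable direction bounded below, and backward contraction controlled uniformly over $w \in Y_\ell$ — i.e.\ that they satisfy \eqref{eqn:gkadm1}, the defining condition of $\PPP_\II$, and the backward-contraction condition defining $\QQQ_{\JJ,N}$ for all $N$, so that they lie in $\bigcap_N \RRR_{\KK,N} = \RRR_\KK$. This is where the definitions of $\RRR_{\KK,N}$ and $\RRR_\KK$ were set up precisely so that genuine unstable manifolds appear in them; the remark following \eqref{eqn:backcontr} flags exactly this. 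A secondary, more routine point is to check that the gluing of the leafwise null sets $E_w$ into a single Borel set $E \in \NNN(\RRR)$ respects measurability — this follows from a measurable selection / Fubini argument for the partition $\xi$, using that the conditional measures depend measurably on the leaf. Both points are standard in the Pesin-theoretic / geometric-SRB literature, so I expect the proof to be short modulo citing the appropriate structural facts about regular sets and conditional measures from \cite{BP07,LY85}.
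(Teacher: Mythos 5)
Your proposal is correct and follows essentially the same route as the paper: the forward inclusion via the disintegration of an SRB measure into leafwise conditionals $\mu_W \ll m_W$ on local unstable manifolds (which lie in $\RRR$), and the reverse inclusion by contradiction, assembling the leafwise singular sets $E_W$ with $m_W(E_W)=0$, $\mu_W(E_W)>0$ into a single $E\in\NNN(\RRR)$ of positive $\mu$-measure. The uniformity and measurability bookkeeping you flag (placing Pesin unstable leaves of a regular set into some $\RRR_\KK$, and measurable selection of the $E_W$) is likewise treated as standard in the paper's argument.
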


Let $(\Mac)^\perp$ be the set of measures $\nu\in \MMM(U)$ such that $\nu \perp \mu$ for every $\mu\in \Mac$.  The following is proved in \S\ref{sec:decomposition}.

\begin{proposition}\label{prop:decomposition}
Given any measure $\mu \in \MMM(U)$, there are unique measures $\mu^{(1)}\in \Mac$ and $\mu^{(2)} \in (\Mac)^\perp$ such that $\mu = \mu^{(1)} + \mu^{(2)}$.
\end{proposition}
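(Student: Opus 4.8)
The statement is a Lebesgue-type decomposition with respect to the (non-$\sigma$-finite, non-measure) "ideal" $\NNN(\RRR)$ of leaf-null sets, so the plan is to realize $\Mac$ as the ``absolutely continuous part'' of a genuine Lebesgue decomposition against a single reference object. First I would reduce to a countable setting: the union in \eqref{eqn:backcontr} is over a continuum of parameters $\KK$, but one can extract a countable cofinal family $\KK_1 \subset \KK_2 \subset \cdots$ (taking rational parameter values, with $\theta,\gamma,\kappa,r$ shrinking and $C,\bl,\beta,L$ growing appropriately) such that $\RRR = \bigcup_m \RRR_{\KK_m}$ and $\NNN(\RRR) = \bigcup_m \NNN(\RRR_{\KK_m})$ up to the obvious monotonicity. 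So it suffices to handle each $\RRR_{\KK_m}$ and then take a limit.

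The key step is to build, for each fixed $\KK$, a reference measure $\lambda_\KK$ on $U$ such that for a measure $\nu$ one has $\nu \perp \lambda_\KK$ if and only if $\nu(E)=0$ for every $E\in\NNN(\RRR_\KK)$. The natural candidate is $\lambda_\KK = \Phi(\eta_\KK)$ for a suitable probability measure $\eta_\KK$ on $\RRR'_\KK$ whose marginal on $\RRR_\KK$ has full support (in the compact topology from Proposition \ref{prop:R-cpt}); then $\Phi(\eta_\KK)(E)=0$ forces $m_W(E)=0$ for $\eta_\KK$-a.e.\ $W$, and by full support this should propagate to all $W$ in a dense, hence (using some uniform Fubini/continuity of $W\mapsto m_W$ on the compact family) all $W\in\RRR_\KK$. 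Granting such $\lambda_\KK$, the classical Lebesgue decomposition of an arbitrary finite $\mu$ with respect to $\lambda_\KK$ gives $\mu = \mu^{\KK,\mathrm{ac}} + \mu^{\KK,\perp}$; I would then set $\mu^{(1)} = \sup_m \mu^{\KK_m,\mathrm{ac}}$ (an increasing sequence of measures bounded by $\mu$, so the sup/limit exists and is $\le\mu$) and $\mu^{(2)} = \mu - \mu^{(1)}$. One checks $\mu^{(1)}\in\Mac$ (it annihilates each $\NNN(\RRR_{\KK_m})$, hence their union $\NNN(\RRR)$) and $\mu^{(2)}\in(\Mac)^\perp$: if $\sigma\in\Mac$ then $\sigma$ is concentrated on a set that is $\lambda_{\KK_m}$-null for every $m$ in the relevant sense, so $\sigma\perp\mu^{\KK_m,\perp}$ for all $m$, and a diagonal argument gives $\sigma\perp\mu^{(2)}$. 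Uniqueness follows from the standard uniqueness of Lebesgue decompositions together with uniqueness of the $\Mac$/$(\Mac)^\perp$ splitting, which is automatic once existence is known: if $\mu^{(1)}+\mu^{(2)}=\tilde\mu^{(1)}+\tilde\mu^{(2)}$ are two such, then $\mu^{(1)}-\tilde\mu^{(1)} = \tilde\mu^{(2)}-\mu^{(2)}$ is simultaneously (a difference of measures) in $\Mac$ and mutually singular to everything in $\Mac$, forcing it to vanish.

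The main obstacle I anticipate is constructing $\lambda_\KK$ and verifying that singularity against it really characterizes annihilation of $\NNN(\RRR_\KK)$. Two technical points arise. First, one must choose $\eta_\KK$ with a marginal of full topological support on the compact space $\RRR_\KK$ while keeping $\Phi(\eta_\KK)$ a genuine finite measure on $U$; compactness of $\RRR'_\KK$ makes this routine (push forward a suitable probability measure, e.g.\ built from a countable dense set of standard pairs with summable weights and uniformly bounded densities). Second, and more delicate, is the equivalence: the easy direction is that $\nu\perp\lambda_\KK$ gives a $\lambda_\KK$-null Borel set carrying $\nu$, and intersecting with the leaves in $\supp\eta_\KK$ exhibits it (after a covering argument) as an element of $\NNN(\RRR_\KK)$; the converse, that annihilating all of $\NNN(\RRR_\KK)$ forces singularity, needs that $U$ can be covered (up to a set that is simultaneously $\lambda_\KK$-null and in $\NNN(\RRR_\KK)$) by countably many leaves from $\RRR_\KK$, or at least that the $\lambda_\KK$-absolutely continuous part of any $\mu$ lives on such a countable leaf-union. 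This is where the measurable structure of the family $\RRR_\KK$ (a measurable foliation-type lamination, locally a product of a leaf with a transversal) has to be used: one disintegrates $\lambda_\KK$ over its transversal and runs the classical Lebesgue decomposition leafwise, assembling the pieces via a measurable selection. I would organize the proof so that this disintegration/selection is the single substantive lemma, with everything else reduced to bookkeeping and the two cited decomposition principles.
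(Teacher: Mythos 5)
Your construction of $\mu^{(1)}$ does not produce a measure whose complement lies in $(\Mac)^\perp$, because it implicitly identifies $\Mac$ with ``absolutely continuous with respect to some reference measure built from leaf volumes,'' and that identification is false. By \eqref{eqn:NR}, every set in $\NNN(\RRR)$ is contained in a union of leaves of $\RRR$; consequently \emph{any} measure giving zero weight to $\bigcup_{W\in\RRR}W$ belongs to $\Mac$, even though it is singular with respect to every measure of the form $\lambda_\KK=\Phi(\eta_\KK)$. Concretely, take $\mu=\delta_p$ where $p$ is the indifferent fixed point in the example of \S\ref{sec:katok} (an invariant measure on $\Lambda$, so well inside the intended scope), or simply a point mass in $U\setminus\Lambda$: no leaf of $\RRR$ passes through such a point, so $\mu\in\Mac$ and the correct decomposition is $\mu^{(1)}=\mu$, $\mu^{(2)}=0$; but $\mu\perp\lambda_{\KK_m}$ for every $m$, so your recipe returns $\mu^{(1)}=\sup_m\mu^{\KK_m,\mathrm{ac}}=0$ and $\mu^{(2)}=\mu$, which is not in $(\Mac)^\perp$ since $\mu\in\Mac$ and $\mu\not\perp\mu$. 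The claimed equivalence ``$\nu\perp\lambda_\KK$ iff $\nu(E)=0$ for all $E\in\NNN(\RRR_\KK)$'' is also false as stated: $\delta_x$ with $x$ on a leaf $W\in\RRR_\KK$ is singular with respect to the nonatomic measure $\lambda_\KK$, yet it charges $\{x\}\in\NNN(\RRR_\KK)$. The step where your $\lambda_\KK$-singular part is declared orthogonal to all of $\Mac$ (``$\sigma\in\Mac$ is concentrated on a $\lambda_{\KK_m}$-null set'') conflates membership in $\Mac$, which only means annihilating the leafwise null sets, with singularity/absolute continuity relative to $\lambda_{\KK_m}$; these classes are genuinely different, and no countable family of reference measures can capture $\Mac$ in this way. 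The auxiliary steps (propagating $m_W(E)=0$ from $\eta_\KK$-a.e.\ $W$ to all $W$ by full support, and disintegrating over a transversal as though $\RRR_\KK$ were a lamination) are likewise unjustified -- null sets are not continuous in $W$, and the leaves of $\RRR_\KK$ need not form a partition -- but they are secondary to the structural problem above.

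The paper's proof needs none of this machinery. It establishes an abstract Lebesgue-type decomposition (Lemma \ref{lem:decomposition}): for any ``candidate collection of null sets'' $\NNN$ -- a family closed under subsets and countable unions -- and $\SSS=\SSS(\NNN)$ as in \eqref{eqn:SN}, one has $\MMM=\SSS\oplus\SSS^\perp$. The existence part is a direct exhaustion: set $\theta=\sup\{\nu(E):E\in\NNN\}$, choose $E_n\in\NNN$ with $\nu(E_n)\to\theta$, put $A=\bigcup_nE_n\in\NNN$, and verify that $\nu|_{X\setminus A}\in\SSS$ (by maximality of $\theta$) and $\nu|_A\in\SSS^\perp$ (since every $\mu\in\SSS$ vanishes on $A$). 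Proposition \ref{prop:decomposition} then follows because $\NNN(\RRR)$ is such a collection and $\Mac=\SSS(\NNN(\RRR))$; no reference measure, compactness, or countable cofinal family of parameters enters. Your uniqueness argument is fine and coincides with the paper's ($\SSS\cap\SSS^\perp=\{0\}$), but the existence construction should be replaced by this exhaustion argument; the only thing specific to $\RRR$ that then needs checking is that $\NNN(\RRR)$ is closed under subsets and countable unions.
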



\begin{remark}
The collections $\Mac$ and $\MMM_\KK$ are built using two different notions of absolute continuity.\footnote{This is comparable to the situation in \cite[\S8.6]{BP07}, where one can define absolute continuity using either null sets, or densities, or holonomy maps.}  The criterion for inclusion in $\Mac$ is that a measure gives zero weight to a certain collection of null sets defined using the reference measures $m_W$, while the criterion for inclusion in $\MMM_\KK$ is that a measure be defined in terms of integrating densities against measures $m_W$.  The second criterion immediately implies the first, so $\MMM_\KK \subset \Mac$. 
\end{remark}

Now we prove Theorem \ref{thm:buildSRB}, taking Propositions \ref{prop:MacSRB} and \ref{prop:decomposition} as given.  Suppose that we have measures $\mu_n\to \mu\in \MMM(\Lambda,f)$ with uniformly large projections onto $\MMM_{\KK,q_n}$; that is, there are $q_n\to\infty$ and $\nu_n = \Phi(\eta_n)\in \MMM_{\KK,q_n}$ such that $\nu_n\leq \mu_n$ and $\|\nu_n\| \geq \delta > 0$ for every $n$.  

By Proposition \ref{prop:decomposition}, there is a unique decomposition $\mu = \mu^{(1)} + \mu^{(2)}$ such that $\mu^{(1)}\in \Mac$ and $\mu^{(2)} \in (\Mac)^\perp$.  Note that $\Mac$ and $(\Mac)^\perp$ are both $f_*$-invariant: thus $f$-invariance of $\mu$ gives $\mu = f_*\mu = f_*\mu^{(1)} + f_*\mu^{(2)}$, and uniqueness of the decomposition gives $f_*\mu^{(1)}=\mu^{(1)}$ and $f_*\mu^{(2)}=\mu^{(2)}$.

First we show that $\mu^{(1)}$ is non-trivial, using the measures $\nu_n$.    Note that
\begin{equation}\label{eqn:nun}
\nu_n \in \MMM_{\KK,q_n} \subset \MMM_{\KK,N} \text{ whenever } q_n \geq N,
\end{equation}
and also that $q_n\to\infty$.  Each $\nu_n$ is contained in $\MMM_{\KK,1}$, which is weak* compact by Proposition \ref{prop:R-cpt}, so there is a convergent subsequence $\nu_{n_k} \to \nu$.  For every $N$, we see from \eqref{eqn:nun} that $\nu_{n_k}\in \MMM_{\KK,N}$ for sufficiently large $k$, and thus $\nu\in \MMM_{\KK,N}$.  In particular, $\nu\in \MMM_{\KK} \subset \Mac$.
The relation $\nu_n\leq \mu_n$ passes to the limit, and we get $\nu\leq \mu$.  Because $\mu^{(2)}\in (\Mac)^\perp$ and $\nu\in  \Mac$, we conclude that $\nu\leq \mu^{(1)}$.  Now $\mu^{(1)}$ is non-trivial because $\|\nu\|\geq \delta > 0$.

Now we show that some ergodic component $\zeta$ of $\mu^{(1)}$ is an SRB measure.  By Proposition \ref{prop:MacSRB} it suffices to show that some $\zeta$ is in both $\Mh$ and $\Mac$.  In fact, since any ergodic component $\zeta$ of $\mu^{(1)}$ has $\zeta\ll \mu^{(1)}$, we see that \eqref{eqn:NR}--\eqref{eqn:Mac} give $\zeta \in \Mac$, and to complete the proof of Theorem \ref{thm:buildSRB}, it suffices to show that some ergodic component $\zeta$ of $\mu^{(1)}$ is hyperbolic.

To this end, 
let $E$ be the set of Lyapunov regular points for which all Lyapunov exponents are non-zero; we claim that $\mu^{(1)}(E) > 0$.  Indeed, for every $(W,\rho)\in \RRR'_{\KK}$, \eqref{eqn:RRR} gives $m_W(E) \geq \beta \|m_W\|$, and the bounds on $\rho$ give
\[
\Psi(W,\rho)(E) = \int_{E\cap W} \rho(x)\,dm_W(x) \geq \frac\beta L \|m_W\|
\geq \frac\beta{L^2} \int_W \rho(x)\,dm_W(x),
\]
where $\Psi$ is as in \eqref{eqn:Phi1}.
Since $\nu = \Phi(\eta)$ for some $\eta\in \MMM(\RRR'_\KK)$, we have
\[
\nu(E) = \int_{\RRR'_\KK} \Psi(W,\rho)(E) \,d\eta \geq \beta L^{-2} \int_{\RRR'_\KK} \|\Psi(W,\rho)\| \,d\eta = \beta L^{-2} \|\nu\| > 0.
\]
Recalling that $\mu^{(1)} \geq \nu$, this implies that some ergodic component $\zeta$ of $\mu^{(1)}$ has $\zeta(E)>0$.  By ergodicity this gives $\zeta(E)=1$, hence $\zeta$ is hyperbolic.  In particular, we obtain $\zeta\in \Mh \cap \Mac$, and Proposition \ref{prop:MacSRB} shows that $\zeta$ is an SRB measure.

\section{Proof of intermediate technical results}\label{sec:pf}

\subsection{Proof of Proposition \ref{prop:reduction}}\label{sec:reduction}

The angle between the cones $K^s(x), K^u(x)$ is given by the measurable function $\theta(x)$.  Let $X_n = \{x\in A \mid \theta(x) > 1/n\}$, and observe that $S \subset \bigcup_{n\geq 1} X_n$, so there exists $n$ such that $\Leb(S \cap X_n) > 0$.  By restricting $S$ to include only points in $X_n$, we may assume without loss of generality that $\theta(x)\geq\bar\theta = \frac 1n$ for all $x\in S$, while still guaranteeing that $\Leb S > 0$.

By again decreasing $S$ to a smaller set that still has positive Lebesgue measure, we will obtain a foliation of a neighborhood of $S$ such that every leaf $V(x)$ of the foliation is tangent to $K^u(x)$ -- that is, $T_x V(x) \subset K^u(x)$ for every $x\in S$.

To this end, for every $z\in M$ we fix a neighborhood $0\in B_z \subset T_z M$ such that $\exp_x^{-1}$ is well-defined on $\exp_z(B_z)$ for every $x\in \exp_z(B_z)$.  Let $\pi_{z,x} := \exp_x^{-1} \circ \exp_z\colon B_z \to T_x M$, and observe that $\pi_{z,x} = I_{z,x} + g_{z,x}$, where $I_{z,x} = D\pi_{z,x}$ is an isometry and where $g_{z,x}$ is a smooth map (as smooth as the manifold) with $D g_{z,x}(0) = 0$.  

As $d(z,x)$ goes to zero, the quantity $\|g_{z,x}\|_{C^2}$ goes to zero as well.  It follows that we can choose for every $z\in U$ a neighborhood $\Omega_z$ such that
\begin{enumerate}
\item $\exp_x^{-1}$ is well-defined on $\Omega_z$ for all $x\in \Omega_z$;
\item $\|g_{x,z}\|_{C^2} \leq \bar\theta/2$ for all $x\in \Omega_z$.
\end{enumerate}
Now we choose a finite set $E \subset U$ such that the neighborhoods $\{\Omega_z \mid z\in E\}$ cover the entire trapping region $U$.  Thus we can fix $z\in E$ such that $\Leb(S \cap \Omega_z) > 0$.

Let $\GGG_z^{d_u}$ denote the collection of $d_u$-dimensional subspaces of $T_z M$, endowed with the metric given by angle (or equivalently the Hausdorff metric on the intersections of subspaces with the unit sphere).  Given $P\in \GGG_z^{d_u}$, denote by $E_x^P$ the affine subspace of $T_z M$ passing through $\exp_z^{-1}(x)$ parallel to $P$.  Given $x\in \Omega_z \cap S$, let
\[
\Delta_x = \{P\in \GGG_z^{d_u} \mid T_x \exp_z (E_x^P \cap B_z) \subset K^u(x) \};
\]
that is, $\Delta_x$ comprises those $d_u$-dimensional subspaces $P\subset T_zM$ for which the manifold $\pi_{z,x} (E_x^P\cap B) \subset T_x M$ has a tangent space at $0$ that lies in the unstable cone $K^u(x)$.

Each such $\Delta_x$ contains a ball of radius $\bar{\theta}/2$ in $\GGG_z^{d_u}$.  To see this, consider the cone $I_{z,x}^{-1} K^u(x) \subset T_z M$, which contains a ball of radius $\bar{\theta}$ in $\GGG_z^{d_u}$, and observe that if $P\in \GGG_z^{d_u}$ lies in this cone and is at least a distance of $\bar{\theta}/2$ from its boundary, then $P\in \Delta_x$ since $\|g_{z,x}\|_{C^2} \leq \bar{\theta}/2$.

Now let $F\subset \GGG_z^{d_u}$ be a finite $\bar{\theta}/2$-dense set, and given $P\in F$, let $X_P = \{x\in \Omega_z \cap S \mid P\in \Delta_x\}$.  Then $\bigcup_{P\in F} X_P = S \cap \Omega_z$, and by finiteness, there exists $P\in F$ such that $\Leb X_P > 0$.  

It follows that by restricting $S$ to include only points in $X_P$, we may assume that $T_x \exp_z (E_x^P\cap B_z) \subset K^u(x)$ for every $x\in S$.  Denote by $\xi=\xi(P)$ the foliation of $\Omega_z$ whose leaves are $V(x) := \exp_z(E_x^P\cap B_z)$.

To summarize, we assume without loss of generality that
\begin{enumerate}
\item $S \subset \Omega_z \ni z$, where $\exp_z^{-1}$ is well-defined on the open set $\Omega_z$;
\item there is a $d_u$-dimensional subspace $P\subset T_z M$ so that $\exp_z(E_x^P\cap B_z)$ is $(\gamma,1)$-admissible at $x$ for all $x\in S$.  In particular, the foliation of $T_z M$ by $d_u$-dimensional affine subspaces parallel to $P$ projects under $\exp_z$ to a foliation $\xi$ of $\Omega_z$ such that for every $x\in S$, the leaf $V(x)$ of $\xi$ passing through $x$ is $(\gamma,1)$-admissible and $T_x V(x) \subset K^u(x)$.
\end{enumerate}
Now we have a smooth foliation $\xi$ such that every leaf $V$ of $\xi$ is in $\PPP_{(\bar\theta, \bar\gamma, 1, \bar{r})}$.  There are density functions $\rho_V \in L^1(V,m_V)$ for every $V\in \xi$ and a measure $\eta \in \MMM(\PPP_{(\bar\theta,\bar\gamma,1,\bar r)})$ such that
\[
\Leb(S) = \int_\PPP \int_{V \cap S} \rho_V(x) \,dm_V(x) \,d\eta(V).
\]
Since $\Leb S>0$, there exists $W\in \xi \subset \PPP_{(\bar\theta,\bar\gamma,1,\bar r)}$ such that $m_{W}(S)>0$, where we restrict $S$ to include only those points $x$ for which $T_x W \in K^u(x)$. This proves Proposition \ref{prop:reduction}.

\subsection{Proof of Lemma \ref{lem:real-eht}}\label{sec:real-eht}

For Lemma \ref{lem:real-eht} we need to use a version of the Hadamard--Perron theorem from 
 \cite{CP16}, which we now describe.
 Let $\Omega\subset \RR^d$ be a neighborhood of the origin, and $f_n \colon \Omega \to \RR^d$ a sequence of $C^{1+\alpha}$ diffeomorphisms onto their images; ultimately we will take $f_n$ to be the representation of $f$ in local coordinates around $f^n(x), f^{n+1}(x)$ along a trajectory.  For now we just assume that $\|Df_n\|$, $\|Df_n^{-1}\|$, and 
$|Df_n|_\alpha$ are uniformly bounded by some constant $e^L$,\footnote{This is distinct from the $L$ that appears in the collection of constants $\KK$ and in \eqref{eqn:RRR}, which plays no role in the proof of Lemma \ref{lem:real-eht}.  We use $L$ here for consistency with \cite{CP16}.}
and that there is a sequence of splittings $\RR^d = E_n^u \oplus E_n^s$ that determine a $Df_n(0)$-invariant sequence of cones $K_n^u,K_n^s$; that is,
$\overline{Df_n(0)(K_n^u)}\subset K_{n+1}^u$ and 
$\overline{Df_n(0)^{-1}(K_{n+1}^s)} \subset K_n^s$ for each $n$.  We stress that there is no invariance condition on the subspaces $E_n^{u,s}$ themselves; see \cite[Remark 2.2]{CP16}.
We also assume that there is
$L'>0$ such that the angle $\theta_n$ between $K_n^u$ and $K_n^s$ satisfies $\theta_{n+1}\geq e^{-L'}\theta_n$ for all $n$.
As in \eqref{eqn:luls} and \eqref{eqn:defect}, consider
\begin{equation}\label{eqn:luls2defect}
\begin{aligned}
\lambda_n^u &= \inf \{\|Df_n(0)(v)\| \mid v\in K_n^u, \|v\|=1\},\\
\lambda_n^s &= \sup \{\|Df_n(0)(v)\| \mid v\in K_n^s, \|v\|=1\},\\
\Delta_n &= \tfrac 1\alpha \max(0,\ \lambda_n^s - \lambda_n^u).
\end{aligned}
\end{equation}
Let $L'' = \max\left( \frac{L'}\alpha,L(1+\frac2\alpha)\right)$.  Fix $\bar\theta>0$ and let
\begin{equation}\label{eqn:lne}
\lambda_n^e = \begin{cases}
\lambda_n^u - \Delta_n & \theta_n \geq \bar\theta,\\
-L'' & \theta_n<\bar\theta.
\end{cases}
\end{equation}
The following theorem is a consequence of \cite[Theorem A]{CP16}. 

\begin{thm}\label{thm:HP2}
Given $L,L',\bar\theta,\bl>0$ there are $\bar{\gamma},\bar{\kappa},\bar{r},\delta>0$ such that the following is true.  If $\theta_0\geq \bar\theta$ and $n\in\NN$ is such that
\begin{equation}\label{eqn:ehHP2}
\sum_{j=k}^{n-1}\lambda_k^e \geq (n-k)\bl \text{ for all }0\leq k<n,
\end{equation}
then $\theta_n\ge\ba$; moreover, if $\psi_0\colon B_{E_0^u}(\eta)\to E_0^s$ is an arbitrary $C^{1+\alpha}$ function with $\psi_0(0)=0$, $D\psi_0(0)=0$, and $|D\psi_0|_\alpha\leq\bar\kappa$, then for each $n$ satisfying \eqref{eqn:ehHP2} there exists a $C^{1+\alpha}$ function 
$\psi_n\colon B_{E_n^u}(\bar{r}) \to E_n^s$ satisfying the following conditions:
\begin{enumerate}
\item $\psi_n(0)=0$, $D\psi_n(0)=0$;
\item $\|D\psi_n\| \leq \bar\gamma$ and $|D\psi_n|_\alpha\le\bar\kappa$;
\item the graph of $\psi_n$ is the connected component of $F_n(\graph(\psi_0))\cap B_{E_n^u}(\bar{r}) \times B_{E_n^s}(\bar\gamma \bar{r})$ containing the origin;
\item writing $F_n = f_{n-1} \circ\cdots\circ f_1 \circ f_0$, if 
$F_n(x),F_n(y) \in\graph(\psi_n)$, then for every $0\leq k\le n$, we have
\begin{equation}\label{eqn:hyptimeHP2}
\|F_n(x) - F_n(y)\| \geq e^{(n-k)\bl} \|F_k(x) - F_k(y)\|.
\end{equation}
\end{enumerate}
\end{thm}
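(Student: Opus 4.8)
The plan is to deduce Theorem \ref{thm:HP2} from \cite[Theorem A]{CP16} by reformulating the hypotheses. The theorem in \cite{CP16} is stated in terms of a sequence of maps $f_n$ with a $Df_n(0)$-invariant sequence of cones, together with an ``effective hyperbolicity'' hypothesis on the orbit. Our job here is essentially bookkeeping: to show that condition \eqref{eqn:ehHP2}, phrased using the single sequence $\lambda_n^e$ from \eqref{eqn:lne}, is equivalent to (or implies) whatever ``effective hyperbolicity at time $n$'' condition appears in \cite{CP16}, and that the conclusions match up after translating notation. The constants $\bar\gamma,\bar\kappa,\bar r,\delta$ will come directly from the corresponding constants in \cite[Theorem A]{CP16} applied with parameters $L, L'', \bar\theta, \bl$ (note the appearance of $L'' = \max(L'/\alpha, L(1+2/\alpha))$, which is the combined bound controlling both the maps and the rate of angle degeneration).

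First I would recall the precise statement of \cite[Theorem A]{CP16}, identifying its hypotheses: uniform bounds $e^L$ on $\|Df_n\|$, $\|Df_n^{-1}\|$, $|Df_n|_\alpha$; the invariant cone sequence $K_n^{u,s}$ with angles satisfying $\theta_{n+1} \ge e^{-L'}\theta_n$; and the effective-hyperbolicity sum condition. The key translation step is to check that the quantity $\lambda_n^e$ defined in \eqref{eqn:lne} — which equals $\lambda_n^u - \Delta_n$ when the angle is large and is set to the fixed negative value $-L''$ when the angle has degenerated below $\bar\theta$ — is exactly the per-step contribution that \cite{CP16} uses to measure effective hyperbolicity. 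When $\theta_n < \bar\theta$ the ``penalty'' $-L''$ accounts both for the worst-case defect from domination (bounded by $L(1+2/\alpha)$ given the derivative bounds) and for the worst-case drop in the angle ($L'/\alpha$ per step), so that once \eqref{eqn:ehHP2} holds, the cumulative estimates in \cite{CP16} apply with no further loss. Granting this, the conclusion that $\theta_n \ge \bar\theta$ and the existence of the graph $\psi_n$ with the stated $C^{1+\alpha}$ bounds and the expansion estimate \eqref{eqn:hyptimeHP2} are all immediate transcriptions of the conclusions in \cite[Theorem A]{CP16}.

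The one genuine point requiring care — and the main obstacle — is the reduction from the linear cone condition in the hypothesis (invariance under $Df_n(0)$, and $\lambda_n^{u,s}$ defined via $Df_n(0)$) to statements about the nonlinear maps $F_n = f_{n-1}\circ\cdots\circ f_0$ and their action on graphs; this is precisely where the H\"older bound $|Df_n|_\alpha \le e^L$ and the smallness of the admissible-manifold parameters enter, and it is handled inside \cite{CP16}. I would not reprove it here; instead I would simply verify that our choice of $L''$ is large enough that the hypotheses of \cite[Theorem A]{CP16} are met, cite that theorem, and note that its conclusions give exactly items (1)--(4). Finally I would remark that the statement is applied along an orbit of $f$ by taking $f_n$ to be $f$ written in local coordinates (via $\exp$) around $f^n(x), f^{n+1}(x)$, with $K_n^{u,s}$ the given cone families and $L, L'$ chosen from compactness of $\overline U$ and the standing assumptions; this is the form in which it feeds into the proof of Lemma \ref{lem:real-eht}.
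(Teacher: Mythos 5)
Your proposal matches the paper's treatment: the paper likewise gives no independent argument but sets up the sequence $f_n$, the uniform bounds $e^L$, the cone invariance, the angle condition $\theta_{n+1}\geq e^{-L'}\theta_n$, the constant $L''=\max(L'/\alpha,\,L(1+\tfrac2\alpha))$ and the quantity $\lambda_n^e$, and then states Theorem \ref{thm:HP2} as a direct consequence of \cite[Theorem A]{CP16}, exactly the translation-and-citation route you describe. Your identification of the role of $-L''$ (absorbing both the worst-case defect and the worst-case angle drop when $\theta_n<\bar\theta$) and of how the result is later applied along orbits in local coordinates is consistent with the paper.
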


\begin{remark}\label{rmk:HP2}
The key property of times satisfying the hypotheses of Theorem \ref{thm:HP2} is that not only are the infinitesimal dynamics along the first $n$ iterates uniform, but so are the (finite-scale) geometry and dynamics of manifolds close to the unstable direction.  The inclusion of the quantity $\Delta_n$ in the expression for $\lambda_n^e$ is crucial for this; when $\lambda_n^s > \lambda_n^u$, the curvature of the image $F_n(\graph(\psi_0))$ (that is, the quantity $|D\psi_n|_\alpha$) can increase, and we are forced to `cut off' part of the image in order to control $\|D\psi_n\|$; the amount that we cut off is controlled by $\Delta_n$, and in order to return to size $\bar{r}$ we must wait until the expansion given by $\lambda_n^u$ overcomes the defect introduced by $\Delta_n$.
\end{remark}

Given $W$ and $\hat{S}$ as in the hypothesis of Theorem \ref{thm:main3}, for each $x\in \hat {S} \cap W$ with $T_x W \subset K^u(x)$ we want to apply Theorem~\ref{thm:HP2} to the sequence of maps $T_{f^n(x)}M \to T_{f^{n+1}(x)}M$ given by writing $f$ in local coordinates, choosing $E_n^u = T_{f^n(x)}(f^n(W))$, and taking $E_n^s$ to be any subspace in $K^s(f^n(x))$.  Note that the existence of $L,L'$ satisfying the uniformity bounds above comes immediately from compactness of $\overline{U}$ and smoothness of $f$.

In order to apply Theorem \ref{thm:HP2}, we need to consider points $x\in W$ at which $T_xW \subset K^u(x)$ and $\theta(x)$ is sufficiently large.  
To do this, consider for each $M\in \NN$ the set
\begin{multline*}
S_M = \{x\in \hat S \cap W \mid T_x W \subset K^u(x), \theta(x)\geq \tfrac 1M, 
\text{ and } \\
\ld(\Gamma_\bl^e(x) \cap \Gamma_{\JJ,q}^s(x)) \geq \tfrac 1M  \text{ for all } q\in \NN.\}
\end{multline*}
By the hypothesis of Theorem \ref{thm:main3}, we have $m_W(\bigcup_M S_M) > 0$, and so there is $M$ with $m_W(S_M) > 0$.

If $x\in W$ is such that $\theta(x) < \frac 1M$ or $T_x W \not\subset K^u(x)$, then we put $\Gamma^{e'}_\bl(x) = \emptyset$.  For every other $x\in W$ -- that is, whenever $\theta(x) \geq \frac 1M$ and $T_x W \subset K^u(x)$ -- we put
\begin{equation}\label{eqn:Gammane'}
\Gamma_{\bar\lambda}^{e'}(x) = \bigg\{ n \mid \sum_{j=k}^{n-1} \lambda_j^e \geq \bl(n-k) \text{  for all }0\leq k<n \bigg\}.
\end{equation}
This is exactly the set of times $n\in \NN$ at which \eqref{eqn:ehHP2} holds and Theorem \ref{thm:HP2} can be applied.  Note that $\lambda^u - \Delta \leq \lambda$ and so $\Gamma_\bl^{e'}(x) \subset \Gamma_\bl^e(x)$, but the containment may be proper since whenever $\theta(f^j(x)) < \bar\theta$ we have $\lambda_j^e = -L''$, which may be less than $\lambda_j^u - \Delta_j$.  Nevertheless, 
by \cite[Proposition 9.3]{CP16}, there is $\bar\theta \in (0, \tfrac 1M]$ such that 
\begin{equation}\label{eqn:eh4}
\ld\left(\Gamma^{e'}_\bl(x) \cap \Gamma^s_{\JJ,q}(x)\right) > \tfrac 1{2M} \text{ for every } q\in \NN \text{ and } x\in S_M.
\end{equation}
For this value of $\bar\theta$, let $\bar\gamma,\bar\kappa,\bar{r},\delta$ be as in Theorem \ref{thm:HP2}.  Writing $\II = (\bar\theta,\bar\gamma,\bar\kappa,\bar{r})$, it follows from Theorem \ref{thm:HP2} that for every $n\in \Gamma^{e'}_\bl(x)$ there is $W_n^x \subset W$ such that $f^nW_n^x \in \PPP_\II \cap \QQQ_{\JJ,n}$, where the geometric bounds for $\PPP_\II$ come from the first three conclusions of Theorem \ref{thm:HP2}, and the dynamical bound for $\QQQ_{\JJ,n}$ comes from \eqref{eqn:hyptimeHP2}.  This bound also shows that $W_n^x \subset B(x,\bar{r}e^{-\bl n})$, which proves the first part of Lemma \ref{lem:real-eht}.

For the second part of Lemma \ref{lem:real-eht}, fix $q,m\in \NN$ and let
\[
X_m^q = \{x \mid \tfrac 1n \#([0,n) \cap \Gamma_q^\JJ(x)) \geq \tfrac 1{2M} \text{ for every } n\geq m\}.
\]
By \eqref{eqn:eh4}, we have $\bigcup_m X_m^q = S_M$ for every $q$, and in particular, there is $N(q)$ such that $m_W(X_{N(q)}^q) \geq \frac 12 m_W(S_M)$.  Thus for every $n\geq N(q)$ we have
\begin{multline*}
\int_W \frac 1n \#([0,n) \cap \Gamma_q^\JJ(x)) \,d m_W(x)
\geq \int_{X_{N(q)}^q} \frac 1n \#([0,n) \cap \Gamma_q^\JJ(x))\,dm_W(x)
\\
\geq \tfrac 1{2M} m_W(X_{N(q)}^q) \geq \tfrac 1{4M} m_W(S_M) > 0.
\end{multline*}
Taking $\delta = \frac 1{4M} m_W(S_M)$ completes the proof of Lemma \ref{lem:real-eht}.

\subsection{Proof of Proposition \ref{prop:R-cpt}}\label{sec:R-cpt}

Continuity of $\Psi\colon\RRR_{\KK,N}'\to \MMM(U)$ is immediate, 
and also implies continuity of $\Phi\colon \MMM(\RRR_{\KK,N}')\to\MMM(U)$.  We show that $\RRR_{\KK,N}'$ is compact.

First we note that each of the geometric conditions for inclusion in $\PPP_{\II}$ is compact -- that is, given $(x_k,G_k,F_k,\psi_k)$ such that \eqref{eqn:PPP} is satisfied, compactness of the Grassmanian for $M$ guarantees existence of a subsequence for which $(x_k,G_k,F_k)$ converges to $(x,G,F)$; the uniform bound on the angle guarantees that we still have $T_x M=G\oplus F$ in the limit; and the Arzel\`a--Ascoli theorem guarantees that we can get $C^1$ convergence of $\psi_k$ by passing to a further subsequence.  Moreover, the limiting function $\psi$ is $C^{1+\alpha}$ and satisfies the same bounds in terms of $\gamma$ and $\kappa$.

Using the Arzel\`a--Ascoli theorem again gets a subsequence for which the densities $\rho_k$ converge (in $C^0$) to $\rho\in C^\alpha(W,[1/L,L])$ satisfying $|\rho|_\alpha\leq L$.  Thus to verify compactness of $\RRR_{\KK,N}'$, it remains only to verify that when $W_k\in \RRR_{\KK,N}$ is a sequence with $W_k \to W \in \PPP_\II$ in the sense of the previous paragraph, then $W\in \RRR_{\KK,N}$ as well.  This requires us to 
check the dynamical conditions \eqref{eqn:QQQ}, \eqref{eqn:HCLN}, and \eqref{eqn:RRR} controlled by $C,\bl,N,\beta$.

It is straightforward that \eqref{eqn:QQQ} passes to the limit, so if $W_k\to W$ and $W_k\in \QQQ_{\JJ,N}$, then $W\in \QQQ_{\JJ,N}$ as well.  For inclusion in $\RRR_{\KK,N}$, we observe that if $W_k\to W$ and $x_k\in H_{\JJ,N}(W_k)$ are such that $x_k\to x$, then  $x\in H_{\JJ,N}(W)$.  In other words,
\[
H_{\JJ,N}(W) \supset \bigcap_{m\in\NN} \overline{\bigcup_{k\geq m} H_{\JJ,N}(W_k)}.
\]
Write $Y_m = \overline{\bigcup_{k\geq m} H_{\JJ,N}(W_k)}$, so $Y_m$ is a nested sequence of compact sets.  

Let $\nu_W = m_W/\|m_W\|$, and similarly for $\nu_{W_k}$.  Compactness of $Y_m$ and the fact that $\nu_{W_k}\to \nu_W$ (in the weak* topology) guarantees that $\nu_W(Y_m)\geq \ulim_{k\to\infty} \nu_{W_k}(Y_m) \geq\beta$, where we have used the fact that $Y_m\supset H_{\JJ,N}(W_k)$ for all $k\geq m$ and the condition in \eqref{eqn:RRR}.  Because the $Y_m$ are nested and we have $\nu_W(Y_m)\geq \beta$ for every $m$, we conclude that $\nu_W(H_{\JJ,N}(W)) \geq \beta$, which completes the proof that $(W,\rho)\in \RRR_{\KK,N}'$.

Now we have shown that $\MMM_{\leq 1}(\RRR_{\KK,N}')$ is weak* compact, and continuity of $\Phi$ completes the proof that $\MMM_{\KK,N}$ is weak* compact.

\subsection{Proof of Lemma \ref{lem:eht2}}\label{sec:pf-eht2}

The first part of the proof of Lemma \ref{lem:eht2} is to find and control the density function for $f_*^n m_{W_n^x}$.  Then we will use this to estimate $m_{f^nW_n^x}(H_{\JJ,q}(f^nW_n^x))$ using \eqref{eqn:mWnx}.

Given $y\in W_n^x$, let
\begin{equation}\label{eqn:phiny}
\phi_n(y) = \det (Df^n)(y)|_{T_y W}
\end{equation}
and define $\rho_n^x\in C(f^nW_n^x,\RR^+)$ by
\begin{equation}\label{eqn:rhonx}
\rho_n^x(z) = \frac{\phi_n(x)}{\phi_n(f^{-n}(z))}.
\end{equation}
It follows immediately that
\begin{equation}\label{eqn:densities}
\frac{d (f_*^n m_{W_n^x})}{d m_{f^nW_n^x}} =
\frac{\rho_n^x}{\phi_n(x)}.
\end{equation}
We will show that $\rho_n^x$ is a well-behaved function.

\begin{lemma}\label{lem:bdd-distortion}
There exists $L>0$ such that if $x\in S_{\JJ,q,n}$ and $W_n^x$ is as above, then $\rho_n^x(z) \in [\frac 1L,L]$ for all $z\in f^n(W_n^x)$, and $|\rho_n^x|_\alpha \leq L$.
\end{lemma}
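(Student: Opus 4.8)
The plan is to obtain the bounded distortion estimate by summing the infinitesimal distortion of $Df$ along the orbit, exploiting the uniform contraction at the time-$n$ scale that is built into $W_n^x$. First I would recall from \eqref{eqn:rhonx} that
\[
\log \rho_n^x(z) = \sum_{k=0}^{n-1} \Big( \log \big| Df|_{T_{f^k(x)}(f^kW)} \big| - \log \big| Df|_{T_{f^k(f^{-n}z)}(f^kW)} \big| \Big),
\]
so that $\log \rho_n^x(z)$ is a telescoping-free sum of differences of $\log$-Jacobians of $Df$ restricted to the (one-$d_u$-dimensional) tangent planes of the iterates $f^k(W_n^x)$ at the two points $f^k(x)$ and $f^k(f^{-n}z)$. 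Because $f$ is $C^{1+\alpha}$ on the compact set $\overline U$, there is a constant $C_1$ such that $\big|\log| Df|_V| - \log| Df|_{V'}|\big| \le C_1\big( d(y,y')^\alpha + \measuredangle(V,V')^\alpha\big)$ whenever $y,y'$ are nearby points and $V\subset T_yM$, $V'\subset T_{y'}M$ are $d_u$-planes (here I would use local coordinates and the Hölder continuity of $Df$ together with Lipschitzness of the map $(y,V)\mapsto \log|Df|_V|$ in the Grassmannian variable). So the key is to bound, for each $0\le k\le n$, both (i) the distance $d(f^k(x), f^k(f^{-n}z))$ and (ii) the angle between the tangent planes $T_{f^k(x)}(f^kW_n^x)$ and $T_{f^k(f^{-n}z)}(f^kW_n^x)$, and to show these are summable in $k$ uniformly in $n$.

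For (i): since $n\in\Gamma_q^\JJ(x)\subset \Gamma_{\bl}^{e'}(x)$, the set $f^nW_n^x$ lies in $\QQQ_{\JJ,n}$, and the dynamical bound \eqref{eqn:hyptimeHP2} (equivalently the definition \eqref{eqn:QQQ}) gives, for $y,y'\in W_n^x$ and $0\le k\le n$,
\[
d(f^k(y), f^k(y')) \le C e^{-\bl(n-k)} d(f^n(y), f^n(y')) \le C e^{-\bl(n-k)} \diam(f^nW_n^x) \le C\bar r\, e^{-\bl(n-k)}.
\]
Applying this with $y=x$, $y'=f^{-n}(z)$ yields an exponentially decaying bound $d(f^k(x), f^k(f^{-n}z)) \le C\bar r\, e^{-\bl(n-k)}$, which is summable over $k\in[0,n]$ with a bound independent of $n$; moreover taking $k=n$ shows $|\log\rho_n^x|$ is controlled by a constant times $d(f^n(x),z)^\alpha \le (C\bar r)^\alpha$, and more importantly it shows $|\rho_n^x|_\alpha$ is bounded once we also establish Hölder dependence. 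For (ii): the tangent planes $T_{f^k(f^jW_n^x)}$ are the images under $Df^k$ of $T_{W_n^x}$, and because $f^nW_n^x\in \PPP_\II$ has uniformly bounded geometry ($\|D\psi_n\|\le\bar\gamma$, $|D\psi_n|_\alpha\le\bar\kappa$) and sits inside $\overline{K^u}$, the Hölder-curvature control on $f^nW_n^x$ combined with pulling back by the contracting dynamics gives $\measuredangle\big(T_{f^k(x)}(f^kW_n^x), T_{f^k(f^{-n}z)}(f^kW_n^x)\big) \le C_2\, d(f^k(x),f^k(f^{-n}z))^\alpha \le C_3 e^{-\bl\alpha(n-k)}$ — here I would use that the graph transform for $f$ acts as a contraction on the $C^{1+\alpha}$ jets of admissible manifolds (this is exactly the mechanism behind Theorem \ref{thm:HP2}), so the unit tangent field along $W_n^x$ inherits Hölder continuity with a constant bounded in terms of $\bar\gamma,\bar\kappa,\bar r$ and the distortion of $f$; alternatively one can bootstrap directly from \eqref{eqn:hyptimeHP2}. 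Summing over $k$ gives $\sum_{k=0}^{n-1}\big(d(\cdot)^\alpha + \measuredangle(\cdot)^\alpha\big) \le C_4 \sum_{k=0}^{n-1} e^{-\bl\alpha(n-k)} \le C_4/(1-e^{-\bl\alpha}) =: \log L_0$, uniformly in $n$, $q$, and $x\in S_{\JJ,q,n}$. This gives $\rho_n^x(z)\in[1/L_0, L_0]$ for all $z$.

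Finally, for the Hölder bound $|\rho_n^x|_\alpha\le L$: given $z,z'\in f^n(W_n^x)$, write $\log\rho_n^x(z)-\log\rho_n^x(z')$ as the sum over $k$ of $\log|Df|_{T_{f^k(f^{-n}z')}}| - \log|Df|_{T_{f^k(f^{-n}z)}}|$ and bound each term by $C_1\big(d(f^k(f^{-n}z),f^k(f^{-n}z'))^\alpha + \measuredangle(T_{f^k(f^{-n}z)},T_{f^k(f^{-n}z')})^\alpha\big)$; both of these are $\le C_5 e^{-\bl\alpha(n-k)} d(z,z')^\alpha$ by the same contraction and Hölder-curvature arguments as above (applied now to the pair $z,z'$ rather than $x,z$), so summing the geometric series gives $|\log\rho_n^x(z)-\log\rho_n^x(z')|\le C_6 d(z,z')^\alpha$, and since $\rho_n^x$ is bounded above and below this converts to $|\rho_n^x|_\alpha\le L$ for a suitable $L$. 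Taking $L$ to be the maximum of $L_0$, $C_6$, and the implied constant completes the proof.

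The main obstacle I anticipate is step (ii) — controlling the \emph{angles} between the tangent planes of $f^k(W_n^x)$ at two different points, uniformly in $n$ — because unlike the distance estimate, which follows directly from the definition \eqref{eqn:QQQ}, the angle estimate requires knowing that the Hölder constant of the unit tangent field to $W_n^x$ (equivalently, of $D\psi$ in the graph representation) is controlled at \emph{every} intermediate scale $f^k$, not just at the final time $n$ where the bound $|D\psi_n|_\alpha\le\bar\kappa$ is given. This is really the content of (the proof of) Theorem \ref{thm:HP2}/\cite[Theorem A]{CP16}: the graph transform contracts on $C^{1+\alpha}$ jets, so one gets uniform Hölder-curvature bounds along the whole finite orbit segment provided the hyperbolic-time condition \eqref{eqn:ehHP2} holds — which it does, since $n\in\Gamma_{\bl}^{e'}(x)$. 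I would cite the relevant intermediate estimates from \cite{CP16} rather than reproving them, and the rest of the argument is then the routine summation of a geometric series sketched above.
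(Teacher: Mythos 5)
Your proposal is correct and follows essentially the same route as the paper: telescoping the distortion along the orbit, using the backward-contraction property built into $\QQQ_{\JJ,n}$ to sum a geometric series, and invoking control of the tangent planes of the intermediate images $f^k W_n^x$ from \cite{CP16} to make the restricted Jacobian H\"older along each image. The intermediate-curvature control you flag as the main obstacle is handled in the paper exactly as you suggest, by citing \cite[Theorem D]{CP16} rather than reproving it.
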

\begin{proof}
By \cite[Theorem D]{CP16}, there is $\gamma>0$ such that the manifolds $f^kW_n^x$ for $0\leq k<n$ have the property that for every $y,z\in f^kW_n^x$, the Grassmanian distance between $T_y(f^kW_n^x)$ and $T_z(f^kW_n^x)$ is smaller than some fixed constant $\gamma$ (independent of $k,n$).  In particular, because $f$ is $C^{1+\alpha}$, there exists $K>0$ such that
\[
|\det Df(y)|_{T_{y} (f^kW_n^x)} - \det Df(z)|_{T_{z} (f^kW_n^x)} | \leq K d(y,z)^{\alpha}
\]
for every $0\leq k<n$ and $y,z\in f^kW_n^x$.  Using the backwards contraction property \eqref{eqn:QQQ} of $f^nW_n^x$, we see that for every $z_1,z_2\in f^nW_n^x$, we have
\begin{multline*}
|\phi_n(f^{-n}(z_1)) - \phi_n(f^{-n}(z_2))| \leq \sum_{k=1}^n K d(f^{-k}(z_1),f^{-k}(z_2))^\alpha \\ 
\leq K\sum_{k=1}^n (e^{-\bl k} d(z_1,z_2))^\alpha 
= Ke^{-\bl\alpha}(1-e^{-\bl\alpha})^{-1} d(z_1,z_2)^\alpha.
\end{multline*}
Write $K' = Ke^{-\bl\alpha}(1-e^{-\bl\alpha})^{-1}$, so that
\begin{equation}\label{eqn:phinHold}
|\phi_n(f^{-n}(z_1)) - \phi_n(f^{-n}(z_2))| \leq K'd(z_1,z_2)^\alpha.
\end{equation}
Applying this with $z_1=z$ and $z_2=f^n(x)$ yields
\[
\left| \frac{\phi_n(f^{-n}(z))}{\phi_n(x)} - 1 \right| \leq K'\bar r^\alpha e^{-\bl d_u n}
\]
for every $z\in f^n(W)$, where we use the fact that $\phi_n(x) \geq e^{\bl d_u n}$.  Writing $K'' = K' \bar r^\alpha + 1$, we see that
\[
\frac 1{K''} \leq \frac{\phi_n(f^{-n}(z))}{\phi_n(x)} \leq K'',
\]
which proves the first inequality for $\rho_n^x$.  (Note that $K''$ depends only on $K$, $\bar r$, $\bar \chi$, and $\alpha$.)

To show that the functions $\rho_n^x$ are uniformly H\"older continuous, we fix $z_1,z_2 \in f^nW_n^x$, write $y_i = f^{-n}(z_i)$, and observe that
\begin{multline*}
|\rho_n^x(z_1) - \rho_n^x(z_2)| =
\left| \frac{\phi_n(x)}{\phi_n(y_1)} - \frac{\phi_n(x)}{\phi_n(y_2)} \right| \\
\leq \frac {\phi_n(x) |\phi_n(y_1) - \phi_n(y_2)|}{\phi_n(y_1) \phi_n(y_2)} 
\leq K' K''d(z_1,z_2)^\alpha e^{-\bl d_u n}.
\end{multline*}
This completes the proof of Lemma \ref{lem:bdd-distortion}.
\end{proof}

To conclude the proof of Lemma \ref{lem:eht2}, we use Lemma \ref{lem:bdd-distortion} to estimate $m_{f^nW_n^x}(H_{\JJ,q}(f^nW_n^x))$ and get $f^nW_n^x \in \RRR_{\KK,q}$.
Fix $\beta'>0$ and let $0\leq q\leq n$ and $x\in S_{\JJ,q,n}$ be such that
\begin{equation}\label{eqn:many-Sn}
m_W (W_n^x \cap S_{\JJ,q,n}) \geq \beta' m_W(W_n^x).
\end{equation}
By the definition of $\Gamma^s_{\JJ,q} \supset \Gamma_q^\JJ$ in \eqref{eqn:shtimes}, we have
$f^n(W_n^x \cap S_{\JJ,q,n}) \subset H_{\JJ,q}(f^nW_n^x)$.
Then Lemma \ref{lem:bdd-distortion} gives
\begin{multline*}
m_{f^n(W)} (f^n(W_n^x \cap S_{\JJ,q,n})) = \int_{W_n^x \cap S_{\JJ,q,n}} \frac{\phi_n(x)}{\rho_n^x(z)} \,dm_W(z) \\
\geq \tfrac 1L \phi_n(x)m_W(W_n^x \cap S_{\JJ,q,n})) \geq \phi_n(x) \beta' L^{-1} m_W(W_n^x),
\end{multline*}
and similarly,
\[
m_{f^n(W)}(f^nW_n^x) \leq L\phi_n(x) m_W(W_n^x),
\]
so we conclude that
\[
m_{f^n(W)} (H_{\JJ,q}(f^nW_n^x)) \geq \beta' L^{-2}m_{f^n(W)}(f^n(W_x^n)).
\]
In particular, taking $\beta=\beta'L^{-2}$ gives $f^nW_n^x\in \RRR_{\KK,q}$.  Moreover, Lemma \ref{lem:bdd-distortion} shows that $(f^nW_n^x,\rho_n^x)\in \RRR_{\KK,q}'$, and since
\[
f_*^nm_{W_n^x}(E) = \frac 1{\phi_n(x)} \int_{f^n(E)} \rho_n^x(z)\,dm_{f^nW_n^x}(z),
\]
this shows that $f_*^nm_{W_n^x}\in \MMM_{\KK,q}$, as desired.

\subsection{Proof of Lemma \ref{lem:besicovitch}}\label{sec:besicovitch}

Fix $d_u$ and $\II = (\bar\theta,\bar\gamma,\bar\kappa,\bar{r})$.  We must produce $p\in \NN$ such that for every $n\in \NN$ and $0\leq q\leq n$, the set $S_{\JJ,q,n}$ admits $p$ subsets $A_1,\dots, A_p$ such that the sets $\{W_n^x \mid x\in A_i, 1\leq i\leq p\}$ cover $S_{\JJ,q,n}$, and are disjoint within each fixed value of $i$.

\begin{remark}
The sets $W_n^x$  are not necessarily close to being balls in the metric $d_W$, but their images $f^n W_n^x$ are almost balls in the metric $d_{f^nW}$.  Thus we could obtain Lemma \ref{lem:besicovitch} from the Besicovitch covering lemma in \cite[2.8.14]{hF69} if we could prove that $\bigcup_{x\in S_{\JJ,q,n}} f^n W_n^x \subset f^nW$ is \emph{directionally limited}.  It is shown in \cite[2.8.9]{hF69} that $C^2$ Riemannian manifolds are directionally limited, but we only know that $f^n W$ is $C^{1+\alpha}$.  Thus we give a direct proof of Lemma \ref{lem:besicovitch} taking advantage of the fact that the sets $f^n W_n^x$ have uniformly controlled radii.  (In the general Besicovitch covering lemma the radii are allowed to vary.)
\end{remark}




Given $n\in \NN$ and $x\in S_{\JJ,q,n}$, we will write $\hat x = f^n x$, $\hat W_n^x = f^n W_n^x$, etc., in order to simplify notation.  The uniform expansion of $f^{n-k}\colon f^kW_n^x\to \hat W_n^x$ guaranteed by Lemma \ref{lem:real-eht} shows that $\hat W_n^x$ is the graph of a function $\psi \colon B_{E^u(\hat x)}(\bar{r})\to E^s(\hat x)$; moreover, we have $\|D\psi\|\leq \bar\gamma$ and $|D\psi|_\alpha\leq \bar\kappa$, and the angle between $E^u(\hat x)$ and $E^s(\hat x)$ is at least $\bar\theta$.  Thus the map $\Psi\colon E^u(\hat x)\to T_{\hat x}M$ given by $\Psi(v) = v+\psi(v)$ is Lipschitz with a constant that depends only on $\II$. 

We recall some terminology and notation from \S\ref{sec:reduction}.  Given $a\in M$, let $\GGG_a^{d_u}$ be the Grassmanian collection of $d_u$-dimensional subspaces of $T_a M$, with metric $\rho$ given by angle.  Given $z,x\in M$ nearby, we write $\pi_{z,x} = \exp_x^{-1} \circ \exp_z \colon B_z \to T_x M$, observing that $\pi_{z,x} = I_{z,x} + g_{z,x}$, where $I_{z,x}$ is an isometry and $g_{z,x}$ is smooth with $Dg_{z,x}(0)=0$.

From the observations in the first paragraph above,  there is $\eps>0$ such that for every $x\in S_{\JJ,q,n}$ and $z,z'\in \exp_{\hat x}^{-1} \hat W_n^x \subset T_{\hat x} M$, we have
\begin{equation}\label{eqn:rho}
\rho(T_z \exp_{\hat x}^{-1} \hat W_n^x, T_{z'} \exp_{\hat x}^{-1} \hat W_n^x) < \eps.
\end{equation}
(We commit a slight abuse of notation by conflating $T_{\hat x} M$ with $T_z T_{\hat x} M$ for each $z\in T_{\hat x} M$ so that we can compare the angles.)
Let $E_1(\hat x) = T_{\hat x}\hat W_n^x$ and $E_2(\hat x) = E_1(\hat x)^\perp$, and let $P\colon T_{\hat x} M \to E_1(\hat x)$ be orthogonal projection along $E_2(\hat x)$.  Then $P(\exp_{\hat x}^{-1} \hat W_n^x) \subset B_{E_1(\hat x)}(0,r')$, where $r'>r$ depends only on $\II$.

We conclude that if $d_{\hat W}$ denotes distance on $\hat W$ and $B_{\hat W}(y,r)$ denotes the $d_{\hat W}$-ball of radius $r$ centred at $y$, then there are $0<r<\bar{r}<r'$ such that
\begin{equation}\label{eqn:B_W}
B_{\hat W}(\hat x,r) \subset \hat W_n^x \subset B_{\hat W}(\hat x,r') \text{ for all } x\in S_{\JJ,q,n}.
\end{equation}
Let $A\subset S_{\JJ,q,n}$ be such that $\hat A = f^n A$ is a maximal $r$-separated subset of $\hat S_{\JJ,q,n} := f^n S_{\JJ,q,n}$.  Then we have
\begin{equation}\label{eqn:fnSnin}
\hat S_{\JJ,q,n} \subset \bigcup_{x\in A} B_{\hat W}(\hat x,r) \subset \bigcup_{x\in A} \hat  W_n^x,
\end{equation}
and in particular, $S_{\JJ,q,n} \subset \bigcup_{x\in A} W_n^x$.  Let $G$ be the graph whose vertex set is $A$, with an edge between $x,y\in A$ if and only if $W_n^x \cap W_n^y \neq \emptyset$.  Write $x\leftrightarrow y$ when this occurs.

To complete the proof of Lemma \ref{lem:besicovitch} it suffices to show that there is $p\in \NN$, depending only on $d_u$ and $\II$, such that the chromatic number of $G$ is $\leq p$.  It suffices to show that every vertex of $G$ has degree $\leq p$.

Let $W_n = \bigcup_{x\in S_{\JJ,q,n}} W_n^x$.  Fix $x\in A$, and consider the set
\[
\hat V_n^x := \bigcup\{ B_{\hat W_n} (\hat y, r') \mid y\in A,
y\leftrightarrow x\}.
\]
Note that $\hat V_n^x \subset B_{\hat W_n}(\hat x, 3r')$, and that for every $y\leftrightarrow x$ we have 
$B_{\hat W_n}(\hat y,\tfrac r2) \subset \hat V_n^x$.  Because $\hat A$ is $r$-separated, the sets $\{B_{\hat W_n}(\hat y,\tfrac r2) \mid y\in A\}$ are pairwise disjoint.  In particular, we have
\begin{equation}\label{eqn:degx}
\begin{aligned}
m_{\hat W_n} (\hat V_n^x) &\geq \sum \{ m_{\hat W_n}(B_{\hat W_n}(\hat y,\tfrac r2)) \mid y\in A, y\leftrightarrow x \} \\
&\geq (\deg x) \inf_{y\in S_{\JJ,q,n}} m_{\hat W_n}(B_{\hat W_n}(\hat y,\tfrac r2)).
\end{aligned}
\end{equation}

For a lower bound on $m_{\hat W_n}(B_{\hat W_n}(\hat y,\tfrac r2))$, we write $V(r,d_u)$ for the volume of the Euclidean ball of radius $r$ in dimension $d_u$, and let $Q$ be such that the exponential map $\exp_a\colon T_a M \to M$ is $Q$-Lipschitz on the ball of radius $3r'$ for every $a\in \overline{U}$.  (Here we use compactness of $\overline{U}$.)  Using \eqref{eqn:rho}, we see that for every $y\in S_{\JJ,q,n}$ we have
\begin{equation}\label{eqn:Myr2}
m_{\hat W_n}(B_{\hat W_n}(\hat y,\tfrac r2)) \geq Q^{-d_u}(1+\eps)^{-d_u} V(\tfrac r2, d_u).
\end{equation}

For the upper bound on $m_{\hat W_n} (\hat V_n^x)$, we first observe that $\exp_{\hat x}^{-1} \hat V_n^x$ is not contained in $\exp_{\hat x}^{-1} \hat W_n^x$.  However, for every $y\leftrightarrow x$ there is $u\in W_n^x \cap W_n^y$, and thus for any $z\in \exp_{\hat x}^{-1} \hat W_n^y$ we have
\begin{align*}
\rho(T_z \exp_{\hat x}^{-1} \hat V_n^x, E^1(\hat x))
&\leq \rho(T_z \exp_{\hat x}^{-1} \hat W_n^y, T_{\exp_{\hat x}^{-1}\hat u} \exp_{\hat x}^{-1} \hat W_n^y) \\
&\qquad\qquad + \rho(T_{\exp_{\hat x}^{-1}\hat u} \exp_{\hat x}^{-1} \hat W_n^y, E^1(\hat x)) \\
&\leq \|g_{\hat y,\hat x}\|_{C^2} \rho(T_{\pi_{y,x}^{-1} z} \exp_{\hat y}^{-1} \hat W_n^y, T_{\exp_{\hat y}^{-1}\hat u} \exp_{\hat y}^{-1} \hat W_n^y) + \eps.
\end{align*}
There is $C>0$ such that for every $x,y \in S_{\JJ,q,n}$ with $d_{\hat W}(\hat x,\hat y) < 2r'$ we have $\|g_{\hat y,\hat x}\|_{C^2} \leq C$, and so for every $z,z' \in \exp_{\hat x}^{-1} \hat V_n^x \subset T_{\hat x}M$, we have
\begin{equation}\label{eqn:rhoVnx}
\rho(T_z \exp_{\hat x}^{-1} \hat V_n^x, T_{z'} \exp_{\hat x}^{-1} \hat V_n^x) < 2\eps(1+C).
\end{equation}
Then as in \eqref{eqn:Myr2} we have
\begin{equation}\label{eqn:Myr3}
m_{\hat W_n}(\hat V_x^n) \leq Q^{d_u} (1+2\eps(1+C))^{d_u} V(d_u,3r').
\end{equation}
Combining \eqref{eqn:degx}, \eqref{eqn:Myr2}, and \eqref{eqn:Myr3}, we see that there is $p\in \NN$, depending only on $d_u,r,r',\eps,Q,C$, such that $\deg x \leq p$ for every $n\in \NN$ and $x\in S_{\JJ,q,n}$.  This completes the proof of Lemma \ref{lem:besicovitch}.

\subsection{Proof of Proposition \ref{prop:unif-large}}\label{sec:pf-unif-large}

First note that Lemma \ref{lem:real-eht} gives $\delta>0$ such that for each $q\in \NN$ there is $N(q)$ with the property that for every $n\geq N(q)$ we have
\[
\int_W \frac 1n \#([0,n)
\cap \Gamma_q^\JJ(x)) \,dm_W(x) > \delta,
\]
or equivalently,
\begin{equation}\label{eqn:average-delta}
\frac 1n \sum_{k=1}^n m_W(S_{\JJ,q,k}) > \delta.
\end{equation}
Let $p\in \NN$ be as in Lemma  \ref{lem:besicovitch} and fix $\beta' > 0$ such that
\begin{equation}\label{eqn:beta''}
\beta'' := p^{-1} \delta - \beta' m_W(W) > 0.
\end{equation}
Given $0\leq q\leq n$, decompose $S_{\JJ,q,n}$ into the following two sets:
\begin{align*}
S_{\JJ,q,n}^g &= \{x\in S_{\JJ,q,n} \mid m_W(W_n^x \cap S_{\JJ,q,n}) \geq \beta' m_W(W_n^x) \}, \\
S_{\JJ,q,n}^b &= \{x\in S_{\JJ,q,n} \mid m_W(W_n^x \cap S_{\JJ,q,n}) < \beta' m_W(W_n^x) \},
\end{align*}
where $W_n^x$ is as in Lemma \ref{lem:real-eht}.  Note that $S_{\JJ,q,n}^g$ is exactly the set of points to which Lemma \ref{lem:eht2} applies (for the given value of $\beta'$).
Now let $A_1,\dots, A_p \subset S_{\JJ,q,n}$ be given by Lemma \ref{lem:besicovitch}.  We have
\begin{align*}
m_W&(S_{\JJ,q,n}) \leq \sum_{i=1}^p \sum_{x\in A_i} m_W(S_{\JJ,q,n} \cap W_n^x) \\
&= \sum_{i=1}^p \bigg(\sum_{x\in A_i \cap S_{\JJ,q,n}^b} m_W(S_{\JJ,q,n} \cap W_n^x) +
\sum_{x\in A_i \cap S_{\JJ,q,n}^g} m_W(S_{\JJ,q,n} \cap W_n^x)\bigg) \\
&\leq p\beta' m_W(W) + \sum_{i=1}^p \sum_{x\in A_i \cap S_{\JJ,q,n}^g} m_W(W_n^x),
\end{align*}
where the last inequality follows from the definition of $S_{\JJ,q,n}^b$, which gives $m_W(S_{\JJ,q,n} \cap W_n^x) \leq \beta' m_W(W_n^x)$ for every $x\in A_i \cap S_{\JJ,q,n}^b$, together with the fact that for each value of $i$, the set $\{W_n^x \mid x\in A_i \cap S_{\JJ,q,n}^b\}$ are disjoint.  We conclude that there exists $i$ such that
\begin{equation}\label{eqn:1pbeta'}
\sum_{x\in A_i\cap S_{\JJ,q,n}^g} m_W(W_n^x) \geq \tfrac 1p m_W(S_{\JJ,q,n}) - \beta' m_W(W).
\end{equation}
(The right-hand side of \eqref{eqn:1pbeta'} may be negative for some values of $n$, but we will see that it is positive on average.)
Write 
$S_{\JJ,q,n}' = S_{\JJ,q,n}^g \cap A_i$ for this choice of $i$.  Then given any $x\neq y\in S_{\JJ,q,n}'$, we have $W_n^x \cap W_n^y=\emptyset$, which is the disjointness condition we wanted.  Writing $W_{\JJ,q,n} = \bigsqcup_{x\in S_{\JJ,q,n}'} W_n^x$, Lemma \ref{lem:eht2} gives
\begin{equation}\label{eqn:in-cpt}
f_*^n m_{W_{\JJ,q,n}} \in \MMM_{\KK,q}
\end{equation}
whenever $n\geq q$.  Moreover, \eqref{eqn:average-delta}, \eqref{eqn:beta''}, and \eqref{eqn:1pbeta'} give
\begin{equation}\label{eqn:Wn}
\frac 1n \sum_{k=1}^n m_W(W_{\JJ,q,k})
\geq \frac 1p\delta - \beta' m_W(W) = \beta'' > 0
\end{equation}
whenever $n \geq N(q)$.
To complete the proof of Proposition \ref{prop:unif-large}, we find $q_n\to\infty$ such that the measures $
\mu_n = \frac 1n \sum_{k=0}^{n-1} f_*^k (m_{W})$ have uniformly large projection to $\MMM_{\KK,q_n}$; that is, we need $\nu_n \in \MMM_{\KK,q_n}$ such that $\nu_n \leq \mu_n$ and $\llim_n \|\nu_n\| > 0$.
Choose $q_n \to \infty$ such that
\[
n\geq N(q_n) \text{ and } q_n \leq \tfrac 12 \beta'' m_W(W) n.
\]
It follows from \eqref{eqn:in-cpt} that for every $k\geq q_n$ we have
\[
f_*^k m_W \geq f_*^k m_{W_{\JJ,q,k}} \in \MMM_{\KK,q_n}.
\]
Averaging over $k$ from $q_n$ to $n$ gives
\[
\mu_n \geq \nu_n := \frac 1n \sum_{k=q_n}^{n-1} f_*^k m_{W_{\JJ,q,k}} \in \MMM_{\KK,q_n}
\]
and so it only remains to estimate $\|\nu_n\|$.  Using \eqref{eqn:Wn} and our choice of $q_n$, we see that
\begin{align*}
\|\nu_n\| &= \frac 1n\sum_{k=q_n}^{n-1} \|m_{W_{\JJ,q,k}}\| 
= \frac 1n \bigg(\sum_{k=0}^{n-1} m_W(W_{\JJ,q,k})\bigg) - \frac 1n \bigg(\sum_{k=0}^{q_n-1} m_W(W_{\JJ,q,k}) \bigg) \\
&\geq \beta'' - \tfrac{q_n}n m_W(W) \geq \tfrac 12 \beta''.
\end{align*}


\subsection{Proof of Proposition \ref{prop:MacSRB}}\label{sec:MacSRB}


We prove that $\Mac\cap \Mh$ is precisely the set of SRB measures for $f$.

First we show that every SRB measure $\mu$ is in $\Mac\cap\Mh$.  Every SRB measure is hyperbolic, so $\mu\in\Mh$.  To show that $\mu\in \Mac$, first observe that $\mu$ is invariant and supported on $\Lambda$.  As discussed before Definition \ref{def:SRB}, $\mu$ can be expressed in terms of conditional measures on local unstable manifolds.  More precisely, if we write $\tilde\RRR$ for the set of all local unstable manifolds (so that in particular $\tilde\RRR \subset \RRR$), then for each $\ell$ one can take a measurable partition of the regular set $Y_\ell$ into sets of the form $W\cap Y_\ell$, where $W\in \tilde\RRR$, and let $\{ \mu_W \mid W\in\tilde\RRR\}$ be the conditional measures of $\mu$ relative to each element of this partition.  This means that there is a measure $\eta$ on $\tilde\RRR$ such that 
\begin{equation}\label{eqn:conditional}
\mu(E) = \int_{\tilde\RRR} \mu_W(E) \,d\eta(W)
\end{equation}
for every measurable set $E$.  By Definition \ref{def:SRB} we have $\mu_W \ll m_W$ for  $\eta$-a.e.\ $W$, and since every local unstable manifold is contained in $\RRR$, this shows that $\mu(E)=0$ for all $E\in \NNN(\RRR)$, so $\mu\in \Mac$.

Conversely, if $\mu\in \Mac\cap\Mh$, we show that $\mu$ is an SRB measure.  Since $\mu$ is invariant, it is supported on $\Lambda$. Because hyperbolicity is given, this amounts to showing that the  conditional measures generated by $\mu$ on local unstable manifolds are absolutely continuous with respect to the leaf volume. Once again using the decomposition of $\mu$ in \eqref{eqn:conditional}, we show that $\mu_W\ll m_W$ for $\eta$-a.e.\ $W$.  Indeed, if there is a positive $\eta$-measure set of $W$ such that $\mu_W \not\ll m_W$, then we may write $\WWW$ for this set and take for each $W\in \WWW$ a set $E_W\subset W$ with $m_W(E_W)=0$ and $\mu_W(E_W)>0$.  Taking $E$ to be the union of these $E_W$ yields a set with $E \subset \bigcup_{W\in \WWW} W$ and  $m_W(E)=0$ for every $W\in \WWW$, so $E\in \NNN(\RRR)$, and moreover $\mu(E) >0$ since $\mu_W(E_W)>0$, which contradicts the assumption that $\mu\in\Mac$.

\subsection{Proof of Proposition \ref{prop:decomposition}}\label{sec:decomposition}

We deduce
Proposition \ref{prop:decomposition} from a  generalisation of the Lebesgue decomposition theorem, which follows the proof given in~\cite{jB71}.

Let $(X,\Omega)$ be a measurable space, and let $\MMM$ denote the collection of all finite measures on $X$.  We say that a collection of subsets $\NNN \subset \Omega$ is a \emph{candidate collection of null sets} if it is closed under passing to subsets and countable unions:
\begin{enumerate}
\item if $E\in \NNN$ and $F\in \Omega$, $F\subset E$, then $F\in \NNN$;
\item if $\{E_n\} \subset \NNN$ is a countable collection, then $\bigcup_n E_n \in \NNN$.
\end{enumerate}
Given a candidate collection of null sets, let $\SSS = \SSS(\NNN)$ be the subspace of $\MMM$ defined by
\begin{equation}\label{eqn:SN}
\SSS = \{ \mu\in \MMM \mid \mu(E) = 0 \text{ for all } E\in \NNN \}.
\end{equation}
Given a subspace $\SSS \subset \MMM$, define the space of singular measures by
\[
\SSS^\perp = \{ \nu\in\MMM \mid \nu\perp \mu \text{ for all } \mu\in \SSS \}.
\]
If $\SSS$ is given by~\eqref{eqn:SN}, then the subspaces $\SSS$ and $\SSS^\perp$ give a decomposition of $\MMM$.

\begin{lemma}\label{lem:decomposition}
Let $\NNN$ be a candidate collection of null sets, and let $\SSS = \SSS(\NNN)$ be given by~\eqref{eqn:SN}.  Then $\MMM = \SSS \oplus \SSS^\perp$.
\end{lemma}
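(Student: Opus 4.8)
The plan is to produce the decomposition explicitly by attaching to each $\mu\in\MMM$ a ``maximal'' set of $\NNN$ on which $\mu$ concentrates. First I would record the trivial structural facts: by~\eqref{eqn:SN} the collection $\SSS$ is closed under addition and multiplication by nonnegative scalars, and $\SSS\cap\SSS^\perp=\{0\}$, since any $\nu$ in the intersection satisfies $\nu\perp\nu$, which forces $\nu=0$. Hence to prove $\MMM=\SSS\oplus\SSS^\perp$ it suffices to show that every $\mu\in\MMM$ can be written $\mu=\mu_1+\mu_2$ with $\mu_1\in\SSS$ and $\mu_2\in\SSS^\perp$, and that such a decomposition is unique.

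For existence, set $s=\sup\{\mu(E)\mid E\in\NNN\}$; since $\mu$ is finite, $s\le\mu(X)<\infty$. Pick $E_n\in\NNN$ with $\mu(E_n)>s-1/n$ and let $E^*=\bigcup_n E_n$, which lies in $\NNN$ because $\NNN$ is closed under countable unions. Then $\mu(E^*)\ge\mu(E_n)$ for every $n$ gives $\mu(E^*)\ge s$, while $E^*\in\NNN$ gives $\mu(E^*)\le s$, so $\mu(E^*)=s$. Define $\mu_1(A)=\mu(A\setminus E^*)$ and $\mu_2(A)=\mu(A\cap E^*)$, so $\mu=\mu_1+\mu_2$. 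For $\mu_1\in\SSS$: if $E\in\NNN$ then $E\cup E^*\in\NNN$, so $\mu(E^*)+\mu(E\setminus E^*)=\mu(E\cup E^*)\le s=\mu(E^*)$, forcing $\mu(E\setminus E^*)=0$, i.e.\ $\mu_1(E)=0$. For $\mu_2\in\SSS^\perp$: the measure $\mu_2$ is concentrated on $E^*\in\NNN$, so given any $\nu\in\SSS$ we have $\nu(E^*)=0$ and $\mu_2(X\setminus E^*)=0$, whence $\mu_2\perp\nu$.

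For uniqueness I would first extract from the same construction a clean description of $\SSS^\perp$: a measure $\nu$ lies in $\SSS^\perp$ if and only if $\nu$ is concentrated on some set of $\NNN$. One implication is the argument just given for $\mu_2$. Conversely, if $\nu\in\SSS^\perp$, apply the construction above to $\nu$ to get $\nu=\nu_1+\nu_2$ with $\nu_1\in\SSS$, $\nu_1\le\nu$, and $\nu_2$ concentrated on a set $E^*(\nu)\in\NNN$; since $\nu\perp\nu_1$, there is $B$ with $\nu(X\setminus B)=0$ and $\nu_1(B)=0$, so $\nu_1(X)=\nu_1(X\setminus B)\le\nu(X\setminus B)=0$, and thus $\nu=\nu_2$ is concentrated on $E^*(\nu)$. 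Now let $\mu=\mu_1+\mu_2=\mu_1'+\mu_2'$ be two decompositions as in the statement. By the description of $\SSS^\perp$ there are $E,E'\in\NNN$ carrying $\mu_2$ and $\mu_2'$ respectively, and $N:=E\cup E'\in\NNN$ carries both, while $\mu_1(N)=\mu_1'(N)=0$ because $\mu_1,\mu_1'\in\SSS$. Then for every measurable $A$, $\mu(A\cap N)=\mu_1(A\cap N)+\mu_2(A\cap N)=\mu_2(A)$, and likewise $\mu(A\cap N)=\mu_2'(A)$, so $\mu_2=\mu_2'$ and hence $\mu_1=\mu_1'$. This proves Lemma~\ref{lem:decomposition}; Proposition~\ref{prop:decomposition} then follows by applying it with $X=U$ and $\NNN=\NNN(\RRR)$, which is a candidate collection of null sets directly from~\eqref{eqn:NR}. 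The only step with any content is the existence of the maximal set $E^*$, which hinges on closure of $\NNN$ under countable unions; once that is in hand, everything else is bookkeeping, so I do not anticipate a serious obstacle.
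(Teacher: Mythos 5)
Your proof is correct and takes essentially the same route as the paper: the identical maximal-set construction (choose $E_n\in\NNN$ realizing the supremum of $\mu$ over $\NNN$, let $E^*=\bigcup_n E_n\in\NNN$, and split $\mu=\mu|_{X\setminus E^*}+\mu|_{E^*}$), with the same verification that the first piece lies in $\SSS$ and the second in $\SSS^\perp$. The only difference is that you spell out uniqueness via the characterization of $\SSS^\perp$ as the measures carried by a set of $\NNN$, whereas the paper simply invokes $\SSS\cap\SSS^\perp=\{0\}$; this is a more explicit rendering of the same fact rather than a different argument.
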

\begin{proof}
Fix $\nu\in \MMM$; we need to show that there is a unique decomposition $\nu = \nu_1 + \nu_2$, where $\nu_1 \in \SSS$ and $\nu_2 \in \SSS^\perp$.  To this end, consider the following collection of subsets:
\[
\NNN' = \{ E \in \NNN \mid \nu(E) > 0 \}.
\]
Let $\theta = \sup \{\nu(E) \mid E \in \NNN' \}$, and let $E_n\in \NNN'$ be a countable collection such that $\nu(E_n) \to \theta$.  Consider the union $A = \bigcup_n E_n$, and observe that $\nu(A) = \theta$ and $A\in \NNN'$.

We claim that $\nu_1 = \nu|_{X\setminus A}$ and $\nu_2 = \nu|_A$ gives the desired decomposition.  Indeed, $\nu_2\perp \mu$ for all $\mu\in \SSS$ since $\mu(A)=0$ and $\nu_2(X\setminus A) = 0$, so $\nu_2 \in \SSS^\perp$.  Furthermore, given $E\in \NNN$, we may write $E' = E \setminus A$ and $F = E \cap A$; then $\nu_1(F) = 0$ by definition, and if $\nu(E') = \nu_1(E')>0$, we would have $\nu(E' \cup A) = \nu(E') + \nu(A) > \theta$, contradicting the definition of $\theta$.  It follows that $\nu_1(E) = 0$, and since this holds for all $E\in \NNN$, we have $\nu_1 \in \SSS$.

Finally, uniqueness of the decomposition follows from the fact that $\SSS \cap \SSS^\perp = \{0\}$.
\end{proof}

Proposition \ref{prop:decomposition} follows
upon observing that 
the set $\NNN = \NNN(\RRR)$ defined in \eqref{eqn:NR} is a candidate collection of null sets, and \eqref{eqn:Mac} gives $\Mac = \SSS(\NNN)$, so Lemma \ref{lem:decomposition} shows that $\MMM(M) = \Mac \oplus (\Mac)^\perp$.

\section{Proof of Theorem \ref{thm:perturbed}}\label{sec:general-conditions}

\subsection{``Good'' iterates}

Recall that $\tilde\AAA$ denotes the set of $C^{1+\alpha}$ curves $W\subset U\setminus Z$ such that
\begin{itemize}
\item $T_x W \subset K^u(x)$ for all $x\in W$;
\item 
$W$ has H\"older curvature bounded by $L$;
\item the length of $W$ is between $\eps$ and $2\eps$.
\end{itemize}
The first observation we need is that by uniform hyperbolicity on $U\setminus Z$, \ref{C2} can be extended to all $W\in \tilde\AAA$, not just those that enter $Z$.  In fact, it can be strengthened slightly.  
Recall also that $\lambda(x) =\min(\lambda^u(x) - \Delta(x), \lambda^s(x))$ as in \eqref{eqn:le}.

\begin{lemma}\label{lem:local-growth}
There are constants $L,\eps,Q,\nu>0$ and a function $p\colon \NN\to [0,1]$ such that $\sum_{t\geq 1} tp(t)<\infty$ and every $W\in \tilde\AAA$ has an admissible decomposition $\{W_j, \tau_j\}$ satisfying
\begin{enumerate}[label=(\arabic{*})]
\item\label{t-integrable}
$m_W(\{x\in W \mid \tau(x) = t\}) \leq p(t)m_W(W)$ for every $t\in \NN$;
\item\label{t-largereturns}
$\bar G(W_j)\in \tilde\AAA$ for every $j$;
\item\label{t-distortion}
if $x,y\in W_j$ then $\log\frac{|D\bar G(x)|_{T_xW}|}{|D\bar G(y)|_{T_yW}|} \leq Qd(\bar Gx,\bar Gy)^{\alpha^2}$;
\item\label{t-growth}
$\sum_{j=k}^{\tau(x)} (\lambda^u-\Delta)(g^j(x)) \geq \nu$ and $\sum_{j=k}^{\tau(x)} \lambda^s(g^j(x)) \leq -\nu$ for all $x\in W$ and $0\leq k\leq \tau(x)$.
\end{enumerate}
\end{lemma}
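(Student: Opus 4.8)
The plan is to bootstrap from \ref{C2} and \ref{C3}---which only describe curves whose image enters $Z$, and which only give the \emph{bounded-defect} inequality \eqref{eqn:bdd-defect} rather than genuine expansion---to the full statement, using the uniform hyperbolicity of $g=f$ on $U\setminus Z$. Write $\chi>1$ for the hyperbolicity constant of \eqref{eqn:axioma} in the adapted metric, so that on $U\setminus Z$ one has $\lambda^u\geq\log\chi$, $\lambda^s\leq-\log\chi$, and $\Delta\equiv 0$ (domination is automatic there). Fix $N_0\in\NN$ with $N_0\log\chi>C$; by the last clause of \ref{C3} we may take $N_0$ small enough that every $f$-orbit which has just left $Z$ stays in $U\setminus Z$ for at least $N_0$ more iterates. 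I will also enlarge $L$ and $Q$, and shrink $\eps$ so that $2\eps<1$, if necessary, so that $L$ bounds the H\"older curvature and $Q$ the logarithmic distortion produced by the graph transform of $g|_{U\setminus Z}$ over $N_0$ consecutive iterates; this is harmless, since in the applications one verifies \ref{C1}--\ref{C3} for constants chosen this way.

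If $g(W)\cap Z=\emptyset$, then $g(W)\subset U\setminus Z$, still has H\"older curvature $\leq L$, and we partition it into subcurves of length in $[\eps,2\eps]$ and pull these back through $g$: this yields disjoint $W_j\subset W$ with $\tau_j\equiv 1$ and $\bar G=g$, for which \ref{t-largereturns} is immediate, \ref{t-integrable} holds with $p(1)=1$, \ref{t-distortion} is the standard bounded-distortion estimate for $g$ along a curve tangent to $K^u$ (valid with exponent $\alpha$, hence \emph{a fortiori} with $\alpha^2$, since all relevant distances are $\leq 2\eps<1$), and \ref{t-growth} holds with $\nu=\log\chi$ because $W\cup g(W)\subset U\setminus Z$. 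If instead $g(W)\cap Z\neq\emptyset$, apply \ref{C2} to get an admissible decomposition $\{W_j,\tau_j\}$ with $\bar G(W_j)=g^{\tau_j}(W_j)\in\tilde\AAA\subset U\setminus Z$ satisfying \ref{integrable}--\ref{distortion} of \ref{C2} (with return-time envelope $p_0$) and, by \ref{C3}, the inequalities \eqref{eqn:bdd-defect}. For those $W_j$ whose orbit reaches $\bar G(W_j)$ without ever meeting $Z$, every point $g^{j'}(x)$ with $0\leq j'\leq\tau_j$ lies in $U\setminus Z$, so \eqref{eqn:bdd-defect} automatically improves to $\sum_{j'=k}^{\tau_j}(\lambda^u-\Delta)(g^{j'}x)\geq\log\chi$ and $\sum_{j'=k}^{\tau_j}\lambda^s(g^{j'}x)\leq-\log\chi$, and we keep $\bar G$ on $W_j$. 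For every other $W_j$, the curve $\bar G(W_j)$ has just exited $Z$, so $g^i(\bar G(W_j))\subset U\setminus Z$ for all $0\leq i\leq N_0$; subdivide $\bar G(W_j)$ into pieces $V_{j,k}$ so that each $g^{N_0}(V_{j,k})$ has length in $[\eps,2\eps]$, put $W_{j,k}:=(\bar G|_{W_j})^{-1}(V_{j,k})$, $\hat\tau\equiv\tau_j+N_0$ on $W_{j,k}$, and $\hat G:=g^{N_0}\circ\bar G$. Then $\hat G(W_{j,k})=g^{N_0}(V_{j,k})\in\tilde\AAA$ (admissible length by construction, curvature $\leq L$ by the enlarged bound, inside $U\setminus Z$); \ref{t-integrable} follows from $\{\hat\tau=t\}=\{\tau=t-N_0\}$, so the lemma's envelope $p$ may be taken as a suitable $N_0$-shift of $p_0$ (combined with the indicator of $\{1\}$ from the first case), which preserves integrability since $\sum_t tp_0(t)<\infty$ forces $\sum_t p_0(t)<\infty$; \ref{t-distortion} follows from a chain-rule split of $\log\frac{|D\hat G(x)|}{|D\hat G(y)|}$ into the distortion of $g^{N_0}$ along $\bar G(W_j)$ plus that of $\bar G$ along $W_j$ (the latter controlled via \ref{distortion} of \ref{C2} and the backward contraction of $g^{-N_0}$ on $U\setminus Z$), again weakening $\alpha$ to $\alpha^2$ since all distances are $<1$; and \ref{t-growth} holds with $\nu:=\min(\log\chi,\,N_0\log\chi-C)>0$, since for $0\leq k\leq\tau_j$ the sum $\sum_{j'=k}^{\hat\tau(x)}(\lambda^u-\Delta)(g^{j'}x)$ is at least $-C$ (from \eqref{eqn:bdd-defect}) plus $N_0\log\chi$ (from the appended iterates in $U\setminus Z$), while for $\tau_j<k\leq\hat\tau(x)$ it runs over at least one point of $U\setminus Z$ and so is $\geq\log\chi$; the $\lambda^s$-estimates are symmetric. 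Taking this common $\nu$ and the combined envelope $p$ finishes the proof.

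The heart of the argument---and the only step that is more than bookkeeping---is the passage from the bounded-defect inequality \eqref{eqn:bdd-defect}, which merely forbids a catastrophic loss of expansion during a visit to $Z$, to the \emph{strictly positive} bound \ref{t-growth}, uniformly over \emph{all} truncation indices $k$ (including those whose orbit is deep inside $Z$). This is achieved by running the induced map for $N_0$ extra uniformly hyperbolic iterates, which recover the defect $C$ and leave a definite surplus; the quantitative compatibility of ``$N_0\log\chi>C$'' with ``those $N_0$ iterates remain in $U\setminus Z$'' is exactly what the hypothesis ``every orbit of $f$ leaving $Z$ takes more than $C/\log\chi$ iterates to return to $Z$'' in \ref{C3} is designed to supply. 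Everything else---re-subdividing $g^{N_0}(\bar G(W_j))$ to restore the length normalization (which forces the mild enlargement of $L,Q$ and the cosmetic loss of H\"older exponent from $\alpha$ to $\alpha^2$), preserving the curvature and distortion bounds, and checking integrability of the $N_0$-shifted return-time envelope---is routine.
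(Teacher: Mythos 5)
Your proposal is correct and takes essentially the same route as the paper's proof: there, too, one starts from the \ref{C2} decomposition, appends a fixed number $n$ of extra iterates outside $Z$ (with $n$ the minimum return time to $Z$, so $n\log\chi>C$ by the last clause of \ref{C3}), re-subdivides the images to restore the length normalization in $\tilde\AAA$, takes the envelope to be a shift of the one from \ref{C2}, chains the \ref{C2}\ref{distortion} bound with the one-step distortion and backward contraction on $U\setminus Z$ to get the $\alpha^2$-estimate, and sets $\nu=n\log\chi-C$. Your minor deviations --- handling separately the pieces whose orbits never meet $Z$ and taking $\nu=\min(\log\chi,\,N_0\log\chi-C)$ --- are cosmetic and, if anything, slightly more careful than the paper's uniform treatment.
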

\begin{proof}
Recall that an admissible decomposition of $W\in \tilde{A}$ is a partition (mod zero with respect to $m_W$) of $W$ into subsets $W_j$, together with an assignment of an `inducing time' $\tau_j$ to each $W_j$ such that $g^{\tau_j}(W_j) \subset U\setminus Z$; thus we must define both the partition $W_j$ and the inducing function $\tau$ (which is constant on each $W_j$).
If $g(W)$ does not intersect $Z$ then it suffices to take $\tau\equiv 1$ and partition $W$ into either 1 or 2 pieces, depending on whether $\bar G(W) = g(W)$ has length greater or less than $2\eps$.

If $g(W)$ does intersect $Z$, then start with the decomposition $W = \bigsqcup_j W_j$ from \ref{C2}.  Let $\hat\tau\colon W\to \NN$ be the inducing function given there, and $\hat p\colon \NN\to [0,2\eps)$ be the probability envelope.  Let $n$ be the minimum time it takes for an $f$-orbit leaving $Z$ to return to $Z$.  Note that by \ref{C3} we have $n>C/\log \chi$.  Let $\tau = \hat \tau + n$ and let $p(t)= \hat p(t-n)$.  Convergence of $\sum tp(t)$ follows from convergence of $\sum t \hat p(t)$.

To get the desired decomposition, note that for all $j$ we have $\hat G(W_j) = g^n(g^{\hat\tau_j}(W_j))\subset U\setminus Z$.  By invariance of $K^u$ we see that all the tangent vectors to $\hat G(W_j)$ lie in $K^u$, and thus $\hat G(W_j)$ can be decomposed into a disjoint union $\bigsqcup_\ell \hat G(W_{j,\ell})$, where each $\hat G(W_{j,\ell})$ is in $\tilde\AAA$.  Thus $W = \bigsqcup_{j,\ell} W_{j,\ell}$ is the desired decomposition and \ref{t-largereturns} is verified.

Condition \ref{t-growth} follows from \ref{C3}, putting $\nu = n\log \chi - C$.  So it only remains to prove the bounded distortion condition \ref{t-distortion}.

Recall that the H\"older curvature of every $W\in \AAA$ is bounded by $L$.  Thus given $W\in \AAA$ and two nearby points $x,y\in W$, the Grassmanian distance between $T_x W$ and $T_y W$ is bounded above by $L' d(x,y)^\alpha$.  Because $f$ is $C^{1+\alpha}$ and $M$ is compact, this gives
\begin{equation}\label{eqn:Dgxy}
|Dg(x)|_{T_xW} - Dg(y)|_{T_yW}| \leq K d(x,y)^{\alpha^2}
\end{equation}
for some uniform constant $K$.  As long as $W$ lies outside of $Z$, we can use uniform expansion together with the observation that $\log a - \log b \leq (a-b) \frac 1b$ whenever $a>b$ to get
\begin{equation}\label{eqn:logDgxy}
\log \frac{|Dg(x)|_{T_xW}|}{|Dg(y)|_{T_yW}|} \leq K d(x,y)^{\alpha^2}.
\end{equation}
We also observe that there is $\hat\chi<1$ such that for every $t\in \NN$, every $x,y\in W_j\subset W(t)$, and every $k\geq 0$ such that $g^{t-k}(x) \in U\setminus Z$, we have
\begin{equation}\label{eqn:dfkx}
d(g^{t-k}(x), g^{t-k}(y)) \leq \hat\chi^k d(g^t(x),g^t(y)) = \hat\chi^k d(\bar G(x), \bar G(y)).
\end{equation}
For convenience of notation, given $j\in \NN$ we write
\[
D_j g(x) = Dg(g^j(x))|_{T_{g^j(x)}(g^j(W))}.
\]
Then we can use \eqref{eqn:dfkx} together with \ref{C2}\ref{distortion} and \eqref{eqn:logDgxy} to get
\begin{align*}
\log\frac{|D\bar G(x)|_{T_xW}|}{|D\bar G(y)|_{T_yW}|} &\leq
\log\frac{|Dg^{\hat\tau}(x)|_{T_xW}|}{|Dg^{\hat\tau}(y)|_{T_yW}|}
+ \sum_{k=1}^n \log \frac{|D_{\hat\tau + n-k}g(x)|}{|D_{\hat\tau + n-k}g(y)|} \\
&\leq Q \hat\chi^{\alpha n} d(\bar G(x), \bar G(y))^\alpha + \sum_{k=1}^n K \hat\chi^{k\alpha^2} d(\bar G(x), \bar G (y))^{\alpha^2},
\end{align*}
which suffices to complete the proof of Lemma \ref{lem:local-growth}.
\end{proof}

Now we can iterate Lemma \ref{lem:local-growth} and follow a procedure similar to the one in \cite{BV00}.  Given $W\in \tilde\AAA$, let $W = \bigsqcup_{j_1=1}^{k_1} W(j_1)$ be the partition given by Lemma \ref{lem:local-growth}.  Then for every $j_1$, the curve $\bar G(W(j_1))$ is in $\tilde\AAA$, and so the lemma can be applied to this curve as well, giving a decomposition $W(j_1) = \bigsqcup_{j_2=1}^{k_2} W(j_1,j_2)$, where each $\bar G^2(W(j_1,j_2))$ is in $\tilde\AAA$.  (Note that $k_2$ may depend on $j_1$.)

To simplify notation we write $\jj=(j_1,\dots,j_n)$ and $|\jj|=n$.  Iterating the above procedure yields a partition $W = \bigsqcup_{\jj} W(\jj)$ such that $\bar G^n (W(\jj))\in \tilde\AAA$.  Moreover, writing $T(\jj) = \sum_{i=1}^{|\jj|} \tau_{j_i}$, Lemma \ref{lem:local-growth}\ref{t-growth} yields
\begin{equation}\label{eqn:nth-return}
\sum_{i=0}^{T(\jj)} (\lambda^u(g^ix) - \Delta(g^ix)) \geq \nu |\jj|,\qquad
\sum_{i=0}^{T(\jj)} \lambda^s(g^ix) \leq -\nu |\jj|.
\end{equation}
Given $\jj$ with $|\jj| = n$ and any $x,y\in W(\jj)$, let $\gamma$ be a path on $\bar G^n(W(\jj))$ that connects $\bar G^n(x)$ to $\bar G^n(y)$.  Then there is a path $\eta$ on $W(\jj)$ such that $\bar G^n(\eta) = \gamma$, and by \eqref{eqn:nth-return}, the lengths of $\eta$ and $\gamma$ are related by
\[
|\eta| \leq e^{-\nu n} |\gamma|.
\]
Writing $d_W$ for distance on $W$, this implies that
\[
d_{W(\jj)}(x,y) \leq e^{-\nu n} d_{\bar G^nW(\jj)}(\bar G^n(x), \bar G^n(y)).
\]
Because $W(\jj)$ and $\bar G^nW(\jj)$ both have H\"older curvature bounded by $L$, there is a constant $L'>0$ such that
\begin{equation}\label{eqn:induced-contract}
d(x,y) \leq L'e^{-\nu |\jj|} d(\bar G^nx, \bar G^ny)
\end{equation}
whenever $x,y\in W(\jj)$, where $d$ is the usual metric on $M$.  We can use this to get the  following bounded distortion control.

\begin{lemma}\label{lem:distortion}
There exists $K\in \RR$ such that given $W\in \tilde\AAA$ and $x,y \in W(\jj)$, we have
\begin{equation}\label{eqn:distortion}
K^{-1} \leq \frac{|Dg^{T(\jj)}(x)|_{T_xW}|}{|Dg^{T(\jj)}(y)|_{T_yW}|} \leq K.
\end{equation}
\end{lemma}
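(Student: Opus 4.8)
The plan is to express the induced map $g^{T(\jj)}$ on $W(\jj)$ as the composition $\bar G^{|\jj|}$ of the successive return maps, use the chain rule to reduce the log-ratio of derivatives to a telescoping sum of one-step contributions, bound each contribution by the one-step distortion estimate from Lemma~\ref{lem:local-growth}\ref{t-distortion}, and control the resulting sum as a geometric series using the backward contraction estimate \eqref{eqn:induced-contract}.

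Concretely, fix $W\in\tilde\AAA$ and $\jj=(j_1,\dots,j_n)$. Recall from the construction of the partition $W=\bigsqcup_{\jj} W(\jj)$ that $W(\jj)\subset W(j_1,\dots,j_{i+1})$ and that $\bar G^i\big(W(j_1,\dots,j_{i+1})\big)$ is a single atom of the admissible decomposition furnished by Lemma~\ref{lem:local-growth} applied to the curve $\bar G^i\big(W(j_1,\dots,j_i)\big)\in\tilde\AAA$. Hence for $x,y\in W(\jj)$ the points $\bar G^i x,\bar G^i y$ lie in one such atom, so Lemma~\ref{lem:local-growth}\ref{t-distortion} applied to the $(i+1)$-st return map (and, by symmetry in $x,y$, to its reciprocal) yields
\[
\Big|\log\frac{|D\bar G(\bar G^i x)|_{T_{\bar G^i x}(\bar G^i W)}|}{|D\bar G(\bar G^i y)|_{T_{\bar G^i y}(\bar G^i W)}|}\Big|\le Q\,d(\bar G^{i+1}x,\bar G^{i+1}y)^{\alpha^2}.
\]
Since $W$ is a curve the tangent spaces here are one-dimensional and the derivatives are scalars, so the chain rule gives $Dg^{T(\jj)}(x)|_{T_xW}=\prod_{i=0}^{n-1}D\bar G(\bar G^i x)|_{T_{\bar G^i x}(\bar G^i W)}$, and summing the displayed inequality over $0\le i\le n-1$ produces
\[
\Big|\log\frac{|Dg^{T(\jj)}(x)|_{T_xW}|}{|Dg^{T(\jj)}(y)|_{T_yW}|}\Big|\le Q\sum_{i=0}^{n-1} d(\bar G^{i+1}x,\bar G^{i+1}y)^{\alpha^2}.
\]

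To estimate the sum, note that $\bar G^{i+1}\big(W(j_1,\dots,j_{i+1})\big)\in\tilde\AAA$ and that iterating the decomposition procedure on this curve reproduces the sets $\bar G^{i+1}\big(W(j_1,\dots,j_k)\big)$ for $k\ge i+1$; thus $\bar G^{i+1}x,\bar G^{i+1}y$ form an admissible pair in some $W'(\jj')$ with $|\jj'|=n-i-1$, and \eqref{eqn:induced-contract} gives $d(\bar G^{i+1}x,\bar G^{i+1}y)\le L'e^{-\nu(n-i-1)}d(\bar G^n x,\bar G^n y)$. Finally $d(\bar G^n x,\bar G^n y)\le 2\eps$ since $\bar G^n(W(\jj))\in\tilde\AAA$ has length at most $2\eps$. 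Substituting, the right-hand side above is at most $Q(2\eps L')^{\alpha^2}\sum_{m\ge 0}e^{-\nu\alpha^2 m}=Q(2\eps L')^{\alpha^2}(1-e^{-\nu\alpha^2})^{-1}$, a constant depending only on $Q,\eps,L',\nu,\alpha$; taking $K$ to be its exponential gives \eqref{eqn:distortion}.

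The argument is mostly bookkeeping once this telescoping/geometric-series structure is identified, and the only point that genuinely requires care is the ``self-similarity'' of the iterated decomposition: one must verify that each image $\bar G^i(W(\jj))$ really does sit inside a single atom of the Lemma~\ref{lem:local-growth} decomposition of the corresponding level-$i$ curve, so that both \ref{t-distortion} and the contraction bound \eqref{eqn:induced-contract} can legitimately be invoked at every intermediate level. This is immediate from the way $W=\bigsqcup_{\jj}W(\jj)$ was constructed, but it is the step that should be made explicit.
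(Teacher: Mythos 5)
Your proof is correct and follows essentially the same route as the paper: decompose $Dg^{T(\jj)}|_{T_xW}$ via the chain rule into the one-step factors $D\bar G(\bar G^i x)$, bound each log-ratio by Lemma \ref{lem:local-growth}\ref{t-distortion}, and sum the resulting terms as a geometric series using the contraction estimate \eqref{eqn:induced-contract} applied along the iterated decomposition. Your explicit remarks on the self-similarity of the decomposition and the bound $d(\bar G^n x,\bar G^n y)\le 2\eps$ only spell out steps the paper leaves implicit.
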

\begin{proof}
Given $x,y\in W(\jj)$, and $0\leq i < n$, we adopt the shorthand notation
\[
D_i \bar G(x) = D\bar G(\bar G^i (x))|_{T_{\bar G^i (x)}(\bar G^i (W))},
\]
so that
\begin{equation}\label{eqn:DiGx}
Dg^{T(\jj)}(x)|_{T_x W} = \prod_{i=0}^{n-1} D_i\bar G(x).
\end{equation}
We see from \eqref{eqn:induced-contract} that 
\begin{equation}\label{eqn:dGix}
d(\bar G^i(x),\bar G^i(y)) \leq L'e^{-\nu(n-i)} d(g^{T(\jj)}(x), g^{T(\jj)}(y)). 
\end{equation}
Now Lemma \ref{lem:local-growth}\ref{t-distortion}, together with \eqref{eqn:DiGx} and \eqref{eqn:dGix}, yields
\begin{multline*}
\log\frac{|Dg^{T(\jj)}(x)|_{T_xW}|}{|Dg^{T(\jj)}(y)|_{T_yW}|}
=
\sum_{i=0}^{n-1} \log\frac{|D_i\bar G(x)|}{|D_i\bar G(y)|} \\
\leq \sum_{i=0}^{n-1} Q d(\bar G^{n-i}(x), \bar G^{n-i} (y))^{\alpha^2} 
\leq \sum_{i=0}^{n-1} Q e^{-\nu \alpha^2 i} (L')^{\alpha^2} d(g^{T(\jj)}(x),g^{T(\jj)}(y))^{\alpha^2}.
\end{multline*}
This completes the proof since $\sum e^{-\nu\alpha^2 i}$ converges and the roles of $x,y$ are symmetric.
\end{proof}

Now given $W\in \tilde\AAA$ and $x\in W$, let $j_1(x),j_2(x),\dots$ be such that $x\in W(j_1(x),\dots,j_n(x))$ for all $n$.  Define a sequence of $\NN$-valued random variables $t_1,t_2,\dots$ on $(W,m_W)$ by $t_n(x) = \tau_{j_n(x)}$.  

\begin{proposition}\label{prop:pre-martingale}
With $p,\eps$ as in Lemma \ref{lem:local-growth} and $K$ as in Lemma \ref{lem:distortion}, we have $\PP[t_n = T \mid t_1,\dots, t_{n-1}] \leq Kp(T)$.
\end{proposition}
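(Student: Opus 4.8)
The plan is to reduce the claimed inequality to a single cell-by-cell estimate and then feed in Lemmas \ref{lem:local-growth} and \ref{lem:distortion}. First I would record that, for each fixed $n$, the partition of $W$ into the cells $W(\jj')$ with $|\jj'| = n-1$ is countable and measurable, so $t_n$ is measurable, and that this partition refines the one generating $\sigma(t_1,\dots,t_{n-1})$: every atom $\mathcal A$ of $\sigma(t_1,\dots,t_{n-1})$ is the disjoint union of those cells $W(\jj')$ on which $(\tau_{j_1},\dots,\tau_{j_{n-1}})$ takes a prescribed value. Cells of zero $m_W$-measure may be discarded, since the decompositions of Lemma \ref{lem:local-growth} are only defined mod zero. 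The conditional probability $\PP[t_n = T\mid t_1,\dots,t_{n-1}]$ is then constant on each $\mathcal A$, with value
\[
\frac{\sum_{\jj'\subset\mathcal A} m_W(\{x\in W(\jj') : t_n(x) = T\})}{\sum_{\jj'\subset\mathcal A} m_W(W(\jj'))},
\]
so it suffices to establish the cellwise bound
\[
m_W(\{x\in W(\jj') : t_n(x) = T\}) \leq K\,p(T)\,m_W(W(\jj')) \qquad\text{for all } \jj' \text{ with } |\jj'| = n-1;
\]
a quotient of sums with a uniform termwise ratio bound inherits that bound.

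To prove the cellwise bound I would transport it along the induced map to the curve $V := \bar G^{n-1}(W(\jj'))$, which lies in $\tilde\AAA$ by construction of the cells and on which Lemma \ref{lem:local-growth}\ref{t-integrable} is available. The map $\Phi := \bar G^{n-1}|_{W(\jj')} = g^{T(\jj')}|_{W(\jj')}$ is a diffeomorphism of $W(\jj')$ onto $V$, and by the way the cells are built, $W(\jj', j_n)$ is precisely the $\Phi$-preimage of the $j_n$-th piece in the Lemma \ref{lem:local-growth} decomposition of $V$, which carries inducing time $\tau_{j_n}$; hence $\Phi(\{x\in W(\jj') : t_n(x) = T\}) = \{y\in V : \tau(y) = T\}$. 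By Lemma \ref{lem:distortion} (with $\jj = \jj'$) the one-dimensional Jacobian $|D\Phi|$ satisfies $\sup_{W(\jj')}|D\Phi| \le K\inf_{W(\jj')}|D\Phi|$, so from the change of variables $m_V(\Phi(E)) = \int_E |D\Phi|\,dm_W$ we obtain, for every measurable $E\subset W(\jj')$,
\[
\frac{m_W(E)}{m_W(W(\jj'))} \le K\,\frac{m_V(\Phi(E))}{m_V(V)}.
\]
Taking $E = \{x\in W(\jj') : t_n(x) = T\}$ and applying Lemma \ref{lem:local-growth}\ref{t-integrable} to $V$ (which gives $m_V(\Phi(E)) \le p(T)\,m_V(V)$) yields the cellwise bound, and with the first paragraph this proves the proposition.

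I do not expect a genuine obstacle: the statement is essentially a bookkeeping consequence of the two preceding lemmas, Lemma \ref{lem:distortion} supplying the bounded-distortion control that lets one move a measure estimate between $W(\jj')$ and its image $V$, and Lemma \ref{lem:local-growth}\ref{t-integrable} supplying the estimate on $V$. The only point needing a little care is the reduction in the first paragraph — that bounding the ratio of $m_W$-measures on each individual cell $W(\jj')$ suffices, rather than bounding it directly on each atom of the conditioning $\sigma$-algebra — together with the routine observations that $t_n$ is measurable and that null cells can be ignored.
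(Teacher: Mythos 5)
Your proof is correct and follows essentially the same route as the paper: reduce the conditional probability to a cellwise ratio bound over the cells $W(\jj')$ with $|\jj'|=n-1$, then transport that ratio to $V_{\jj'}=\bar G^{n-1}(W(\jj'))\in\tilde\AAA$ using the bounded distortion of Lemma \ref{lem:distortion} and apply the decomposition estimate (Lemma \ref{lem:local-growth}\ref{t-integrable}) on $V_{\jj'}$. The only difference is that you spell out the $\sigma$-algebra/quotient-of-sums reduction and the change-of-variables step, which the paper leaves implicit.
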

\begin{proof}
Observe that $t_n$ is constant on $W(\jj)$ whenever $|\jj|\geq n$, and so
\begin{equation}\label{eqn:expectation}
\PP[t_n=T\mid t_1,\dots, t_{n-1}] \leq \sup_{|\jj|=n-1} \frac{m_W(W(\jj,T))}{m_W(W(\jj))}.
\end{equation}
By Lemma \ref{lem:distortion}, for every $\jj$, the map $g^{T(\jj)}$ carries $W(\jj)$ to $V_\jj := g^{T(\jj)}W(\jj) \in \tilde\AAA$ with distortion bounded by $K$, and so in particular for $|\jj|=n-1$ we have
\[
\frac{m_W(W(\jj,T))}{m_W(W(\jj))}
\leq
K \frac{m_{V_\jj}(V_\jj(T))}{m_{V_\jj}(V_\jj)}
\leq Kp(T),
\]
where the last inequality uses \ref{C2}.
\end{proof}

\subsection{Asymptotic averages of return times}

Now we are in a position to prove that the asymptotic average of the return times $t_n$ is bounded for $m_W$-a.e.\ initial condition.  The arguments used here are well-known, but we give full details as our setting differs from that in which the strong law of large numbers is usually proved.  We follow \cite[Theorem 22.1]{pB79}, which gives an argument that goes back to Etemadi.

Consider the probability space $(W,m_W)$, where $W\in \tilde\AAA$ is as in the previous section and we take $m_W$ to be normalized.  Let $\FFF_n$ be the increasing sequence of $\sigma$-algebras generated by the sets $W(j_1,\dots, j_n)$.  By Condition \ref{C2} and Proposition \ref{prop:pre-martingale}, we can choose $p\colon \NN \to [0,1]$ such that 
\begin{align}
\label{eqn:Ptn}
\PP[t_n=T \mid \FFF_{n-1}] &\leq p(T), \\
\label{eqn:R}
R := \sum_{T=1}^\infty Tp(T) &< \infty.
\end{align}
Note that the ``new'' $p(t)$ is obtained by multiplying the ``old'' one from Condition \ref{C2} by the distortion constant $K$ coming from Lemma \ref{lem:distortion}.

\begin{proposition}\label{prop:slln}
If $t_n$ is any sequence of random variables satisfying \eqref{eqn:Ptn} and \eqref{eqn:R}, then $\ulim_{n\to\infty} \frac 1n \sum_{k=1}^n t_k \leq R$ almost surely.
\end{proposition}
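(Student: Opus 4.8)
The plan is to follow the classical route to the strong law of large numbers --- truncation, a conditional second--moment estimate, and Borel--Cantelli along a geometric subsequence, as in \cite[Theorem 22.1]{pB79} --- adapted to our non-i.i.d.\ setting, where the only available information is the conditional bound \eqref{eqn:Ptn}. The point is that \eqref{eqn:Ptn} says that $t_k$ is, conditionally on $\FFF_{k-1}$, stochastically dominated by a fixed distribution whose tail is governed by $p$, so $p$ plays the role of a common law in the usual arguments. \textbf{Step 1 (truncation).} Set $t_k' = t_k\one_{\{t_k\le k\}}$. Since $t_k\in\NN$, we have $\EE[t_k'\mid\FFF_{k-1}]=\sum_{T=1}^k T\,\PP[t_k=T\mid\FFF_{k-1}]\le\sum_{T\ge1}Tp(T)=R$ and $\EE[(t_k')^2\mid\FFF_{k-1}]\le\sum_{T=1}^k T^2p(T)=:a_k<\infty$. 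From \eqref{eqn:Ptn}, $\PP[t_k\ne t_k']\le\sum_{T>k}p(T)$, and $\sum_{k\ge1}\sum_{T>k}p(T)=\sum_{T\ge1}(T-1)p(T)\le R<\infty$, so by Borel--Cantelli $t_k=t_k'$ for all large $k$, $m_W$-a.s. Hence it suffices to prove $\ulim_n\frac1n\sum_{k=1}^n t_k'\le R$ a.s.

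\textbf{Step 2 (reduction to a martingale law of large numbers).} Put $d_k=t_k'-\EE[t_k'\mid\FFF_{k-1}]$ and $M_n=\sum_{k=1}^n d_k$. Each $t_k'$ is $\FFF_k$-measurable (being a function of $(j_1(x),\dots,j_k(x))$) and bounded, and $\EE[d_k\mid\FFF_{k-1}]=0$, so $(M_n,\FFF_n)$ is an $L^2$ martingale; the compensator $C_n:=\sum_{k=1}^n\EE[t_k'\mid\FFF_{k-1}]$ satisfies $C_n\le Rn$ pointwise. Since $\frac1n\sum_{k=1}^n t_k'=\frac{M_n}{n}+\frac{C_n}{n}$, it remains only to show $M_n/n\to0$ a.s., for then $\ulim_n\frac1n\sum_{k=1}^n t_k'\le\ulim_n\frac{C_n}{n}\le R$.

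\textbf{Step 3 ($L^2$ estimates and Borel--Cantelli).} Orthogonality of martingale increments gives $\EE[M_n^2]=\sum_{k=1}^n\EE[d_k^2]\le\sum_{k=1}^n a_k$, and the decisive bound is
\[
\sum_{k\ge1}\frac{a_k}{k^2}=\sum_{T\ge1}T^2p(T)\sum_{k\ge T}\frac1{k^2}\le 2\sum_{T\ge1}Tp(T)=2R<\infty .
\]
Fix $\beta>1$ and $n_j=\lfloor\beta^j\rfloor$. By Chebyshev, $\PP[|M_{n_j}|>\eps n_j]\le\eps^{-2}n_j^{-2}\sum_{k=1}^{n_j}a_k$; summing over $j$, exchanging the order of summation, and using $\sum_{j:\,n_j\ge k}n_j^{-2}\le C_\beta k^{-2}$ together with monotonicity of $a_k$, gives $\sum_j\eps^{-2}n_j^{-2}\EE[M_{n_j}^2]\le C_\beta'\sum_k a_k/k^2<\infty$, so $M_{n_j}/n_j\to0$ a.s.\ by Borel--Cantelli. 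To pass to the full sequence, apply Doob's $L^2$ maximal inequality on the blocks $[n_j,n_{j+1}]$: $\EE[\max_{n_j\le n\le n_{j+1}}(M_n-M_{n_j})^2]\le4\sum_{k=n_j+1}^{n_{j+1}}a_k\le4n_{j+1}a_{n_{j+1}}$, and $\sum_j n_{j+1}a_{n_{j+1}}/n_j^2<\infty$ again follows from $\sum_k a_k/k^2<\infty$ and monotonicity of $a_k$; Borel--Cantelli then yields $n_j^{-1}\max_{n_j\le n\le n_{j+1}}|M_n-M_{n_j}|\to0$ a.s., hence $M_n/n\to0$ a.s. Combining Steps 1--3 gives the claim. (Alternatively, Step 3 may be replaced by noting that $\sum_k d_k/k$ is an $L^2$-bounded martingale, hence converges a.s., so $M_n/n\to0$ by Kronecker's lemma.)

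\textbf{Main obstacle.} The genuinely new feature compared with the i.i.d.\ case is the absence of independence: all moment bounds must be run conditionally via \eqref{eqn:Ptn}, and $M_n$ is handled as a martingale rather than as a sum of i.i.d.\ variables. The second delicate point is that the unconditional second moments $\EE[t_k^2]$ may be infinite, which forces the truncation in Step 1; the truncated conditional second moments $a_k$ may still be unbounded in $k$, so everything hinges on the estimate $\sum_k a_k/k^2\le2R<\infty$, which uses only $\sum_T Tp(T)<\infty$.
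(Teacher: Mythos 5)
Your proof is correct and follows essentially the same route as the paper's: the same truncation $t_k\one_{\{t_k\le k\}}$, conditional centering, second-moment/Chebyshev estimates, and Borel--Cantelli along a geometric subsequence, i.e.\ the Etemadi-style argument of \cite[Theorem 22.1]{pB79} run conditionally through \eqref{eqn:Ptn}. The only deviations are cosmetic and sound: you phrase the centering as a martingale with compensator $C_n\le Rn$ and recover the full sequence from the subsequence via Doob's maximal inequality (or Kronecker's lemma), where the paper instead exploits nonnegativity of its centered variables $r_n$ and monotonicity of partial sums with $\alpha\downarrow1$, and your bound $\sum_k a_k/k^2\le 2R$ replaces the paper's bookkeeping with the auxiliary variable $Y$.
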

\begin{proof}
Consider the truncated random variables $s_n = t_n \one_{[t_n\leq n]}$, and note that by \eqref{eqn:Ptn} and \eqref{eqn:R}, we have
\[
0\leq s_n \leq t_n \Rightarrow \EE[s_n \mid \FFF_{n-1}] \leq R.
\]
Now consider the random variables $r_n = s_n + R - \EE[s_n \mid \FFF_{n-1}]$.  Note that $r_n$ is $\FFF_n$-measurable, and moreover
\begin{equation}\label{eqn:rnmartingale}
\EE[r_n \mid \FFF_{n-1}] = R \text{ for all } n.
\end{equation}
Let $X_N = \frac 1N (r_1 + \cdots + r_N) - R$, so that $\EE[X_N] = 0$.  The idea is to use \eqref{eqn:rnmartingale} to obtain an efficient estimate on $\EE[X_N^2]$, which via Chebyshev's inequality gives a bound on $\PP[X_N \geq \eps]$.  A careful use of Borel-Cantelli will lead to the result.

We begin with the observation that
\begin{align*}
\EE[X_N^2] &= \frac 1{N^2} \EE\left[ \left(\sum_{k=1}^N (r_k - R)\right)^2 \right] \\
&= \frac 1{N^2} \sum_{k=1}^N \EE[(r_k - R)^2] + \frac 2{N^2}\sum_{i<j} \EE[(r_i - R)(r_j - R)].
\end{align*}
Given $i<j$, the fact that $r_i$ is $\FFF_{n-1}$-measurable together with \eqref{eqn:rnmartingale} gives $\EE[(r_i - R)(r_j-R)] = 0$, hence
\begin{equation}\label{eqn:EXN}
\EE[X_N^2] = \frac 1{N^2} \sum_{k=1}^N \EE[(r_k-R)^2] 
\leq \frac 1{N^2} \sum_{k=1}^N (\EE[r_k^2] + 2R\EE[r_k] + R^2).
\end{equation}
Let $T_0$ be such that $\sum_{T\geq T_0} p(T) \leq 1$, and let $Y$ be a random variable taking the value $T$ with probability $p(T)$ for $T\geq T_0$.  Note that by the definition of $s_n$ and $r_n$, we have $r_n \leq s_n + R \leq n+R$, thus
\[
\EE[r_k^2] = \sum_{T=1}^{k+R} T^2 \PP[r_k = T] \leq \sum_{T=1}^{k+R} T^2 p(T)
\leq C + \EE[Y^2 \one_{[Y\leq k+R]}]
\]
for some fixed constant $C$.  Note that the final expression is non-decreasing in $k$, and so together with \eqref{eqn:EXN} we have
\begin{equation}\label{eqn:EXN2}
\EE[X_N^2] \leq \frac 1N \left( C' + \EE[Y^2 \one_{[Y\leq N+R]}] \right),
\end{equation}
where again $C'$ is a fixed constant.  Given $\eps>0$, we can use \eqref{eqn:EXN2} in Chebyshev's inequality to get
\begin{equation}\label{eqn:PXN}
\PP[|X_N| \geq \eps] \leq \frac 1{\eps^2 N} \left( C' + \EE[Y^2 \one_{[Y\leq N+R]}] \right).
\end{equation}
Fix $\alpha>1$ and let $u_n = \lfloor \alpha^n \rfloor$.  Putting $K = \frac{2\alpha}{\alpha-1}$, choosing $y\in \RR$, and letting $m=m(y)$ be the smallest number such that $u_m\geq y$, we have
\[
\sum_{u_n\geq y} u_n^{-1} \leq 2\sum_{n\geq m} \alpha^{-n} = K\alpha^{-m} \leq Ky^{-1},
\]
and thus \eqref{eqn:PXN} yields
\begin{align*}
\sum_{n=1}^\infty \PP[|X_{u_n}| \geq \eps] 
&\leq 
\frac 1{\eps^2} \sum_{n=1}^\infty u_n^{-1}\left( C' + \EE[Y^2 \one_{[Y\leq u_n+R]}]\right) \\
&\leq \frac{KC'}{\eps^2} + \frac 1{\eps^2} \EE \left[ \sum_{n=1}^\infty Y^2 u_n^{-1} \one_{[Y\leq u_n+R]}\right] \\
&\leq \frac{KC'}{\eps^2} + \frac 1{\eps^2}\EE[Y^2 K(Y-R)^{-1}] < \infty,
\end{align*}
where the last inequality uses \eqref{eqn:R} and the definition of $Y$.  Since the probabilities of the events $|X_{u_n}|\geq \eps$ are summable, it follows from the first Borel--Cantelli lemma that with probability 1, only finitely many of these events occur.  In particular, we have $\ulim_{n\to\infty} |X_{u_n}| \leq \eps$ almost surely; in terms of the random variables $r_n$, this means that
\[
\ulim_{n\to\infty} \left|\left(\frac 1{u_n} \sum_{k=1}^{u_n} r_k\right) - R \right| \leq \eps \text{ almost surely}.
\]
Taking an intersection over all rational $\eps>0$ gives
\begin{equation}\label{eqn:alongun}
\lim_{n\to\infty} \frac 1{u_n} \sum_{k=1}^{u_n} r_k = R \text{  a.s.}
\end{equation}
Let $Z_k = \sum_{i=1}^k r_k$.  Because $r_k\geq 0$ we have $Z_{u_n} \leq Z_k \leq Z_{u_{n+1}}$ for all $u_n\leq k\leq u_{n+1}$, and in particular
\[
\frac {u_n}{u_{n+1}} \frac{Z_{u_n}}{u_n} \leq \frac{Z_k}{k} \leq \frac{u_{n+1}}{u_n} \frac{Z_{u_{n+1}}}{u_{n+1}}.
\]
Taking the limit and using \eqref{eqn:alongun} gives
\[
\frac 1\alpha R \leq \llim_{k\to\infty} \frac 1k Z_k \leq \ulim_{k\to\infty} \frac 1k Z_k \leq \alpha R \text{ a.s.}
\]
Taking an intersection over all rational $\alpha>1$ gives
\begin{equation}\label{eqn:rnaverages}
\lim_{n\to\infty} \frac 1n \sum_{k=1}^n r_k = R \text{ a.s.}
\end{equation}
Finally, we recall from the definition of $s_n$ and $r_n$ that $t_n\leq r_n$ whenever $t_n\leq n$.  In particular, we may observe that
\[
\sum_{n=1}^\infty \PP[t_n > r_n] \leq \sum_{n=1}^\infty \PP[t_n > n] \leq \EE[t_n] \leq R
\]
and apply Borel-Cantelli again to deduce that with probability one, $t_n > r_n$ for at most finitely many values of $n$.  In particular, \eqref{eqn:rnaverages} implies that
\begin{equation}\label{eqn:tnaverages}
\ulim_{n\to\infty} \frac 1n \sum_{k=1}^n t_k \leq R \text{ a.s.}
\end{equation}
which completes the proof of Proposition \ref{prop:slln}.
\end{proof}

\subsection{Positive rate of effective hyperbolicity}

It remains to verify the conditions of Theorem \ref{thm:main3}.\footnote{The argument that follows could also be given using the simpler Theorem \ref{thm:main2} -- that is, using \ref{eqn:eh1} instead of \ref{eqn:eh3} -- but it would require a more restrictive Condition \ref{C3}.}  Recall that $\hat S$ is the set of points $x\in W$ with $T_x W \subset K^u(x)$ for which $\Gamma_\bl^e(x) \cap \Gamma_{\JJ,q}^s(x)$ has uniformly (in $q$) positive lower asymptotic density (for some $\JJ = (C,\bl)$), and for which \ref{eqn:eh2} holds.  The sets $\Gamma_\bl^e(x)$ and $\Gamma_{\JJ,q}^s(x)$ are defined in \eqref{eqn:ehtimes} and \eqref{eqn:shtimes}, respectively, and in our setting can be controlled using the sequence $t_n$.

\begin{lemma}\label{lem:Chyptimes}
Let $\nu>0$ be as in Lemma \ref{lem:local-growth}\ref{t-growth} and fix $0<\bl <\nu'<\nu/R$.  If $\ulim_{n\to\infty} \frac 1n \sum_{k=1}^n t_k(x)\leq R$ then we have 
$\ld(\Gamma_\bl^e(x) \cap \Gamma_{\JJ,q}^s(x)) \geq \frac{\nu' - \bl}{L - \bl}$ for every $q$, where $L$ is a uniform bound for $\lambda^u - \Delta$ and $\lambda^s$.
\end{lemma}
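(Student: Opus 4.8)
The plan is to reduce both membership conditions to partial-sum conditions on two scalar sequences along the orbit of $x$, and then apply Pliss's lemma to their pointwise minimum. Throughout, the relevant map is the perturbation $g$, and I read $\Gamma^e_\bl(x)$ and $\Gamma^s_{\JJ,q}(x)$ from \eqref{eqn:ehtimes} and \eqref{eqn:shtimes} with $g$ in place of $f$. Writing $a_j=(\lambda^u-\Delta)(g^jx)$ and $d_j=-\lambda^s(g^jx)$, so that $\lambda(g^jx)=\min(a_j,d_j)$ and $|a_j|,|d_j|\le L$, I would first note that shrinking the constant $C$ in $\JJ=(C,\bl)$ only enlarges $\Gamma^s_{\JJ,q}(x)$, hence I may assume $C\le1$; then $n\in\Gamma^s_{\JJ,q}(x)$ whenever $\sum_{j=n-k}^{n-1}d_j\ge\bl k$ for all $1\le k\le q$, while by \eqref{eqn:ehtimes} one has $n\in\Gamma^e_\bl(x)$ exactly when $\sum_{j=k}^{n-1}a_j\ge\bl(n-k)$ for all $0\le k<n$. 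Since $\min(a_j,d_j)$ is dominated by both $a_j$ and $d_j$, any $n$ that is a $\bl$-hyperbolic time for the sequence $(\lambda(g^jx))_{j\ge0}$ --- i.e.\ $\sum_{j=k}^{n-1}\lambda(g^jx)\ge\bl(n-k)$ for all $0\le k<n$ --- automatically lies in $\Gamma^e_\bl(x)\cap\Gamma^s_{\JJ,q}(x)$ for every $q\le n$. Thus it suffices to produce such times with lower density at least $\frac{\nu'-\bl}{L-\bl}$, and for this I would invoke Pliss's lemma \cite[Lemma 11.2.6]{BP07}, whose hypothesis is the Birkhoff-average bound $\llim_{N\to\infty}\frac1N\sum_{j=0}^{N-1}\lambda(g^jx)>\nu'$.

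The main step is to establish this average bound from $\ulim_{m\to\infty}\frac1m\sum_{k=1}^mt_k(x)\le R$. Let $T_0=0$ and $T_m=\sum_{k=1}^mt_k(x)$ be the successive induced return times, so the block from $T_{m-1}$ to $T_m$ is a single step of $\bar G$; the hypothesis gives $T_m\le(R+\eps)m$ for all large $m$, for every $\eps>0$. I would use the iterated form of Lemma \ref{lem:local-growth}\ref{t-growth} (as in \eqref{eqn:nth-return}): on each block, every suffix sum of $a$ and of $d$ is at least $\nu$, and since on each block the only obstruction to $\lambda(g^jx)\ge0$ occurs on a subsegment of uniformly bounded length (the transient passages near $\partial Z$, hyperbolicity holding both deep inside $Z$ near the fixed point and throughout $U\setminus Z$), the sequence $(\lambda(g^jx))_j$ also has block suffix sums bounded below by $\nu-C_1$ for a fixed $C_1$. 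Telescoping over the first $m$ blocks gives $\sum_{j=0}^{T_m-1}\lambda(g^jx)\ge\nu m-C_2$; for $N$ strictly between consecutive return times, the partial sum can only decrease while one traverses block $m+1$, and this decrease is bounded using the within-block suffix estimate together with the decisive observation that $T_{m+1}\le(R+\eps)(m+1)$ prevents a block from simultaneously beginning late and being long. Either way $\frac1N\sum_{j=0}^{N-1}\lambda(g^jx)\ge\frac{\nu m-C_2}{(R+\eps)(m+1)}\to\frac\nu{R+\eps}$, and letting $\eps\downarrow0$ yields $\llim_N\frac1N\sum_{j=0}^{N-1}\lambda(g^jx)\ge\frac\nu R>\nu'$, since $\nu'<\nu/R$.

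With the average bound in hand, for each large $N$ Pliss's lemma applied to $(\lambda(g^jx))_{0\le j<N}$ furnishes at least $\frac{\nu'-\bl}{L-\bl}N$ indices $n\in[1,N]$ that are $\bl$-hyperbolic times for this sequence; each with $n\ge q$ then belongs to $\Gamma^e_\bl(x)\cap\Gamma^s_{\JJ,q}(x)$ by the first paragraph, and since the finitely many $n<q$ contribute a vanishing $O(q/N)$ fraction, $\ld(\Gamma^e_\bl(x)\cap\Gamma^s_{\JJ,q}(x))\ge\frac{\nu'-\bl}{L-\bl}$ for every $q$. I expect the main obstacle to be the second paragraph: the footnote to Theorem \ref{thm:perturbed} warns that $\sum_{j=k}^{\tau(x)}\lambda(g^jx)\ge-C$ does not follow from \eqref{eqn:bdd-defect} alone, so the average bound for $\lambda(g^jx)$ cannot be read off \ref{C3} directly and must instead be extracted from the \emph{simultaneous} block-level control of both directions in Lemma \ref{lem:local-growth}\ref{t-growth} together with the local structure near the neutral fixed point (which is what confines the failure of $\lambda(g^jx)\ge0$ to bounded windows), and then propagated to non-return times via the $\limsup$ constraint on the $t_k$.
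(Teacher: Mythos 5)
Your first paragraph is fine: times $n$ at which $\sum_{j=k}^{n-1}\lambda(g^jx)\ge\bl(n-k)$ for all $k<n$ do lie in $\Gamma^e_\bl(x)\cap\Gamma^s_{\JJ,q}(x)$ (for suitable $C$ in $\JJ$). The genuine gap is the second paragraph, i.e.\ the Pliss hypothesis for the sequence $\lambda(g^jx)=\min\big((\lambda^u-\Delta)(g^jx),\,-\lambda^s(g^jx)\big)$. The claim that block suffix sums of $\lambda$ are bounded below by $\nu-C_1$ does \emph{not} follow from \ref{C3} and Lemma \ref{lem:local-growth}\ref{t-growth}: those give one-sided control of $\lambda^u-\Delta$ and of $\lambda^s$ \emph{separately}, and within a block the expansion and the contraction may occur at interleaved times, in which case every suffix sum of each quantity can exceed $\nu$ while the sum of the minimum over the block is of order $-t_j$ (take $\lambda^u-\Delta$ and $-\lambda^s$ alternately equal to $-1$ and $L$). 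This is exactly the simultaneity issue the footnote to Theorem \ref{thm:perturbed} flags, and Lemma \ref{lem:Chyptimes} is invoked in the proof of Theorem \ref{thm:perturbed}, i.e.\ under \ref{C1}--\ref{C3} only; your appeal to ``the local structure near the neutral fixed point'' imports facts about the specific example of \S\ref{sec:katok} that are not among the lemma's hypotheses, and even there you assert rather than prove the confinement/boundedness you need. A secondary problem is the passage to non-return times: the constraint $T_{m+1}\le(R+\eps)(m+1)$ does not prevent $t_{m+1}$ from being of order $m$ (since $T_m$ can be as small as $\approx m$), so a mid-block deficit of size $L\,t_{m+1}$ is not negligible relative to $N$, and with only suffix-sum control on $\lambda$ the liminf of its Birkhoff averages over \emph{all} $N$ can drop below $\nu'$ even if all block totals were fine.

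The paper's proof is built precisely to avoid this. It introduces auxiliary sequences $a_k\le(\lambda^u-\Delta)(g^kx)$ and $b_k\ge\lambda^s(g^kx)$ in $[-L,L]$ whose sums over each block equal exactly $\nu$ and $-\nu$, with within-block partial sums controlled by the constant $C$ from \ref{C3}. Pliss' lemma is applied \emph{only} to $(a_k)$: the exact block sums together with $\ulim T_m/m\le R$ give $\llim\frac1k\sum_{j\le k}a_j\ge\nu/R>\nu'$ (the $O(1)$ within-block control is what makes the liminf over all times work), so the resulting Pliss times $\Gamma_a$ have density $\ge\frac{\nu'-\bl}{L-\bl}$ and lie in $\Gamma^e_\bl(x)$ since $a_j\le(\lambda^u-\Delta)(g^jx)$. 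Membership in $\Gamma^s_{\JJ,q}(x)$ for $q\le n$ is then checked separately from the block structure of $(b_k)$, which is a much weaker requirement than being a hyperbolic time for the minimum; this separation of the two directions is the whole point of replacing \ref{eqn:eh1} by \ref{eqn:eh3}. If you want to salvage your route, you must either prove the simultaneous bound $\sum_{j=k}^{\tau(x)}\lambda(g^jx)\ge-C$ as an additional hypothesis (strengthening \ref{C3}, as the footnote notes) or switch to the two-sequence argument.
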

\begin{proof}
Let $T_n(x) = \sum_{k=1}^n t_k(x)$.  By Lemma \ref{lem:local-growth}\ref{t-growth} and \ref{C3}, there are sequences $a_k,b_k\in [-L,L]$ such that the following hold:
\begin{enumerate}
\item $(\lambda^u - \Delta)(g^kx) \geq a_k$;
\item $\lambda^s(g^kx)\leq b_k$;
\item $\sum_{k=T_n(x)}^{T_{n+1}(x)-1} a_k = \nu$;
\item $\sum_{k=T_n(x)}^{T_{n+1}(x)-1} b_k = -\nu$;
\item $\sum_{k=r}^s a_k \geq -C$ for all $T_n(x) \leq r \leq s < T_{n+1}(x)$;
\item $\sum_{k=r}^s b_k \leq -C$ for all $T_n(x) \leq r \leq s < T_{n+1}(x)$.
\end{enumerate}
Now if $\ulim \frac 1n T_n(x)\leq R$, then the above imply $\llim \frac 1k \sum_{j=0}^k a_j \geq \nu/R > \nu'$.  Thus by Pliss' lemma \cite[Lemma 11.2.6]{BP07} there is $\Gamma_a\subset \NN$ with $\ld(\Gamma_a) \geq \frac{\nu' - \bl}{L-\bl}$ such that for all $n\in \Gamma_a$ and $0\leq k<n$, we have
\[
\sum_{j=k}^{n-1} (\lambda^u - \Delta)(g^jx) \geq \sum_{j=k}^{n-1} a_j \geq \bl (n-k).
\]
It follows that $\Gamma_a \subset \Gamma_\bl^e(x)$.  Moreover, the properties listed above guarantee that every $n\in \Gamma_a$ is also in $\Gamma_{\JJ,q}^s(x)$ for $q\leq n$.  This yields the desired lower bound on the asymptotic density.
\end{proof}

Combining Proposition \ref{prop:slln} and Lemma \ref{lem:Chyptimes}, we see that $m_W$-a.e.\ $x\in W$ satisfies \ref{eqn:eh3} (and has $T_x W \subset K^u(x)$), so
the only remaining problem is to estimate the angle between the stable and unstable cones.

\subsection{Angle between stable and unstable cones}

Because $g$ is a diffeomorphism and the angle between $K^s$ and $K^u$ is uniformly positive on $U\setminus Z$, we see that for every $\bar\theta>0$ there is $T$ such that if $x\in W\in \tilde \AAA$ and $\tau(x)\leq T$, then $\theta(g^kx)\geq \bar\theta$ for all $0\leq k\leq \tau(x)$.


Fix $W\in \tilde \AAA$ and let $t_n(x)$ be as in the previous section.  We conclude from the above observations that in order to bound the density of the set $\{k\in\NN \mid \theta(g^k(x)) < \bar\theta \}$, it suffices to get an upper bound for
\begin{equation}\label{eqn:tjtj}
\frac{\sum_{j=1}^m t_j(x) \one_{[t_j(x)\geq T]}}{\sum_{j=1}^m t_j(x)}
\leq \frac 1m \sum_{j=1}^m t_j(x) \one_{[t_j(x)\geq T]}.
\end{equation}
To see this, we fix $\bar\theta>0$, let $T=T(\bar\theta)$ be as above, and write $T_n(x) = \sum_{i=1}^n t_i(x)$.  For a fixed $x\in W$, let $m$ be the largest number such that $T_m(x) \leq n$, and put $d_j=1$ if $t_j(x)\geq T$, and $d_j=0$ otherwise. Then
\[
\#\{1\leq k\leq n \mid \theta(g^kx) < \bar\theta \}
\leq \bigg(\sum_{j=1}^m t_j(x) d_j(x) \bigg)
+ (n-T_m(x)) d_{m+1}(x).
\]
If $d_{m+1}=0$, we have
\[
\frac 1n \#\{1\leq k\leq n \mid \theta(g^kx) < \bar\theta \}
\leq \frac 1{T_m(x)} \sum_{j=1}^m t_j d_j,
\]
and if $d_m=1$ we have the same inequality with $m$ replaced by $m+1$ on the right-hand side.  Writing 
\[
\rho(x) = \ulim_{n\to\infty} \frac 1n \#\{1\leq k\leq n \mid \theta(g^k(x)) < \bar\theta\},
\]
this implies that
\begin{equation}\label{eqn:angle-density}
\rho(x) \leq
\ulim_{m\to\infty} \frac{\sum_{j=1}^m t_j(x) \one_{[t_j(x)\geq T]}}{\sum_{j=1}^m t_j(x)}.
\end{equation}
Fixing $T$, let $\tilde t_j = t_j \one_{[t_j\geq T]}$.  With $p(t)$ as in \eqref{eqn:Ptn} and \eqref{eqn:R}, we have
\[
\PP[\tilde t_j = t \mid \FFF_{j-1}] \leq \tilde p(t),
\]
where $\tilde p(t) = p(t)$ when $t\geq T$ and $\tilde p(t)=0$ otherwise.  In particular, for every $\eps>0$ there is $T$ such that $\sum t\tilde p(t) < \eps$.  Applying Proposition \ref{prop:slln} to $\tilde t_j$ gives
\[
\ulim \frac 1m \sum_{j=1}^m \tilde t_j \leq \eps \text{ a.s.};
\]
together with \eqref{eqn:tjtj} and \eqref{eqn:angle-density} this implies that $\rho(x) \leq \eps$ for $m_W$-a.e.\ $x\in W$.
Because $\eps>0$ can be taken arbitrarily small by sending $\bar\theta\to 0$, we conclude that $m_W$-a.e.\ $x\in W$ satisfies \ref{eqn:eh2}.  The fact that $m_W$-a.e.\ $x\in W$ satisfies \ref{eqn:eh3} was proved in the previous section, and thus Theorem \ref{thm:main3} applies, establishing the existence of an SRB measure for $g$.

\section{Proof of Theorem \ref{thm:katok}}\label{sec:katok-pf}

First we outline how the conditions \ref{C1}--\ref{C3} will be verified.  The key will be to obtain estimates on the flow generated by $\XX$, in particular on solutions $x(t)$ of the flow itself and on tangent vectors $v(t)$ that evolve under the flow.  Many of these estimates mirror the ones in \cite{aK79} for the original Katok example, but we cannot directly follow the computations there, because in our setting it is possible for vectors in $K^u$ to contract (see Remark \ref{rmk:harder}).
Roughly speaking, our estimates go in three stages.
\begin{enumerate}
\item Estimate $\theta(t)$, the angle between $x(t)$ and the unstable direction, which starts near $\pi/2$ and decays towards $0$.  This is done in \eqref{eqn:tantheta}.
\item Estimate $\rho(t)$, the angle between $v(t)$ and the unstable direction, which we want to keep small to obtain good expansion estimates.  This is done in Lemma \ref{lem:tanrho2}, and a key result is Corollary \ref{cor:tanbdd}.
\item Given two trajectories starting on the same $W_j$, estimate the difference between the corresponding tangent vectors as $t$ varies.  This is done using Lemma \ref{lem:eta'} and \eqref{eqn:vLvX}.
\end{enumerate}
The conditions \ref{C1}--\ref{C3} will be verified as follows.  For \ref{C1}, we use Lemma \ref{lem:tanrho2} that shows decay of $\rho(t)$ for large values of $t$.  For \ref{C3}, we use \eqref{eqn:intexpansion2} and Corollary \ref{cor:tanbdd}, the key being that $\int_{t_1}^{t_2} \tan\rho(t)\,dt$ is uniformly bounded.

Verification of \ref{C2} is the most involved, and is carried out in \S\S\ref{sec:expansion}--\ref{sec:bounded-distortion}.  Given $W\in \tilde\AAA$ that is about to enter $Z$, the natural partition to use is $W = \bigsqcup_j W_j$ where $W_j$ is the set of points in $W$ for which $j$ is the first return time to $U\setminus Z$ under the action of $g$.  Indeed, there is $\eps>0$ such that if we partition $f^{-1}(Z)\setminus Z$ into level sets of the return time, then any curve in $\AAA$ that crosses one of these level sets completely is mapped by $G$ to a curve of length approximately $\eps$.  The issue is that there may be some level sets that $W$ does not cross completely, so that $G(W_j)$ is too short.  In this case we join $W_j$ with the neighboring $W_i$ and increase $\tau$ by 1, so that the image under $\bar G$ has length between $\eps$ and $2\eps$.

The above procedure describes an admissible decomposition such that $\bar G(W_j)$ has the right length for each $j$.  Now \ref{C2}\ref{integrable} will come from the expansion estimate \eqref{eqn:escaping} since we can take $p(t) = t^{-(1+\frac 1\alpha)}$ (up to a constant).  The rest of \ref{C2}\ref{largereturns} will come from \eqref{eqn:Deltavt}, which gives bounds on the H\"older curvature of $\bar G(W_j)$.  Finally, \ref{C2}\ref{distortion} will come from \eqref{eqn:bdd-distortion}.

\subsection{Trajectories near the fixed point}

All of the computations and estimates described in Section \ref{sec:katok} will actually be carried out on $Y := B(0,r_0) \subset Z$, where $\XX$ has the specific form $\|x\|^\alpha A x$.  As $g$ is uniformly hyperbolic on $Z\setminus Y$ and trajectories spend a uniformly bounded time in this region, the conditions continue to hold when we consider passages through the larger region $Z$.

Let $\ph_t$ be the flow on $Z$ generated by the vector field $\XX$.  When we consider a single trajectory of the flow, we will generally write $x(t) = \ph_t(x)$ to keep the notation more compact.

We start by verifying Condition \ref{C1}.  We first study how a trajectory moves through $Z$; in particular, how the relative distance from $x$ to the stable and unstable manifolds of the fixed point varies.

More precisely, let $Y = B(0,r_0)$ so that $\XX(x) = \|x\|^\alpha Ax$ on $Y$.  Let $x\colon [0,T]\to Y$ be a trajectory of the flow determined by $\XX$, and write $x =  x_u + x_s$ for $x_u \in E^u\times\{0\}$ and $x_s\in \{0\}\times E^s$.  (Recall that $E^u = \RR$ and $E^s = \RR^{d-1}$.)  Let $\theta(t)$ be the positive angle between $x$ and $E^u\times\{0\}$, so that $\tan\theta(t) = \|x_s(t)\|/\|x_u(t)\|$.  Let $T_0$ be the time at which $x$ leaves $Y$, so $\|x(T_0)\| = r_0$ and $x(t)\in Y$ for all $t\in [0,T_0]$.  Let $\lambda=\beta+\gamma$.

\begin{lemma}\label{lem:tantheta}
On $t\in [0,T_0]$, we have $(\tan\theta)' = -\lambda\|x\|^\alpha\tan\theta$.
\end{lemma}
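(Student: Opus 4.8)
The plan is to reduce everything to a one-line ODE computation using the product structure $A = A_u \oplus A_s$ and the conformality assumptions $A_u = \gamma\Id_u$, $A_s = -\beta\Id_s$. Writing a trajectory $x(t)$ of the flow generated by $\XX(x) = \|x\|^\alpha Ax$ in the splitting $\RR^d = (E^u\times\{0\}) \oplus (\{0\}\times E^s)$ as $x(t) = x_u(t) + x_s(t)$, the block-diagonal form of $A$ decouples the two components:
\begin{align*}
\dot x_u(t) &= \gamma\,\|x(t)\|^\alpha\, x_u(t), \\
\dot x_s(t) &= -\beta\,\|x(t)\|^\alpha\, x_s(t).
\end{align*}
Here I will first note that on $[0,T_0]$ we have $x_u(t)\neq 0$ (otherwise the trajectory lies on the local stable manifold of the fixed point and never leaves $Y$, making the statement vacuous), so that $\tan\theta(t) = \|x_s(t)\|/\|x_u(t)\|$ is a well-defined $C^1$ function of $t$.

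Next I would differentiate the norms. From the $x_u$-equation, $\tfrac{d}{dt}\|x_u\|^2 = 2\langle x_u,\dot x_u\rangle = 2\gamma\|x\|^\alpha\|x_u\|^2$, hence $\tfrac{d}{dt}\|x_u\| = \gamma\|x\|^\alpha\|x_u\|$; similarly $\tfrac{d}{dt}\|x_s\| = -\beta\|x\|^\alpha\|x_s\|$ (the latter holds wherever $x_s\neq0$, and trivially when $x_s\equiv 0$). Then the quotient rule gives
\[
(\tan\theta)' = \frac{\|x_s\|'\,\|x_u\| - \|x_s\|\,\|x_u\|'}{\|x_u\|^2} = \frac{-\beta\|x\|^\alpha\|x_s\|\,\|x_u\| - \gamma\|x\|^\alpha\|x_s\|\,\|x_u\|}{\|x_u\|^2} = -(\beta+\gamma)\|x\|^\alpha\,\frac{\|x_s\|}{\|x_u\|},
\]
which is exactly $-\lambda\|x\|^\alpha\tan\theta$ since $\lambda = \beta+\gamma$.

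There is essentially no obstacle here: the only point requiring a word of care is the case $x_s\equiv 0$ (equivalently $\theta\equiv 0$), where $\|x_s\|$ is not differentiable at the level of the formula $\sqrt{\langle x_s,x_s\rangle}$, but then both sides of the claimed identity vanish identically and the conclusion is immediate. I would also remark, since it is used repeatedly later, that because $E^u$ is one-dimensional the unstable component satisfies $x_u(t) = x_u(0)\exp\!\big(\gamma\int_0^t\|x(\tau)\|^\alpha\,d\tau\big)$, so $x_u$ never changes sign and $\theta(t)$ stays in $[0,\pi/2)$ throughout $[0,T_0]$; combined with Lemma \ref{lem:tantheta} this shows $\theta$ is non-increasing, as claimed in the outline of \S\ref{sec:katok-pf}.
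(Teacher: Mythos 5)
Your proof is correct and follows essentially the same route as the paper: decouple the flow using the block form $A = \gamma\Id_u \oplus (-\beta\Id_s)$, differentiate $\|x_u\|$ and $\|x_s\|$, and apply the quotient rule to $\tan\theta = \|x_s\|/\|x_u\|$. The extra remarks about $x_u\neq 0$ and the degenerate case $x_s\equiv 0$ are fine but not needed beyond what the paper's computation already gives.
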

\begin{proof}
Observe that $\dot x_u = \|x\|^\alpha \gamma x_u$ and $\dot x_s = -\|x\|^\alpha \beta x_s$, whence
\[
\|x_u\|' = \left\langle \frac{x_u}{\|x_u\|}, \dot x_u \right\rangle = \|x\|^\alpha \gamma \|x_u\|
= \|x\|^{1+\alpha} \gamma \cos \theta,
\]
and similarly $\|x_s\|' = -\|x\|^{1+\alpha}\beta \sin\theta$.  Thus
\begin{align*}
(\tan\theta)' &= \left(\frac{\|x_s\|}{\|x_u\|}\right)' = \frac{\|x_u\| \|x_s\|' - \|x_s\| \|x_u\|'}{\|x_u\|^2} \\
&= \frac{(\|x\|\cos\theta)(-\|x\|^{1+\alpha}\beta\sin\theta) - (\|x\|\sin\theta)(\|x\|^{1+\alpha}\gamma\cos\theta)}{\|x\|^2\cos^2\theta} \\
&= -(\beta+\gamma) \|x\|^\alpha\tan\theta.\qedhere
\end{align*}
\end{proof}

Defining a quantity $J(t_1,t_2)$ by
\begin{equation}\label{eqn:Jt1t2}
J(t_1, t_2) := \lambda \int_{t_1}^{t_2} \|x(\tau)\|^\alpha\,d\tau,
\end{equation}
it follows from Lemma~\ref{lem:tantheta} that
\begin{equation}\label{eqn:tantheta}
\tan\theta(t_2) = e^{-J(t_1,t_2)} \tan\theta(t_1).
\end{equation}

Given $s\in (0,\infty)$ and a trajectory $x\colon [0,T_0] \to Y$ that leaves $Y$ at time $T_0$, let $T_s$ be the time at which $\tan \theta = s$.  By Lemma \ref{lem:tantheta}, $\tan\theta$ is strictly decreasing, so $T_s$ is well-defined as long as 
\begin{equation}\label{eqn:s-ok}
\tan\theta(0) > s > \tan\theta(T_0).
\end{equation}

The following lemma controls how $\|x\|$ changes.

\begin{lemma}\label{lem:normx}
The norm of $x$ varies according to the ODE
\begin{equation}\label{eqn:xalph-ode}
\frac d{dt} \|x\|^{-\alpha} = \alpha(\beta \sin^2\theta - \gamma\cos^2\theta)
= \alpha \xi(\tan\theta),
\end{equation}
where $\xi(s) = \frac{\beta s^2 - \gamma}{s^2 + 1}$.
In particular, for every piece of trajectory $x\colon [0,T] \to Y$ (that need not enter/exit $Y$ at its endpoints), we have
\begin{align}
\label{eqn:xalph-gamma}
-\gamma\alpha &\leq 
\frac d{dt} \|x\|^{-\alpha} \leq \beta\alpha, \\
\label{eqn:xalph2}
\|x(t)\|^{-\alpha} &\leq \|x(T)\|^{-\alpha} + \gamma\alpha(T-t), \\
\label{eqn:xalph2b}
\|x(t)\|^{-\alpha} &\leq \|x(0)\|^{-\alpha} + \beta\alpha t.
\end{align}
Moreover, if $x\colon [0,T_0] \to Y$ is a trajectory that enters $Y$ at time 0 and leaves it at time $T_0$, then we have
\begin{alignat}{3}
\label{eqn:xalph3}
\|x(t)\|^{-\alpha} &\geq r_0^{-\alpha} + \alpha \xi (s) t \qquad
&&\text{ for all } s\in [0,T_s], \\
\label{eqn:xalph4}
\|x(t)\|^{-\alpha} &\geq r_0^{-\alpha} - \alpha \xi (s) (T_0 - t)
\qquad
&&\text{ for all }s\in [T_s,T_0].
\end{alignat}
\end{lemma}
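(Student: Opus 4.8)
The plan is to establish the master identity \eqref{eqn:xalph-ode} by a direct computation and then read off each of the remaining bounds from it. For \eqref{eqn:xalph-ode} I would write $\frac{d}{dt}\|x\| = \langle x/\|x\|,\dot x\rangle = \|x\|^{\alpha-1}\langle x,Ax\rangle$, using $\XX(x) = \|x\|^\alpha Ax$, and then invoke the block form $A = A_u\oplus A_s$ with $A_u = \gamma\Id_u$ and $A_s = -\beta\Id_s$ to get $\langle x,Ax\rangle = \gamma\|x_u\|^2 - \beta\|x_s\|^2$. Feeding this into $\frac{d}{dt}\|x\|^{-\alpha} = -\alpha\|x\|^{-\alpha-1}\frac{d}{dt}\|x\|$ and using $\|x_u\|/\|x\| = \cos\theta$, $\|x_s\|/\|x\| = \sin\theta$ gives $\frac{d}{dt}\|x\|^{-\alpha} = \alpha(\beta\sin^2\theta - \gamma\cos^2\theta)$; dividing numerator and denominator by $\cos^2\theta$ (taking the obvious limit when $\theta = \pi/2$) rewrites this as $\alpha\xi(\tan\theta)$.

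Next I would record the elementary identity $\xi(s) = \beta - \frac{\beta+\gamma}{s^2+1}$, which shows $\xi$ is strictly increasing on $[0,\infty)$ with $\xi(0) = -\gamma$ and $\xi(s)\uparrow\beta$; hence $-\gamma \le \xi(\tan\theta) < \beta$, which is \eqref{eqn:xalph-gamma}, and integrating the lower (resp.\ upper) bound over $[t,T]$ (resp.\ $[0,t]$) yields \eqref{eqn:xalph2} and \eqref{eqn:xalph2b}. For the sharper lower bounds \eqref{eqn:xalph3}--\eqref{eqn:xalph4} I would invoke Lemma \ref{lem:tantheta}: since $(\tan\theta)' = -\lambda\|x\|^\alpha\tan\theta < 0$ along a trajectory off $E^u$, the function $\tan\theta$ decreases strictly on $[0,T_0]$, so $T_s$ is the unique time with $\tan\theta(T_s) = s$, and $\tan\theta(t)\ge s$ for $t\le T_s$ while $\tan\theta(t)\le s$ for $t\ge T_s$. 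Monotonicity of $\xi$ then gives $\xi(\tan\theta(t))\ge\xi(s)$ on $[0,T_s]$ and $\xi(\tan\theta(t))\le\xi(s)$ on $[T_s,T_0]$; integrating \eqref{eqn:xalph-ode} forward from $0$ and using $\|x(0)\| = r_0$ (the trajectory enters $Y$ at time $0$) produces \eqref{eqn:xalph3}, and integrating backward from $T_0$ and using $\|x(T_0)\| = r_0$ produces \eqref{eqn:xalph4} after rearranging.

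I do not expect a genuine obstacle: the whole lemma reduces to one-variable calculus once \eqref{eqn:xalph-ode} is in hand. The only points needing a little care are getting the sign of $\xi(s)$ right when $s^2 < \gamma/\beta$, keeping the two integrations in the last step straight (forward from time $0$ versus backward from time $T_0$, with the reversed inequality), and noting that the degenerate trajectories lying entirely in $E^u$ or $E^s$ cause no trouble — all routine.
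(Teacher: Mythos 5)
Your proposal is correct and follows essentially the same route as the paper: differentiate $\|x\|^{-\alpha}$ directly using $\dot x=\|x\|^\alpha Ax$ and the block form of $A$ to get \eqref{eqn:xalph-ode}, read off \eqref{eqn:xalph-gamma}--\eqref{eqn:xalph2b} from the range of $\xi$, and then use the monotone decrease of $\tan\theta$ (Lemma \ref{lem:tantheta}) together with monotonicity of $\xi$ to integrate forward from $0$ on $[0,T_s]$ and backward from $T_0$ on $[T_s,T_0]$, with $\|x(0)\|=\|x(T_0)\|=r_0$. No gaps; your explicit bookkeeping of the two integrations matches what the paper leaves implicit.
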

\begin{proof}
For \eqref{eqn:xalph-ode}, we observe that
\[
(\|x\|^{-\alpha})' = -\alpha\|x\|^{-\alpha-1} \left\langle \frac{x}{\|x\|},\dot x\right\rangle
= -\alpha\|x\|^{-\alpha-2} \langle x, \|x\|^\alpha Ax\rangle
= -\alpha \langle \hat x, A\hat x\rangle,
\]
where we write $\hat x = x/\|x\|$.  Then the first equality in \eqref{eqn:xalph-ode} follows from the form of $A$, and the second from the identity
\[
\beta \sin^2\theta - \gamma\cos^2\theta = \frac{\beta \tan^2\theta - \gamma}{\tan^2\theta + 1} = \xi(\tan\theta).
\]
Now \eqref{eqn:xalph-gamma} follows directly from \eqref{eqn:xalph-ode}, and in turn implies \eqref{eqn:xalph2}--\eqref{eqn:xalph2b}.  For \eqref{eqn:xalph3}--\eqref{eqn:xalph4}, we first observe that for $t\in [0,T_s]$ we have $\tan\theta \geq 2$, and so $\xi(\tan\theta) \geq \xi(s)$, giving
\[
\frac d{dt} \|x\|^{-\alpha} \geq \alpha \xi(s),
\]
which proves \eqref{eqn:xalph3}.  A similar bound on $[T_s,T_0]$ gives \eqref{eqn:xalph4}.
\end{proof}

The following is our main technical result on estimates for $\|x\|$.

\begin{proposition}\label{prop:chevron}
For every $0<\kappa<1$, there are $\beta',\gamma'>0$ and $0<\kappa < 1$ such that for every trajectory that enters $Y$ at time $0$ and leaves it at time $T_0$, we have
\begin{alignat*}{3}
r_0^{-\alpha} + \alpha\beta't &\leq \|x(t)\|^{-\alpha} \leq r_0^{-\alpha} + \alpha\beta t
\quad&&\text{for all } t\in [0,\kappa T_0], \\
r_0^{-\alpha} + \alpha\gamma'(T_0-t) &\leq \|x(t)\|^{-\alpha} \leq r_0^{-\alpha} + \alpha\gamma (T_0-t)
\quad&&\text{for all } t\in [\kappa T_0,T_0].
\end{alignat*}
\end{proposition}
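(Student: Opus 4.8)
The two upper bounds need nothing beyond Lemma \ref{lem:normx} and hold for \emph{any} $\kappa\in(0,1)$: applying \eqref{eqn:xalph2b} on $[0,t]$ gives $\|x(t)\|^{-\alpha}\le\|x(0)\|^{-\alpha}+\alpha\beta t=r_0^{-\alpha}+\alpha\beta t$, and applying \eqref{eqn:xalph2} on $[t,T_0]$ gives $\|x(t)\|^{-\alpha}\le\|x(T_0)\|^{-\alpha}+\alpha\gamma(T_0-t)=r_0^{-\alpha}+\alpha\gamma(T_0-t)$. So the content is the lower bounds, and the plan is to exploit the \emph{concavity} of $u(t):=\|x(t)\|^{-\alpha}$. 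By \eqref{eqn:xalph-ode}, $u'(t)=\alpha\xi(\tan\theta(t))$ with $\xi$ strictly increasing, and $\tan\theta$ is strictly decreasing by Lemma \ref{lem:tantheta}, so $u'$ is strictly decreasing. Since $u(0)=u(T_0)=r_0^{-\alpha}$, the function $u$ rises to a maximum $u_*:=u(T_*)$ at the closest-approach time $T_*$ (where $\tan\theta(T_*)=\sqrt{\gamma/\beta}$, i.e.\ $u'(T_*)=0$) and then falls, and in particular $u\ge r_0^{-\alpha}$ throughout $[0,T_0]$.

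Concavity and $u(0)=u(T_0)=r_0^{-\alpha}$ give the chord bounds
\begin{equation*}
u(t)\ge r_0^{-\alpha}+\tfrac{u_*-r_0^{-\alpha}}{T_*}t \ \ (0\le t\le T_*),\qquad u(t)\ge r_0^{-\alpha}+\tfrac{u_*-r_0^{-\alpha}}{T_0-T_*}(T_0-t)\ \ (T_*\le t\le T_0),
\end{equation*}
so everything reduces to one quantitative claim: there are $c>0$ and $c_1\in(0,\tfrac12]$ such that $u_*-r_0^{-\alpha}\ge c\max(T_*,T_0-T_*)$ and $c_1\le T_*/T_0\le 1-c_1$. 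Granting this, set $\kappa:=c_1$, $\beta':=c/\alpha$, and $\gamma':=\alpha^{-1}\min(c,\tfrac{cc_1}{1-c_1})$. On $[0,\kappa T_0]\subset[0,T_*]$ the first chord bound gives $u(t)\ge r_0^{-\alpha}+ct=r_0^{-\alpha}+\alpha\beta't$; on $[(1-c_1)T_0,T_0]\subset[T_*,T_0]$ the second gives $u(t)\ge r_0^{-\alpha}+c(T_0-t)$; and on the middle band $[\kappa T_0,(1-c_1)T_0]$, unimodality of $u$ together with $u(\kappa T_0),u((1-c_1)T_0)\ge r_0^{-\alpha}+cc_1T_0$ gives $u(t)\ge r_0^{-\alpha}+\tfrac{cc_1}{1-c_1}(T_0-t)$. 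This covers $[0,\kappa T_0]$ and $[\kappa T_0,T_0]$ with slopes $\alpha\beta'$ and $\alpha\gamma'$, as required. (The second lower bound is also just the first read backwards in time, since $t\mapsto x(T_0-t)$ solves the same equation with $\beta,\gamma$ and $E^u,E^s$ interchanged.)

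To prove the quantitative claim I would change variables to $\ell=\log\tan\theta$, in which Lemma \ref{lem:tantheta} reads $d\ell/dt=-\lambda\|x\|^\alpha=-\lambda/u$; then every ``angle time'' $T_{s'}$ equals $\tfrac1\lambda\int u\,(-d\ell)$ over the appropriate $\ell$-range, and $u_*-r_0^{-\alpha}=\alpha\int\xi\,dt=\tfrac\alpha\lambda\int\xi(e^\ell)u\,(-d\ell)$ over $\ell\in[\log\sqrt{\gamma/\beta},\log\tan\theta(0)]$. Substituting the a priori bounds $r_0^{-\alpha}\le u$ and $u\le r_0^{-\alpha}+\alpha\min(\beta t,\gamma(T_0-t))$ into these integrals pins $u_*$, $T_*$ and $T_0-T_*$ to each other; the mechanism is that a deep passage spends a time $\asymp u_*$ in the slow zone $\|x\|\lesssim\|x(T_*)\|$, essentially symmetrically split about $T_*$ (since $u$ is a near-parabola in $t$ near its maximum), which forces $T_*/T_0$ toward $\tfrac12$ and keeps $(u_*-r_0^{-\alpha})/T_*$ bounded below, while a shallow passage has $T_0$ bounded above and is handled directly.

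The main obstacle is exactly this comparison of $u_*,T_*,T_0-T_*,T_0$: neither $T_*/T_0$ nor $(u_*-r_0^{-\alpha})/T_*$ is a priori comparable to a constant, and it is here that the special form $\XX(x)=\|x\|^\alpha Ax$ (equivalently the identities in Lemmas \ref{lem:tantheta} and \ref{lem:normx}) is genuinely used. It also forces the argument to restrict to passages that reach a definite depth, say $\|x\|\le r_0/2$: for a trajectory that merely grazes $Y$ one has $u_*-r_0^{-\alpha}\asymp(\tan\theta(0)-\sqrt{\gamma/\beta})^2$ but $T_*\asymp\tan\theta(0)-\sqrt{\gamma/\beta}$, so $(u_*-r_0^{-\alpha})/T_*\to 0$ and no fixed $\beta'>0$ survives --- such short passages being absorbed instead into the uniform hyperbolicity on $Z\setminus Y$ noted at the start of the section.
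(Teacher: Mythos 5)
Your reduction is fine as far as it goes: the upper bounds from Lemma \ref{lem:normx}, the concavity of $u(t)=\|x(t)\|^{-\alpha}$, and the chord bounds correctly reduce everything to your ``quantitative claim'' that $u_*-r_0^{-\alpha}\geq c\max(T_*,T_0-T_*)$ with $T_*/T_0\in[c_1,1-c_1]$ uniformly over trajectories. But that claim \emph{is} the proposition --- it is where all the content lies --- and you do not prove it: you only sketch a change of variables $\ell=\log\tan\theta$ and describe the mechanism you expect, and then in your final paragraph you concede that the claim fails for grazing passages and propose to restrict to trajectories reaching a definite depth, which is a modification of the statement rather than a proof of it. So the heart of the argument is missing. (A smaller point: as the statement is used, $\kappa$ is given and $\beta',\gamma'$ must be produced for it; you instead choose $\kappa:=c_1$ yourself. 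This is harmless, since a bound for one value of $\kappa$ yields bounds for every other value with adjusted constants, but it should be said.)

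For comparison, the paper proves the missing estimate without ever controlling $T_*$ or $u_*$ directly. By concavity it suffices to verify the lower bounds at the single time $t=\kappa T_0$. One fixes thresholds $s_1<\sqrt{\gamma/\beta}<s_2$ with crossing times $T_2<T_1$ (where $\tan\theta(T_j)=s_j$); integrating $\frac{d}{dt}\log\tan\theta=-(\beta+\gamma)\|x\|^\alpha$ against the upper bounds of Lemma \ref{lem:normx} shows the crossing band is short, $T_1-T_2\leq\eps T_0$, once $s_1,s_2$ are close to $\sqrt{\gamma/\beta}$; then a three-case analysis according to the position of $\kappa T_0$ relative to $T_2$ and $T_1$, using \eqref{eqn:xalph3}--\eqref{eqn:xalph4} and one more application of concavity in the middle case, produces explicit slopes $\beta',\gamma'$ in terms of $\xi(s_1),\xi(s_2)$. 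Your observation about grazing trajectories is a fair remark about the literal scope of the statement (the argument tacitly uses that the trajectory crosses both thresholds, i.e.\ enters close enough to $E^s$ and leaves close enough to $E^u$), but noting the difficulty does not discharge it: to complete your proof you would still have to establish, in the regime of deep passages, essentially the band estimate and slope bounds just described --- at which point you have reproduced the paper's argument rather than found an alternative to it.
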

\begin{proof}
The upper bounds are immediate from Lemma \ref{lem:normx}.  For the lower bounds, we observe that by Lemma \ref{lem:tantheta} and \eqref{eqn:xalph-ode}, $\|x\|^{-\alpha}$ has negative second derivative with respect to  $t$; since the desired lower bounds are linear and hold at $0,T_0$, it suffices to prove them at $t=\kappa T_0$.

Given $0<s_1 < \sqrt{\gamma/\beta} < s_2$, let $\xi_j = \xi(s_j)$ so that $\xi_1 < 0 < \xi_2$, and given a trajectory as in the hypothesis, let $T_j$ be such that $\tan\theta(T_j)=s_j$.  From Lemma \ref{lem:tantheta} we have $\frac d{dt} (\log \tan\theta) = -(\beta+\gamma) \|x\|^\alpha$, and from \eqref{eqn:xalph2}--\eqref{eqn:xalph2b} we have $\|x\|^{-\alpha} \leq r_0^{-\alpha} + \alpha(\beta+\gamma)T_0$, so
\[
\frac d{dt} (\log\tan\theta) \leq \frac{-(\beta+\gamma)}{r_0^{-\alpha} + \alpha(\beta+\gamma)T_0}.
\]
We conclude that
\[
T_1 - T_2 \leq \log\big(\frac {s_2}{s_1}\big) \bigg(\frac{r_0^{-\alpha}}{\beta+\gamma} + \alpha T_0\bigg)
\]
for every such trajectory.  In particular, given $\kappa \in (0,1)$, we can choose $\eps>0$ such that $(\kappa - \eps, \kappa + \eps) \subset (0,1)$ and $\kappa-\eps > \kappa/2$; then we  choose $s_1,s_2$ as above close enough to $\sqrt{\gamma/\beta}$ that for every trajectory as in the hypothesis of the proposition, we have
\begin{equation}\label{eqn:T1T2}
T_1 - T_2 \leq \eps T_0.
\end{equation}
Now we consider the following three cases: either $T_2 \geq \kappa T_0$, or $T_1 \leq \kappa T_0$, or $T_2 < \kappa T_0 < T_1$.  

In the first case, we have $\kappa T_0 \in [0, T_{s_2}]$, and \eqref{eqn:xalph3} gives
\begin{equation}\label{eqn:one}
\|x(\kappa T_0)\|^{-\alpha} \geq r_0^{-\alpha} + \alpha \xi_2 \kappa T_0
= r_0^{-\alpha} + \alpha \xi_2 \frac{\kappa}{1-\kappa}(T_0 - \kappa T_0).
\end{equation}
In the second case, we have $\kappa T_0 \in [T_{s_1},0]$, and \eqref{eqn:xalph4} gives
\begin{equation}\label{eqn:two}
\|x(\kappa T_0)\|^{-\alpha} \geq r_0^{-\alpha} - \alpha \xi_1(T_0 - \kappa T_0)
= r_0^{-\alpha} -\alpha \xi_1\frac {1-\kappa}{\kappa} \kappa T_0.
\end{equation}
In the third case, 
 \eqref{eqn:T1T2} yields $T_2 \geq (\kappa - \eps)T_0$, and
we have $(\kappa-\eps)T_0 \in [0,T_{s_2}]$.  Thus \eqref{eqn:xalph3} gives
\[
\|x((\kappa-\eps) T_0)\|^{-\alpha} \geq r_0^{-\alpha} + \alpha\xi_2(\kappa-\eps)T_0.
\]
We can write $\kappa T_0$ as a convex combination of $(\kappa - \eps)T_0$ and $T_0$ by
\[
\kappa T_0 = \frac{1-\kappa}{1-\kappa + \eps}(\kappa-\eps)T_0 + \frac{\eps}{1-\kappa + \eps}T_0.
\]
Let $\xi_2' = \frac 12 \frac{1-\kappa}{1-\kappa+\eps} \xi_2$; 
then by concavity of $t\mapsto \|x(t)\|^{-\alpha}$ together with the fact that $\|x(T_0)\| = r_0$ and $\kappa - \eps \geq \frac 12 \kappa$, we get
\begin{equation}\label{eqn:three}
\|x(\kappa T_0)\|^{-\alpha} \geq r_0^{-\alpha} + \alpha \xi_2' \kappa T_0
= r_0^{-\alpha} + \alpha \xi_2' \frac{\kappa}{1-\kappa}(T_0 - \kappa T_0).
\end{equation}
Combining \eqref{eqn:one}--\eqref{eqn:three} and writing $\beta' = \min(\xi_2, |\xi_1| \frac{1-\kappa}{\kappa}, \xi_2')$, we conclude that $\|x(t)\|^{-\alpha} \geq r_0^{-\alpha} + \alpha \beta' t$ for $t = \kappa T_0$, and hence by concavity it holds for all $t\in [0,\kappa T_0]$.  The lower bound for $t\in [\kappa T_0, T_0]$ follows similarly by taking $\gamma' = \min(\xi_2 \frac{\kappa}{1-\kappa}, |\xi_1|, \xi_2' \frac{\kappa}{1-\kappa})$.
\end{proof}

We need a few more bounds regarding $T_s$.

\begin{lemma}\label{lem:int-tan-theta}
For every $s\in (0,\infty)$ and every trajectory satisfying \eqref{eqn:s-ok}, we have
\begin{equation}\label{eqn:int-tan-theta}
\int_{T_s}^{T_0} \|x\|^\alpha \tan\theta\,dt \leq \frac s{\gamma + \beta}.
\end{equation}
\end{lemma}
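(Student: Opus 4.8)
The plan is to observe that the integrand is, up to a constant, a perfect derivative, so the integral telescopes. Concretely, Lemma~\ref{lem:tantheta} gives $(\tan\theta)'=-\lambda\|x\|^\alpha\tan\theta$ with $\lambda=\beta+\gamma$, so that
\[
\|x(t)\|^\alpha\tan\theta(t)=-\tfrac1\lambda\,\frac{d}{dt}\tan\theta(t)
\]
on all of $[0,T_0]$. Substituting this into the integral and applying the fundamental theorem of calculus, I would get
\[
\int_{T_s}^{T_0}\|x\|^\alpha\tan\theta\,dt=-\tfrac1\lambda\bigl(\tan\theta(T_0)-\tan\theta(T_s)\bigr)=\tfrac1\lambda\bigl(\tan\theta(T_s)-\tan\theta(T_0)\bigr).
\]

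Next I would use the definition of $T_s$, namely $\tan\theta(T_s)=s$, which is legitimate precisely because condition~\eqref{eqn:s-ok} places $s$ strictly between $\tan\theta(T_0)$ and $\tan\theta(0)$ and Lemma~\ref{lem:tantheta} shows $\tan\theta$ is strictly decreasing on $[0,T_0]$, so $T_s\in(0,T_0)$ is well-defined and $T_s\le T_0$. Since $\theta(T_0)\in[0,\pi/2)$ we have $\tan\theta(T_0)\ge0$ (and in fact $\tan\theta(T_0)<s$), hence
\[
\int_{T_s}^{T_0}\|x\|^\alpha\tan\theta\,dt=\frac{s-\tan\theta(T_0)}{\lambda}\le\frac{s}{\lambda}=\frac{s}{\gamma+\beta},
\]
which is the claimed bound.

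Honestly, there is no real obstacle here: the estimate is an immediate consequence of Lemma~\ref{lem:tantheta} together with the defining property of $T_s$, and the only point requiring a word of justification is the well-definedness of $T_s$ and the sign of $\tan\theta(T_0)$, both of which follow directly from monotonicity of $\tan\theta$ and from~\eqref{eqn:s-ok}. I would therefore present the argument in three short lines exactly as above.
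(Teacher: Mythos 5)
Your proof is correct and is essentially the paper's argument: the paper writes $\tan\theta(t)=se^{-J(T_s,t)}$ via \eqref{eqn:tantheta} and integrates $\|x\|^\alpha se^{-J(T_s,t)}=-\tfrac{s}{\lambda}\tfrac{d}{dt}e^{-J(T_s,t)}$, which is the same perfect-derivative observation you make directly from Lemma \ref{lem:tantheta}.
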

\begin{proof}
For \eqref{eqn:int-tan-theta} we observe that \eqref{eqn:Jt1t2} gives
\[
\frac d{dt} (e^{-J(T_s,t)}) = -\lambda \|x\|^\alpha e^{-J(T_s,t)},
\]
and so by \eqref{eqn:tantheta} we have
\[
\int_{T_s}^{T_0} \|x\|^\alpha \tan\theta\,dt =  \int_{T_s}^{T_0} \|x\|^\alpha s e^{-J(T_s,t)}\,dt 
= -\frac{s}{\lambda} \left[ e^{-J(T_s,t)} \right]_{T_s}^{T_0},
\]
which proves \eqref{eqn:int-tan-theta}.  
\end{proof}

Now we obtain an upper bound on $T_s$ when $s$ is large, and a lower bound on $T_s$ when $s$ is small.  Here `large' and `small' are related to the sign of $\xi(s)$, which is positive when $s^2>\gamma/\beta$ and negative when $s^2 < \gamma/\beta$.

\begin{lemma}\label{lem:bound-Ts}
Given $s\in (0,\infty)$ such that $s^2 > \gamma/\beta$, consider the quantity
\begin{equation}\label{eqn:chi-s}
\chi=\chi(s) = \frac \gamma{\gamma+\beta}\left(1 + \frac 1{s^2}\right) < 1;
\end{equation}
if the trajectory $x$ enters $Y$ at time $0$ and leaves it at time $T_0$, with $\tan\theta(T_0) < s < \tan\theta(0)$, then we have
\begin{equation}\label{eqn:Ts-small}
T_s \leq \chi T_0.
\end{equation}
If $s^2 < \gamma/\beta$, then consider
\begin{equation}\label{eqn:chi'}
\chi' = \chi'(s) = \frac{\gamma - \beta s^2}{\gamma + \beta} > 0;
\end{equation}
under the same assumptions on $x$, we have
\begin{equation}\label{eqn:Ts-large}
T_s \geq \chi' T_0.
\end{equation}
\end{lemma}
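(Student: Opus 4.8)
The plan is to combine, at the single time $t=T_s$, two linear estimates for $u(t):=\|x(t)\|^{-\alpha}$ that are already recorded in Lemma~\ref{lem:normx}, using that $T_s\in(0,T_0)$ is well-defined and unique because $\tan\theta$ is strictly decreasing (Lemma~\ref{lem:tantheta}), and that $u(0)=u(T_0)=r_0^{-\alpha}$. Recall also that $\xi(s)=\tfrac{\beta s^2-\gamma}{s^2+1}$ is increasing in $s$, so $\tan\theta(t)\ge s$ on $[0,T_s]$ forces $\xi(\tan\theta(t))\ge\xi(s)$ there, and dually on $[T_s,T_0]$.

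For \eqref{eqn:Ts-small}, assume $s^2>\gamma/\beta$, so $\xi(s)>0$. Inequality \eqref{eqn:xalph3} at $t=T_s$ gives $u(T_s)\ge r_0^{-\alpha}+\alpha\xi(s)T_s$, while \eqref{eqn:xalph2} with $t=T_s$, $T=T_0$ gives $u(T_s)\le r_0^{-\alpha}+\gamma\alpha(T_0-T_s)$. Chaining these, cancelling $r_0^{-\alpha}$ and the factor $\alpha$, yields $(\xi(s)+\gamma)T_s\le\gamma T_0$. Since $\xi(s)+\gamma=\tfrac{(\beta+\gamma)s^2}{s^2+1}>0$, dividing gives $T_s\le\tfrac{\gamma}{\xi(s)+\gamma}T_0$, and a direct simplification identifies the coefficient with $\chi(s)=\tfrac{\gamma}{\gamma+\beta}\bigl(1+\tfrac1{s^2}\bigr)$; the inequality $\chi(s)<1$ is just a rearrangement of $\beta s^2>\gamma$.

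For \eqref{eqn:Ts-large}, assume $s^2<\gamma/\beta$, so $\xi(s)<0$. By the mirror-image argument, \eqref{eqn:xalph4} at $t=T_s$ gives $u(T_s)\ge r_0^{-\alpha}-\alpha\xi(s)(T_0-T_s)$, while \eqref{eqn:xalph2b} at $t=T_s$ gives $u(T_s)\le r_0^{-\alpha}+\beta\alpha T_s$. Chaining and cancelling gives $-\xi(s)\,T_0\le(\beta-\xi(s))T_s$; since $\beta-\xi(s)=\tfrac{\beta+\gamma}{s^2+1}>0$, we obtain $T_s\ge\tfrac{-\xi(s)}{\beta-\xi(s)}T_0$, and simplification identifies the coefficient with $\chi'(s)=\tfrac{\gamma-\beta s^2}{\gamma+\beta}$, which is positive precisely because $s^2<\gamma/\beta$.

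There is no real obstacle here: the argument is two symmetric applications of ``a sharp bound for $u$ on one side of $T_s$ meets a crude bound on the other side,'' followed by elementary algebra with $\xi(s)=\tfrac{\beta s^2-\gamma}{s^2+1}$. The only points that need a moment's care are that the denominators $\xi(s)+\gamma$ and $\beta-\xi(s)$ are strictly positive for every $s>0$ (they equal $\tfrac{(\beta+\gamma)s^2}{s^2+1}$ and $\tfrac{\beta+\gamma}{s^2+1}$), so the divisions preserve the direction of the inequalities, and that $T_s$ is well-defined under the stated hypothesis $\tan\theta(T_0)<s<\tan\theta(0)$, which is immediate from the strict monotonicity in Lemma~\ref{lem:tantheta}.
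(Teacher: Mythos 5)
Your proof is correct and follows essentially the same route as the paper: both halves combine the sharp one-sided bound \eqref{eqn:xalph3} (resp.\ \eqref{eqn:xalph4}) with the crude bound \eqref{eqn:xalph2} (resp.\ \eqref{eqn:xalph2b}) at $t=T_s$, cancel $r_0^{-\alpha}$ and $\alpha$, and simplify using $\xi(s)+\gamma=\frac{(\beta+\gamma)s^2}{s^2+1}$ and $\beta-\xi(s)=\frac{\beta+\gamma}{s^2+1}$. No gaps.
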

\begin{proof}
To get \eqref{eqn:Ts-small} we combine \eqref{eqn:xalph2} and \eqref{eqn:xalph3} from Lemma \ref{lem:normx}, yielding
\[
r_0^{-\alpha} + \xi \alpha T_s \leq r_0^{-\alpha} + \gamma\alpha(T_0 - T_s),
\]
where we write $\xi = \xi(s)$ and use the fact that $\|x(T_0)\| = r_0$.  This gives
\begin{equation}\label{eqn:TsT0}
(\xi + \gamma) T_s \leq \gamma T_0,
\end{equation}
and so observing that
\[
\xi + \gamma = \frac{\beta s^2 - \gamma}{s^2 + 1} + \gamma = (\beta+\gamma) \frac{s^2}{s^2 + 1},
\]
we get
\[
T_s \leq \frac \gamma{\xi + \gamma} T_0 \leq \frac{\gamma}{\beta + \gamma} \frac{s^2+1}{s^2} T_0,
\]
which proves \eqref{eqn:Ts-small}.  The claim regarding $\chi'$ is proved analogously: combining the estimates from \eqref{eqn:xalph2b} and \eqref{eqn:xalph4} gives
\[
-\xi(T_0 - T_s) \leq \beta T_s,
\]
and so
\[
T_s \geq \frac{-\xi}{\beta -\xi} T_0
= \bigg(\frac{\gamma - \beta s^2}{s^2+1} \slash \Big(\beta + \frac{\gamma - \beta s^2}{s^2+1} \Big)\bigg) T_0
= \frac{\gamma - \beta s^2}{\beta + \gamma} T_0.\qedhere
\]
\end{proof}

\subsection{Tangent vectors near the fixed point}

Now let $v(t)$ be a family of tangent vectors that is invariant under the flow -- that is, $D\ph_\tau(x(t))(v(t)) = v(t+\tau)$.   The following standard result governs how $v$ evolves in time; a proof (using different notation) is given in \cite[\S 15.2]{HS74}.

\begin{proposition}\label{prop:generalflows}
Let $\ph_t$ be the flow for a vector field $\XX$ on $\RR^n$.  Let $x(t)$ be a solution to $\dot x = \XX(x)$, and let $v(t)\subset T_{x(t)}M$ be a $D\ph$-invariant family of tangent vectors.  Then
\begin{equation}\label{eqn:vdot}
\dot v(t) = (\LLL_{v(t)} \XX)(x(t)),
\end{equation}
where $\LLL_v$ is the Lie derivative in the direction of $v$ -- that is, $\LLL_v \XX(x) = (D\XX(x))v$.
\end{proposition}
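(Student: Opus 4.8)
The plan is to reduce the statement to the classical theorem on differentiable dependence of solutions of ODEs on their initial conditions, together with the cocycle property of the flow. First I would note that the invariance hypothesis $D\ph_\tau(x(t))(v(t)) = v(t+\tau)$, evaluated at $t=0$, gives $v(\tau) = D\ph_\tau(x(0))\,v(0)$ for every $\tau$; equivalently, writing $x_0 = x(0)$ and $\Phi(t) := D\ph_t(x_0)$ for the spatial Jacobian of the time-$t$ flow map at $x_0$, the invariance hypothesis is exactly the identity $v(t) = \Phi(t)\,v(0)$ for all $t$. Since $v(0)$ is a fixed vector (independent of $t$), it therefore suffices to prove that $\dot\Phi(t) = D\XX(\ph_t(x_0))\,\Phi(t)$; once this is known, the chain rule gives
\[
\dot v(t) = \dot\Phi(t)\,v(0) = D\XX(\ph_t(x_0))\,\Phi(t)\,v(0) = D\XX(x(t))\,v(t) = (\LLL_{v(t)}\XX)(x(t)),
\]
which is the asserted formula, using the definition $\LLL_v\XX(x) = D\XX(x)v$ from the statement.

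The identity $\dot\Phi(t) = D\XX(\ph_t(x_0))\,\Phi(t)$, $\Phi(0)=\Id$, is precisely the \emph{variational equation} for the flow, which I would simply invoke as the standard fact that for a $C^1$ vector field $\XX$ the map $y\mapsto\ph_t(y)$ is $C^1$, its Jacobian $D\ph_t(y)$ is differentiable in $t$, and satisfies $\frac{d}{dt}D\ph_t(y) = D\XX(\ph_t(y))\,D\ph_t(y)$; this is proved in, e.g., \cite[\S15.2]{HS74}. In the setting of this paper the relevant vector field $\XX(x) = \psi(x)Ax$ is $C^{1+\alpha}$, so this regularity assumption is amply satisfied. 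Applying the variational equation at $y = x_0$ gives exactly the needed formula for $\dot\Phi$, completing the proof.

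If one prefers a self-contained derivation rather than citing the variational equation, one differentiates the integral form $\ph_t(y) = y + \int_0^t \XX(\ph_s(y))\,ds$ with respect to $y$, obtaining $D\ph_t(y) = \Id + \int_0^t D\XX(\ph_s(y))\,D\ph_s(y)\,ds$, and then differentiates in $t$; the interchange of $\partial_y$ with the integral is justified by uniform continuity of $D\XX$ on the compact tube traced out by the trajectory together with a Gr\"onwall bound on $\|D\ph_s(y)\|$. The only genuine subtlety here — and the step I expect to be the main obstacle to a fully rigorous write-up — is exactly this justification that $t\mapsto D\ph_t(y)$ is differentiable with the claimed derivative, i.e.\ the legitimacy of interchanging the spatial and temporal derivatives of the flow map; everything else is bookkeeping and the cocycle identity. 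Since this is a textbook fact for $C^1$ flows, the cleanest presentation is to quote it.
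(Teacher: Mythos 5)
Your proposal is correct and matches what the paper does: the paper gives no separate argument for Proposition \ref{prop:generalflows} but simply cites \cite[\S 15.2]{HS74}, i.e.\ the variational equation $\frac{d}{dt}D\ph_t(y)=D\XX(\ph_t(y))D\ph_t(y)$, which combined with the invariance identity $v(t)=D\ph_t(x(0))v(0)$ is exactly your derivation. Nothing further is needed.
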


For the vector field $\XX(x) = \|x\|^\alpha Ax$, we have
\begin{equation}\label{eqn:LvX}
\begin{aligned}
\LLL_v \XX(x) &= \langle v, \grad \|x\|^\alpha \rangle Ax + \|x\|^\alpha Av \\
&= \alpha \|x\|^{\alpha-2} \langle v,x \rangle Ax + \|x\|^\alpha Av \\
&= \|x\|^\alpha \big( \alpha \langle v,\hat x\rangle A\hat x + Av \big),
\end{aligned}
\end{equation}
where $\hat x = x/\|x\|$.
Write $v=v_u + v_s$ and let $\rho(t)$ be the positive angle between $v(t)$ and $E^u$, so that $\tan\rho(t) = \|v_s(t)\|/\|v_u(t)\|$. 

\begin{lemma}\label{lem:tanrho}
For $t\in[0,T_0]$, if $\tan\rho\leq 1$, then
\begin{equation}\label{eqn:tanrho'}
(\tan\rho)' \leq -\lambda\|x\|^\alpha\tan\rho + \alpha\lambda\|x\|^\alpha \frac{\tan\theta}{1+\tan^2\theta}.
\end{equation}
\end{lemma}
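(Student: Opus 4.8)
The plan is to apply Proposition \ref{prop:generalflows} together with the explicit formula \eqref{eqn:LvX} for $\LLL_v\XX$, decompose into the $E^u$ and $E^s$ components, derive one-sided differential inequalities for $\|v_u\|$ and $\|v_s\|$, and combine them via the quotient rule applied to $\tan\rho = \|v_s\|/\|v_u\|$. Write $\hat x = x/\|x\| = \hat x_u + \hat x_s$ with $\hat x_u = x_u/\|x\|$, so $\|\hat x_u\| = \cos\theta$ and $\|\hat x_s\| = \sin\theta$, and note $A\hat x = \gamma\hat x_u - \beta\hat x_s$, $Av = \gamma v_u - \beta v_s$. Projecting \eqref{eqn:vdot}--\eqref{eqn:LvX} onto $E^u$ and $E^s$ gives
\[
\dot v_u = \|x\|^\alpha\gamma\big(\alpha\langle v,\hat x\rangle\,\hat x_u + v_u\big),
\qquad
\dot v_s = -\|x\|^\alpha\beta\big(\alpha\langle v,\hat x\rangle\,\hat x_s + v_s\big),
\]
where $\langle v,\hat x\rangle = \langle v_u,\hat x_u\rangle + \langle v_s,\hat x_s\rangle$.

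The next step is to differentiate the norms. Since the hypothesis $\tan\rho\le 1$ forces $v_u\neq 0$, we may compute $\|v_u\|' = \langle v_u/\|v_u\|,\dot v_u\rangle$ and $\|v_s\|' = \langle v_s/\|v_s\|,\dot v_s\rangle$. Here the key structural input is that $\dim E^u = 1$, so $v_u$ and $\hat x_u$ are parallel and $|\langle v_u,\hat x_u\rangle| = \|v_u\|\cos\theta$, $|\langle v_u/\|v_u\|,\hat x_u\rangle| = \cos\theta$; in $E^s$ we only have Cauchy--Schwarz, $|\langle v_s,\hat x_s\rangle|\le\|v_s\|\sin\theta$. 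Bounding the cross-terms (keeping a \emph{lower} bound for $\|v_u\|'$ and an \emph{upper} bound for $\|v_s\|'$, and discarding the favorable term $\alpha\langle v_s/\|v_s\|,\hat x_s\rangle^2\|v_s\|$ in the latter) yields
\[
\|v_u\|' \ge \|x\|^\alpha\gamma\big(\|v_u\|(1+\alpha\cos^2\theta) - \alpha\cos\theta\sin\theta\,\|v_s\|\big),
\qquad
\|v_s\|' \le -\|x\|^\alpha\beta\big(\|v_s\| - \alpha\cos\theta\sin\theta\,\|v_u\|\big).
\]

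Combining these through $(\tan\rho)' = \|v_s\|'/\|v_u\| - \tan\rho\,\|v_u\|'/\|v_u\|$ (and being careful that multiplying the lower bound for $\|v_u\|'$ by $-\tan\rho\le 0$ flips the inequality) gives, after collecting the $\tan\rho$ term and using $\beta+\gamma=\lambda$,
\[
(\tan\rho)' \le -\lambda\|x\|^\alpha\tan\rho + \alpha\|x\|^\alpha\big(\beta\cos\theta\sin\theta - \gamma\cos^2\theta\tan\rho + \gamma\cos\theta\sin\theta\tan^2\rho\big).
\]
Finally, since $\cos\theta\sin\theta = \tan\theta/(1+\tan^2\theta)$, the claimed bound reduces to the inequality $\beta\cos\theta\sin\theta - \gamma\cos^2\theta\tan\rho + \gamma\cos\theta\sin\theta\tan^2\rho \le (\beta+\gamma)\cos\theta\sin\theta$, i.e. (after cancelling $\beta\cos\theta\sin\theta$ and dividing by $\gamma\cos\theta>0$) to $\sin\theta(\tan^2\rho - 1) \le \cos\theta\tan\rho$; this is immediate because $\tan\rho\le 1$ makes the left-hand side nonpositive while the right-hand side is nonnegative.

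I expect the main obstacle to be the sign bookkeeping in the middle step: the cross-term $\langle v_s,\hat x_s\rangle$ has no definite sign and $v_u$ may point opposite to $x_u$, so one must be scrupulous about which one-sided bound is needed for each norm so that they recombine with the correct sense in the quotient. The remaining computations (the projection formulas, the norm derivatives, the final trigonometric reduction) are routine.
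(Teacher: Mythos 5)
Your proposal is correct and follows essentially the same route as the paper: differentiate $\|v_u\|$ and $\|v_s\|$ using Proposition \ref{prop:generalflows} and \eqref{eqn:LvX}, exploit that $E^u$ is one-dimensional, combine via the quotient rule for $\tan\rho=\|v_s\|/\|v_u\|$, and finish with the hypothesis $\tan\rho\le 1$. The only difference is bookkeeping — you apply the Cauchy--Schwarz/sign estimates to the cross terms before assembling the quotient, whereas the paper keeps an exact identity (with $q_s=\cos\phi_s$) until \eqref{eqn:tanrho'2} and bounds the numerator at the end — and your handling of the possible sign of $\langle \hat v_u,\hat x_u\rangle$ is, if anything, slightly more careful.
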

\begin{proof}
It suffices to consider the case when $\|v\|=1$.  Observe that
\begin{align*}
\|v_u\|' &= \left\langle \frac{v_u}{\|v_u\|}, \dot v_u \right\rangle 
= \|v_u\|^{-1} \langle v_u, \alpha\|x\|^{\alpha-2}\langle v,x\rangle Ax_u + \|x\|^\alpha Av_u \rangle \\
&= \gamma \|x\|^\alpha \cos \rho + \gamma\alpha\|x\|^{\alpha} \langle v,\hat x\rangle \cos\theta,
\end{align*}
since $E^u$ is one-dimensional.  Writing $\phi_s$ for the angle between $v_s$ and $x_s$, a similar computation gives
\[
\|v_s\|' = -\beta\|x\|^\alpha \sin\rho - \beta\alpha\|x\|^{\alpha} \langle v,\hat x\rangle \cos \phi_s \sin \theta.
\]
Let $q_s = \cos \phi_s\in [-1,1]$.  Now
\begin{equation}\label{eqn:tanrho'2}
\begin{aligned}
(&\tan\rho)' = \left( \frac{\|v_s\|}{\|v_u\|}\right)' = \frac 1{\cos^2\rho} ( \cos\rho \|v_s\|' - \sin\rho \|v_u\|') \\
&= -(\beta+\gamma) \|x\|^\alpha\tan\rho - \frac{\cos\theta}{\cos\rho}\alpha\|x\|^{\alpha}\langle v,\hat x\rangle
(\beta q_s\tan\theta + \gamma \tan\rho) \\
&= -\lambda\|x\|^\alpha\tan\rho - 
\frac{\alpha\|x\|^\alpha (1 + q_s\tan\rho\tan\theta)(\beta q_s\tan\theta+\gamma \tan\rho)}
{1+\tan^2\theta}.
\end{aligned}
\end{equation}
The numerator in the final term is equal to
\[
\alpha\|x\|^\alpha \left(\tan \rho( \gamma + q_s^2 \beta\tan^2\theta) + q_s(\gamma\tan^2\rho+\beta)\tan\theta\right),
\]
and since $\tan\theta>0$, $\tan\rho\in[0,1]$, and $q_s \geq -1$, this is bounded below by $-\alpha\lambda\|x\|^\alpha\tan\theta$.  The result follows.
\end{proof}

Lemma \ref{lem:tanrho} establishes the existence of an invariant cone family $K^u$ by observing that since $r/(1+r^2)\leq 1/2$ for all $r\in \RR$, the cone defined by $\tan\rho\leq \alpha/2$ is invariant.
This proves the half of \ref{C1} involving $K^u(x)$.  For the condition on $K^s(x)$ in \ref{C1}, it suffices to observe that \eqref{eqn:tanrho'2} can be converted into an analogous expression for $(\cot\rho)'$, and then similar estimates to those above give
\begin{equation}\label{eqn:cotrho'}
(\cot\rho)' \geq \lambda\|x\|^\alpha \cot\rho - \alpha\lambda \|x\|^\alpha \frac{\tan\theta}{1+\tan^2\theta}
\end{equation}
whenever $\cot\rho \leq 1$, so that in particular the cone defined by $\cot\rho \leq \alpha/2$ is backwards-invariant.

In what follows, we will carry out all of our estimates for $v\in K^u(x)$, so that in particular 
\begin{equation}\label{eqn:crude-rho}
\tan\rho \leq \frac \alpha 2 < \frac 12.
\end{equation}
The following estimates all have analogues for $v\in K^s(x)$, and we will mention these when necessary.

To establish the expansion properties in \ref{C2} and \ref{C3}, we will need more careful estimates of $\tan\rho$ than \eqref{eqn:crude-rho} gives.    As in the previous section, let $T_1\in [0,T_0]$ be such that $\tan\theta(x(T_1))=1$; our estimate will show that $\rho(t)$ is smaller the further away $t$ is from $0$ and $T_1$.

\begin{lemma}\label{lem:tanrho2}
With $x(t)$ and $v(t)$ as above, we have the following bounds:
\[
\tan\rho(t) \leq \begin{cases}
(\tan\rho_0) e^{-J(0,t)} + \frac \alpha2 e^{-J(t,T_1)} & t\in [0,T_1], \\
(\tan\rho(T_1) + \alpha J(T_1,t)) e^{-J(T_1,t)} & t\in [T_1,T_0].
\end{cases}
\]
\end{lemma}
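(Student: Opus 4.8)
The plan is to read Lemma~\ref{lem:tanrho2} as a Gr\"onwall/variation-of-parameters estimate for the differential inequality in Lemma~\ref{lem:tanrho}. Since $v\in K^u(x)$, invariance of $K^u$ (the remark following Lemma~\ref{lem:tanrho}) gives $\tan\rho(t)\le\alpha/2<1$ for all $t\in[0,T_0]$, so \eqref{eqn:tanrho'} holds throughout the trajectory. Writing $u(t)=\tan\rho(t)$, this says $u'+\lambda\|x\|^\alpha u\le \alpha\lambda\|x\|^\alpha\frac{\tan\theta}{1+\tan^2\theta}$. Multiplying by the integrating factor $e^{J(t_0,t)}$ (recall $\frac{d}{dt}J(t_0,t)=\lambda\|x(t)\|^\alpha$ by \eqref{eqn:Jt1t2}) and integrating gives, for any $t_0\le t$,
\[
u(t)\le u(t_0)e^{-J(t_0,t)}+\alpha\,e^{-J(t_0,t)}\int_{t_0}^t e^{J(t_0,\tau)}\,\frac{\tan\theta(\tau)}{1+\tan^2\theta(\tau)}\,\lambda\|x(\tau)\|^\alpha\,d\tau .
\]
Everything then reduces to estimating the integral using the correct bound on $\tan\theta$ in each of the two regimes, where the key facts are that $\tan\theta(T_1)=1$ and that \eqref{eqn:tantheta} lets us write $\tan\theta$ in terms of $J$.

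For $t\in[T_1,T_0]$ I would take $t_0=T_1$. On this interval $\tan\theta(\tau)=e^{-J(T_1,\tau)}$ by \eqref{eqn:tantheta}, so $\frac{\tan\theta(\tau)}{1+\tan^2\theta(\tau)}\le\tan\theta(\tau)=e^{-J(T_1,\tau)}$. The two exponentials in the integrand cancel, leaving $\int_{T_1}^t\lambda\|x(\tau)\|^\alpha\,d\tau=J(T_1,t)$, which yields $\tan\rho(t)\le(\tan\rho(T_1)+\alpha J(T_1,t))e^{-J(T_1,t)}$, the second bound.

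For $t\in[0,T_1]$ I would take $t_0=0$. Here $\tan\theta(\tau)\ge1$, so $\frac{\tan\theta(\tau)}{1+\tan^2\theta(\tau)}\le 1/\tan\theta(\tau)$, and \eqref{eqn:tantheta} with $\tan\theta(T_1)=1$ gives $1/\tan\theta(\tau)=e^{-J(\tau,T_1)}$. Using $J(0,\tau)+J(\tau,T_1)=J(0,T_1)$, the integrand becomes $\alpha e^{-J(0,T_1)}e^{2J(0,\tau)}\lambda\|x(\tau)\|^\alpha=\tfrac\alpha2 e^{-J(0,T_1)}\frac{d}{d\tau}e^{2J(0,\tau)}$, so the integral is $\tfrac\alpha2 e^{-J(0,T_1)}(e^{2J(0,t)}-1)$; multiplying by $e^{-J(0,t)}$ and using $2J(0,t)-J(0,T_1)=J(0,t)-J(t,T_1)$ gives $\tan\rho(t)\le(\tan\rho_0)e^{-J(0,t)}+\tfrac\alpha2 e^{-J(t,T_1)}-\tfrac\alpha2 e^{-J(0,T_1)-J(0,t)}$, which after dropping the last (negative) term is the first bound. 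A cleaner alternative, once the candidate functions are in hand, is to verify directly that $\bar u(t)=(\tan\rho_0)e^{-J(0,t)}+\tfrac\alpha2 e^{-J(t,T_1)}$ is a supersolution of the ODE associated with \eqref{eqn:tanrho'} on $[0,T_1]$ — this collapses to the trivial inequality $1+\tan^2\theta\ge\tan^2\theta$ — together with $\bar u(0)\ge\tan\rho_0$, and conclude by comparison.

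The only real subtlety — and hence the main obstacle — is that one must not bound $\frac{\tan\theta}{1+\tan^2\theta}$ by its crude maximum $\tfrac12$: that would give only $\tan\rho(t)\le(\tan\rho_0)e^{-J(0,t)}+\tfrac\alpha2$, losing the decay of the second term as $t$ moves away from $0$ and from $T_1$. Instead one must exploit that $\tan\theta$ (on $[T_1,T_0]$) and $1/\tan\theta$ (on $[0,T_1]$) decay at exactly the rate $\lambda\|x\|^\alpha$ driving the homogeneous part, so that the exponentials in the Duhamel integral combine to produce the factor $e^{-J(t,T_1)}$ (respectively the linear-times-exponential term). One also needs to note at the outset that $\tan\rho\le\alpha/2<1$ along the whole orbit so that \eqref{eqn:tanrho'} is available; everything after that is the elementary bookkeeping above.
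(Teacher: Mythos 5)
Your proposal is correct and follows essentially the same route as the paper: the same integrating factor $e^{J(t_0,t)}$ applied to \eqref{eqn:tanrho'}, the same bounds $\frac{\tan\theta}{1+\tan^2\theta}\le 1/\tan\theta$ on $[0,T_1]$ and $\le\tan\theta$ on $[T_1,T_0]$, and the same use of \eqref{eqn:tantheta} with $\tan\theta(T_1)=1$ to convert $\tan\theta$ into exponentials of $J$; your intermediate estimate on $[0,T_1]$, including the discarded negative term $-\frac\alpha2 e^{-(J(0,T_1)+J(0,t))}$, matches \eqref{eqn:tanrho1} exactly. The preliminary appeal to cone invariance giving $\tan\rho\le\alpha/2<1$ is likewise how the paper justifies using \eqref{eqn:tanrho'} along the whole orbit (via \eqref{eqn:crude-rho}).
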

\begin{proof}
Observe that if only the first half of the right hand side of~\eqref{eqn:tanrho'} was present, then $e^{J(t_0,t)}\tan\rho$ would be non-increasing for any $t_0$.  Thus to estimate $\tan\rho$, we differentiate this quantity and see how much it may increase.

Since $\tan\theta>0$ we have $\frac{\tan\theta}{1+\tan^2\theta} \leq \min(\frac 1{\tan\theta},\tan\theta)$ -- we will use the first bound on $[0,T_1]$ and the second on $[T_1,T_0]$.  On the first interval, we obtain
\begin{multline*}
(e^{J(0,t)} \tan\rho)' \leq \alpha\lambda e^{J(0,t)} \|x\|^\alpha (\tan\theta)^{-1} \\
= \alpha\lambda e^{J(0,T_1)} \|x\|^\alpha e^{-2J(t,T_1)} 
= \frac\alpha2 e^{J(0,T_1)} \left(e^{-2J(t,T_1)}\right)',
\end{multline*}
and hence,
\[
e^{ J(0,t)} \tan\rho(t) \leq \tan\rho_0 + \frac \alpha2 e^{J(0,T_1)} \left( e^{-2 J(t,T_1)} - e^{-2 J(0,T_1)} \right).
\]
This yields the estimate
\begin{equation}\label{eqn:tanrho1}
\begin{aligned}
\tan\rho(t) &\leq (\tan\rho_0) e^{- J(0,t)} + \frac \alpha2 e^{J(t,T_1)} \left( e^{-2 J(t,T_1)} - e^{-2 J(0,T_1)} \right) \\
&= (\tan\rho_0) e^{- J(0,t)} + \frac \alpha2 \left( e^{- J(t,T_1)} - e^{-(J(0,T_1) + J(0,t))}\right),
\end{aligned}
\end{equation}
which proves the first half of the lemma.

On $[T_1,T_0]$, we use the bound $\frac{\tan\theta}{1+\tan^2\theta} \leq \tan\theta$, and so
\[
(e^{ J(T_1,t)} \tan\rho)' \leq \alpha \lambda e^{ J(T_1,t)} \|x\|^\alpha \tan\theta 
= \alpha \lambda \|x\|^\alpha,
\]
which gives
\[
e^{ J(T_1,t)} \tan\rho(t) \leq \tan\rho(T_1) + \alpha J(T_1,t)
\]
and completes the proof of the lemma.
\end{proof}

The following consequence of Lemma \ref{lem:tanrho2} is crucial to many of our later estimates.  Here and in the remainder of the proof we use a number of constants denoted $Q_i$, which will not always be explicitly introduced.  The first appearance of such a constant should be understood to mean that there is some value of this constant, independent of the choice of trajectory or of the time $t$, for which the next statement is true.

\begin{corollary}\label{cor:tanbdd}
There is $Q_0\in \RR$ such that $\int_{t_1}^{t_2} \|x\|^\alpha \tan \rho \,dt \leq Q_0$ for every choice of $x$, $v$, and $t_1,t_2\in [0,T_0]$.
\end{corollary}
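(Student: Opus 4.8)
The plan is to integrate the pointwise decay estimates for $\tan\rho$ from Lemma~\ref{lem:tanrho2} against the measure $\|x\|^\alpha\,dt$, exploiting the fact that $\|x\|^\alpha\,dt=\tfrac1\lambda\,dJ$ along the trajectory. Since the integrand $\|x\|^\alpha\tan\rho$ is nonnegative and $[t_1,t_2]\subset[0,T_0]$, it suffices to bound $I:=\int_0^{T_0}\|x\|^\alpha\tan\rho\,dt$. Let $T_1\in[0,T_0]$ be the time with $\tan\theta(T_1)=1$; this is well defined because $\tan\theta$ is strictly decreasing by Lemma~\ref{lem:tantheta}. (If $\tan\theta<1$ throughout, set $T_1=0$; if $\tan\theta>1$ throughout, set $T_1=T_0$. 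In either degenerate case the one remaining one-sided estimate from the proof of Lemma~\ref{lem:tanrho2} holds verbatim with this choice of $T_1$, so the argument below is unaffected.) Split $I$ as $\int_0^{T_1}+\int_{T_1}^{T_0}$.

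On $[0,T_1]$, insert the bound $\tan\rho(t)\le(\tan\rho_0)e^{-J(0,t)}+\tfrac\alpha2 e^{-J(t,T_1)}$. Using $\tfrac{d}{dt}J(0,t)=\lambda\|x\|^\alpha$ one computes $\int_0^{T_1}\|x\|^\alpha e^{-J(0,t)}\,dt=\tfrac1\lambda\big(1-e^{-J(0,T_1)}\big)\le\tfrac1\lambda$; and since $J(t,T_1)=J(0,T_1)-J(0,t)$, the same substitution gives $\int_0^{T_1}\|x\|^\alpha e^{-J(t,T_1)}\,dt=\tfrac1\lambda\big(1-e^{-J(0,T_1)}\big)\le\tfrac1\lambda$. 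Together with $\tan\rho_0\le\alpha/2$ from \eqref{eqn:crude-rho}, the first piece is at most $\tfrac{\alpha}{2\lambda}+\tfrac{\alpha}{2\lambda}=\tfrac{\alpha}{\lambda}$.

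On $[T_1,T_0]$, use $\tan\rho(t)\le\big(\tan\rho(T_1)+\alpha J(T_1,t)\big)e^{-J(T_1,t)}$. The first term integrates as above to at most $\tan\rho(T_1)/\lambda\le\alpha/(2\lambda)$, again using \eqref{eqn:crude-rho}. For the second term, the change of variables $u=J(T_1,t)$, $du=\lambda\|x\|^\alpha\,dt$, turns $\int_{T_1}^{T_0}\|x\|^\alpha\,\alpha J(T_1,t)e^{-J(T_1,t)}\,dt$ into $\tfrac{\alpha}{\lambda}\int_0^{J(T_1,T_0)}ue^{-u}\,du\le\tfrac{\alpha}{\lambda}\int_0^\infty ue^{-u}\,du=\tfrac{\alpha}{\lambda}$. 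Hence the second piece is at most $\tfrac{3\alpha}{2\lambda}$, and altogether $I\le\tfrac{5\alpha}{2\lambda}$; taking $Q_0=\tfrac{5\alpha}{2\lambda}$ (or any larger constant) finishes the proof.

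There is no serious obstacle here: the entire content is the two elementary integral identities $\int\|x\|^\alpha e^{-J}\,dt=O(1/\lambda)$ and $\int\|x\|^\alpha J e^{-J}\,dt\le\tfrac1\lambda\Gamma(2)$, both obtained by the substitution $u=J$. The only points needing a moment's care are the bookkeeping of which endpoint ($0$ or $T_1$) each exponential is anchored at, and the degenerate case in which $\tan\theta$ never equals $1$ on $[0,T_0]$, which is dispatched by the remark above.
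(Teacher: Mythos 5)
Your proof is correct and follows essentially the same route as the paper: split the integral at $T_1$, insert the two bounds from Lemma~\ref{lem:tanrho2}, and integrate using $dJ=\lambda\|x\|^\alpha\,dt$ (the paper recognizes exact antiderivatives where you substitute $u=J$, which is the same computation). Your extra remark on the degenerate case where $\tan\theta$ never equals $1$ is a harmless addition the paper leaves implicit.
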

\begin{proof}
Using Lemma~\ref{lem:tanrho2}, observe that on $[0,T_1]$ we have
\begin{align*}
\lambda\|x\|^\alpha\tan\rho &\leq \lambda \tan\rho_0\|x\|^\alpha e^{-J(0,t)} + \frac\alpha2 \lambda\|x\|^\alpha e^{-J(t,T_1)} \\
&= -\tan\rho_0 (e^{-J(0,t)})' + \frac \alpha2 (e^{-J(t,T_1)})',
\end{align*}
whence
\begin{equation}\label{eqn:int0T1}
\int_0^{T_1} \|x\|^\alpha\tan\rho\,dt \leq \frac 1\lambda \left(\tan\rho_0 + \frac\alpha2\right) (1 - e^{-J(0,T_1)}).
\end{equation}
Similarly, on $[T_1,T_0]$ we see that since $\tan\rho(T_1) \leq \alpha/2$, we have
\begin{align*}
\lambda\|x\|^\alpha\tan\rho &\leq \alpha \lambda \|x\|^\alpha \left(\frac 12+J(T_1,t)\right)e^{-J(T_1,t)} \\
&= \alpha \left( -\left(\frac 32+J(T_1,t)\right)e^{-J(T_1,t)} \right)',
\end{align*}
and so
\begin{equation}\label{eqn:intT1T0}
\int_{T_1}^{T_0} \|x\|^\alpha\tan\rho\,dt \leq \frac {3\alpha}{2\lambda}.
\end{equation}
The result follows since the integrand is non-negative.
\end{proof}

\subsection{Expansion near the fixed point}\label{sec:expansion}

To estimate the expansion in the unstable cone along a trajectory, we observe that by Proposition \ref{prop:generalflows}, we have
\begin{equation}\label{eqn:intexpansion}
\log\left(\frac{\|D\ph_t(x)(v)\|}{\|v\|}\right) = \int_0^t \langle \hat v,\LLL_{\hat v} \XX \rangle \,d\tau,
\end{equation}
where $\hat v(\tau) = v(\tau)/\|v(\tau)\|$.  To make the calculations simpler we drop the hat and just assume $\|v\|=1$.  Then recalling~\eqref{eqn:LvX}, 
\begin{equation}\label{eqn:normv'}
\langle v, \LLL_v \XX \rangle
= \|x\|^\alpha \big(\alpha\langle v,\hat x\rangle \langle v,A\hat x \rangle + \langle v,Av\rangle\big).
\end{equation}
We estimate the first term by observing that
\begin{align*}
\langle v,\hat x\rangle \langle v,A\hat x\rangle &= (\cos\rho\cos\theta + q_s \sin\rho\sin\theta)
(\gamma \cos\rho\cos\theta - \beta q_s\sin\rho\sin\theta) \\
&= \gamma \cos^2\rho\cos^2\theta - \ell(x,v),
\end{align*}
where the final term is
\begin{equation}\label{eqn:elllxrho}
\ell(x,v) = q_s(\beta-\gamma)\sin\rho\sin\theta\cos\rho\cos\theta + q_s^2\beta\sin^2\rho\sin^2\theta.
\end{equation}

\begin{remark}\label{rmk:harder}
In the case $\gamma=\beta$, the above yields
\begin{align*}
\langle v,\LLL_v\XX\rangle &= \|x\|^\alpha \big(\alpha\gamma(\cos^2\rho\cos^2\theta - q_s^2\sin^2\rho\sin^2\theta) + \gamma(\cos^2\rho-\sin^2\rho)\big) \\
&\geq \gamma \|x\|^\alpha (1-(2+\alpha)\sin^2\rho) \geq 0,
\end{align*}
using the fact that $\tan\rho\leq \frac 12$ and hence $\sin^2\rho\leq \frac 15$.  Thus the unstable cone $K^u(x)$ never contains any contracting vectors.  This is the case for the original Katok map. In our case contraction may nevertheless occur when $\beta\neq \gamma$; however, we argue below that the total contraction along a trajectory making a single trip through $Y$ is uniformly bounded.
\end{remark}

We see from~\eqref{eqn:elllxrho} that there exists $Q_1>0$ such that $|\ell(x,\rho)|\leq Q_1\tan\rho$. Using this in~\eqref{eqn:normv'} together with the observation that
\[
\langle v,Av\rangle = \gamma\cos^2\rho - \beta \sin^2\rho
= \gamma - \lambda\sin^2\rho,
\]
we obtain 
\begin{equation}\label{eqn:vLvX}
\langle v, \LLL_v\XX\rangle \geq \|x\|^\alpha \left( \gamma(1+\alpha \cos^2\rho\cos^2\theta) - Q_2 \tan\rho\right)
\end{equation}
for some constant $Q_2>0$.  By Corollary \ref{cor:tanbdd}, the contribution of the final term 
is uniformly bounded over all trajectories.  Together with  \eqref{eqn:intexpansion}, this shows that there is $Q_3>0$ such that
\begin{equation}\label{eqn:intexpansion2}
\log\left(\frac{\|D\ph_t(x)(v)\|}{\|v\|}\right) \geq -Q_3 + \gamma \int_0^t \|x\|^\alpha(1+\alpha\cos^2\rho\cos^2\theta) \,d\tau.
\end{equation}
for every $x$ and $t$ such that the trajectory of $x$ stays in $Z$ from time $0$ to time $t$.
We can use \eqref{eqn:intexpansion2} to verify \ref{C3}.  Write $\lambda^u_-(x) = \min(\lambda^u(x),0)$ and $\lambda^s_+(x) = \max(\lambda^s(x),0)$.  Then \eqref{eqn:intexpansion2} implies that
\begin{equation}\label{eqn:lu-}
\sum_{j=k}^{\tau(x)} \lambda^u_-(g^j(x)) \geq -Q_3
\end{equation}
for every $x$ and every $0\leq k\leq \tau(x)$.  Just as in \eqref{eqn:cotrho'}, an analogous bound to \eqref{eqn:intexpansion2} holds for vectors $v$ in the stable cone, and we get
\begin{equation}\label{eqn:ls+}
\sum_{j=k}^{\tau(x)} \lambda^s_+(g^j(x)) \leq Q_3.
\end{equation}
This immediately implies the second inequality in \ref{C3}, and for the first we observe that $\Delta(x) \leq \lambda^s_+(x) - \lambda^u_-(x)$, so \eqref{eqn:lu-} and \eqref{eqn:ls+} give
\[
\sum_{j=k}^{\tau(x)} (\lambda^u-\Delta)(g^j(x)) \geq \sum_{j=k}^{\tau(x)} (2\lambda^u_-(g^j(x)) - \lambda^s_+(g^j(x))) \geq -Q_3,
\]
which is enough to establish \ref{C3}.

Now we turn our attention to \ref{C2}, which once again requires us to control expansion along $W$, and by \eqref{eqn:intexpansion2} we see that it is important to control $\int \gamma \|x\|^\alpha\,d\tau$.  This can be done using Lemma \ref{lem:normx}.  Given a trajectory that escapes $Y$ at time $T_0$, we have
\begin{multline}\label{eqn:escaping-integral}
\int_{t_1}^{t_2} \gamma \|x(t)\|^\alpha\,dt
\geq \int_{t_1}^{t_2} \gamma (r_0^{-\alpha} + \gamma\alpha(T_0-t))^{-1}\,dt \\
=-\frac 1\alpha \log(r_0^{-\alpha} + \gamma\alpha(T_0-t))\Big|_{t_1}^{t_2} 
=\frac 1\alpha \log\left( \frac{1+r_0^\alpha \gamma\alpha(T_0-t_1)}{1+r_0^\alpha\gamma\alpha(T_0-t_2)}\right)
\end{multline}
In the next section we will frequently use this in the form
\begin{equation}\label{eqn:t1t2}
\int_{t_1}^{t_2} \langle v,\LLL_v\XX\rangle\,dt \geq
\int_{t_1}^{t_2} \gamma \|x\|^\alpha\,dt \geq  -Q_4 + \frac 1\alpha \log\left(\frac{T-t_1}{T-t_2}\right),
\end{equation}
where $T=T_0+1$ and $0\leq t_1<t_2\leq T-1$, and $Q_4$ is a constant independent of the trajectory.  For the time being, we establish a stronger expansion bound that can be used when $x(t_2)$ has escaped $Y$.

Let $s > \sqrt{\gamma/\beta}$, so $\chi=\chi(s)$ from \eqref{eqn:chi-s} in Lemma \ref{lem:bound-Ts} is less than $1$.
Given $x\in Y$ whose trajectory escapes $Y$ at time $T_0$, we put $t_0 = \max(0,T_s)$ and observe that by Corollary \ref{cor:tanbdd} and Lemma \ref{lem:int-tan-theta}, we have
\begin{equation}\label{eqn:Q0}
\int_{t_0}^{T_0} \|x\|^\alpha \tan\rho \,dt \leq Q_0, \qquad \int_{t_0}^{T_0} \|x\|^\alpha \tan\theta \,dt \leq \frac s\lambda.
\end{equation}
Moreover, Lemma \ref{lem:bound-Ts} gives
\begin{equation}\label{eqn:geq-chi}
T_0 - t_0 \geq (1-\chi) T_0.
\end{equation}
Now we can use \eqref{eqn:intexpansion2} to write
\begin{multline}\label{eqn:J-exp}
\log\left(\frac{\|D\ph_{T_0}(x)(v)\|}{\|v\|}\right) \geq -Q_3 + \gamma (1+ \alpha) \int_{t_0}^{T_0} \|x\|^\alpha \,d\tau \\
- \gamma \int_{t_0}^{T_0} \|x\|^\alpha (\sin^2\theta + \sin^2\rho + \sin^2\theta\sin^2\rho)\,d\tau
\end{multline}
for every $v\in K^u(x)$.  By \eqref{eqn:crude-rho} and the fact that $t\geq T_s$, the integrand on the second line is bounded above by $\|x\|^\alpha(\tan\theta + 2\tan\rho)$.  Thus by \eqref{eqn:Q0}, there is a constant $Q_5$ such that
\begin{equation}\label{eqn:escaping}
\log\left(\frac{\|D\ph_{T_0}(x)(v)\|}{\|v\|}\right)
\geq -Q_5 + \gamma(1+\alpha)\int_{t_0}^{T_0} \|x\|^\alpha \,d\tau.
\end{equation}
Using \eqref{eqn:escaping-integral} gives
\begin{equation}\label{eqn:escaping2}
\log\left(\frac{\|D\ph_{T_0}(x)(v)\|}{\|v\|}\right)
\geq -Q_5 + \left( 1 + \frac 1\alpha \right) \log(1+ r_0^\alpha \gamma\alpha (T_0 - t_0)).
\end{equation}
Together with \eqref{eqn:geq-chi} and the fact that $1-\chi>0$, this shows that $m_W(W(t))$ is bounded above by a constant multiple of $t^{-(1 + \frac 1\alpha)}$, establishing \ref{C2}\ref{integrable}.

\subsection{Bounded distortion near the fixed point}\label{sec:bounded-distortion}

To show \ref{C2}\ref{largereturns} and \ref{distortion}, we need to study two nearby trajectories on the same admissible manifold.  Fix $W\in \tilde\AAA$ such that $g(W)\cap Y\neq 0$, and let $x,y\in W_j$ for some $j$, where we fix an admissible decomposition as before.  Let $T_0 = \tau_j$ and let $T=T_0+1$.  Write $\bar x = \bar G(x)$ and $\bar y = \bar G(y)$, and note that by \eqref{eqn:escaping} we have
\begin{equation}\label{eqn:dxy}
d(x(t),y(t)) \leq Q_6 (T-t)^{-(1 + \frac 1\alpha)} d(\bar x,\bar y).
\end{equation}
Let $v(0)$ and $w(0)$ be unit tangent vectors to $W$ at $x(0)$ and $y(0)$, respectively. Set $v(t) = D\ph_t(x(0))(v(0))$, and similarly for $w(t)$.  We will write $\Delta v = v-w$, and similarly for other quantities.  Recall that we write $\hat v = v/\|v\|$, and similarly for $w,x,y$, etc.

Let $\eta(t) = \|\Delta \hat v(t)\| = \|\hat v(t) - \hat w(t)\|$.  Most of our work in this section will be to estimate this quantity.

\begin{lemma}\label{lem:eta'}
Writing $z = \widehat{\Delta v} = \Delta v/\|\Delta v\|$, the quantity $\eta$ satisfies 
\begin{equation}\label{eqn:eta'}
\begin{aligned}
\eta' &= -(\langle v, \LLL_v \XX\rangle + \langle w,\LLL_w \XX\rangle)\eta +
\langle z, \Delta\LLL_v\XX\rangle \\
&\leq -a(t)\eta(t) + c(t),
\end{aligned}
\end{equation}
where 
\begin{equation}\label{eqn:ac}
\begin{aligned}
a(t) &= (\gamma - Q_7\tan\rho)\|x\|^\alpha, \\
c(t) &= Q_8 (T-t)^{-2}d(\bar x, \bar y).
\end{aligned}
\end{equation}
\end{lemma}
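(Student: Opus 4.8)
The plan is to derive the displayed identity for $\eta'$ directly from the flow equations and then estimate its two terms; the only real work is in the estimate of the forcing term $c(t)$.

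\emph{Step 1: the identity.} I would start from Proposition~\ref{prop:generalflows}, which gives $\dot v = \LLL_v\XX$ along $x(t)$ and $\dot w = \LLL_w\XX$ along $y(t)$. Differentiating the normalization shows that $\hat v$ evolves by the projection of $\LLL_{\hat v}\XX$ onto the tangent space of the unit sphere at $\hat v$, i.e.\ $\tfrac{d}{dt}\hat v = \LLL_{\hat v}\XX - \hat v\langle\hat v,\LLL_{\hat v}\XX\rangle$, and similarly for $\hat w$. Subtracting these and computing $\eta' = \langle z,\tfrac{d}{dt}(\hat v-\hat w)\rangle$ (working with the unit-norm convention of \S\ref{sec:expansion}, under which $z=\widehat{\Delta v}=\widehat{\hat v-\hat w}$), the two normalization corrections combine into a multiple of $\eta$: one uses that $\hat v-\hat w\perp\hat v+\hat w$, since $\|\hat v\|=\|\hat w\|=1$, to discard the antisymmetric part, leaving exactly the coefficient $-(\langle v,\LLL_v\XX\rangle+\langle w,\LLL_w\XX\rangle)$. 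The ``raw'' part $\LLL_{\hat v}\XX - \LLL_{\hat w}\XX = \Delta\LLL_v\XX$ gives the forcing term $\langle z,\Delta\LLL_v\XX\rangle$. This is the first displayed line.

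\emph{Step 2: the damping.} For the coefficient of $\eta$ I would apply \eqref{eqn:vLvX} to $v$ at $x$ and to $w$ at $y$, getting $\langle v,\LLL_v\XX\rangle\geq\|x\|^\alpha(\gamma-Q_2\tan\rho)$ and the analogous bound for $w$; since $x(t)$ and $y(t)$ lie on a single curve $W_j\in\tilde\AAA$ of bounded length and bounded H\"older curvature, along which $d(x(t),y(t))$ is controlled by \eqref{eqn:dxy}, the quantities $\|y(t)\|^\alpha$ and the angle of $w$ differ from $\|x(t)\|^\alpha$ and $\tan\rho$ only by a bounded factor, so $\langle v,\LLL_v\XX\rangle+\langle w,\LLL_w\XX\rangle\geq 2\gamma\|x\|^\alpha-C\|x\|^\alpha\tan\rho$. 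Next, expanding $\Delta\LLL_v\XX$ via \eqref{eqn:LvX} isolates its leading piece $\|x\|^\alpha A(\hat v-\hat w)$, whose contribution to $\langle z,\Delta\LLL_v\XX\rangle$ is at most $\gamma\|x\|^\alpha\eta$ because $A$ is symmetric with largest eigenvalue $\gamma$. Together these make the $\eta$-dependent part of $\eta'$ at most $-(\gamma-Q_7\tan\rho)\|x\|^\alpha\eta=-a(t)\eta$.

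\emph{Step 3: the forcing, and the main obstacle.} The remaining pieces of $\Delta\LLL_v\XX$ from \eqref{eqn:LvX} are differences such as $(\|x\|^\alpha-\|y\|^\alpha)A\hat w$, $\|x\|^\alpha\langle\hat v,\hat x\rangle A(\hat x-\hat y)$ and $\|x\|^\alpha(\langle\hat v,\hat x\rangle-\langle\hat w,\hat y\rangle)A\hat y$ (any $\eta$-proportional remainder being absorbed into the damping coefficient). Each is estimated by combining the contraction bound \eqref{eqn:dxy}, $d(x(t),y(t))\leq Q_6(T-t)^{-(1+1/\alpha)}d(\bar x,\bar y)$, with the norm estimates of Proposition~\ref{prop:chevron}, which give $\|x(t)\|^{-\alpha}\asymp T-t$, hence $\|x(t)\|^{\alpha-1}\asymp(T-t)^{(1-\alpha)/\alpha}$ and $\|\hat x-\hat y\|\lesssim d(x,y)/\|x\|\asymp(T-t)^{-1}d(\bar x,\bar y)$. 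For instance $|\|x\|^\alpha-\|y\|^\alpha|\lesssim\|x\|^{\alpha-1}d(x,y)\lesssim(T-t)^{(1-\alpha)/\alpha-(1+1/\alpha)}d(\bar x,\bar y)=(T-t)^{-2}d(\bar x,\bar y)$, and the other pieces carry the same power of $(T-t)$, giving $\langle z,\Delta\LLL_v\XX\rangle\leq\gamma\|x\|^\alpha\eta+Q_8(T-t)^{-2}d(\bar x,\bar y)$, which with Step 2 yields the second displayed line. The hard part is precisely this last estimate: the exponent $-2$ in $c(t)$ is sharp and arises only because the degeneracy of $D\XX$ at the neutral fixed point (the $\|x\|^{\alpha-1}$ blow-up) and the merely polynomial contraction along the admissible curve (the $(T-t)^{-(1+1/\alpha)}$ of \eqref{eqn:dxy}) conspire to cancel to exactly $(T-t)^{-2}$ in every term, so all the sharp $(T-t)$-powers, together with the normalization of the tangent vectors, must be tracked carefully; by contrast the identity in Step 1 and the damping estimate in Step 2 are routine once \eqref{eqn:vLvX} is available.
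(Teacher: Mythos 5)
There is a genuine gap in your Step 2, and it is exactly at the point you dismissed as ``routine.'' First, the identity: carrying out your own Step 1 correctly (write $-\langle\LLL_v\XX,v\rangle\hat v+\langle\LLL_w\XX,w\rangle\hat w$ as a combination of $\hat v-\hat w$ and $\hat v+\hat w$ and use $\langle z,\hat v+\hat w\rangle=0$) produces the damping coefficient $\tfrac12\bigl(\langle v,\LLL_v\XX\rangle+\langle w,\LLL_w\XX\rangle\bigr)$, not the full sum; this is also what the paper's computation gives, since there $\zeta=1-\langle\hat v,\hat w\rangle=\eta^2/2$ and \eqref{eqn:zeta'2} is an identity in $\zeta$. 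So the damping budget is only about $\gamma\|x\|^\alpha\eta$ (via \eqref{eqn:vLvX}), not $2\gamma\|x\|^\alpha\eta$. Given that budget, your estimate of the leading piece $\|x\|^\alpha\langle z,A\Delta v\rangle$ by $\gamma\|x\|^\alpha\eta$ ``because the largest eigenvalue of $A$ is $\gamma$'' consumes the entire budget, and the other $\eta$-proportional piece $\alpha\|y\|^\alpha\langle\Delta v,\hat x\rangle\langle z,A\hat x\rangle$, if bounded equally crudely, is of size up to $\alpha\max(\beta,\gamma)\|y\|^\alpha\eta$, which in the dissipative case $\beta\neq\gamma$ need not be small compared with $\gamma\|x\|^\alpha\eta$. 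So your argument yields $a(t)\le Q\tan\rho\,\|x\|^\alpha$ at best (possibly negative), not $a(t)=(\gamma-Q_7\tan\rho)\|x\|^\alpha$; and the full $\gamma$ in $a(t)$ is what is used afterwards, via \eqref{eqn:t1t2} and Corollary \ref{cor:tanbdd}, to produce the $\bigl(\tfrac{T-t}{T-s}\bigr)^{1/\alpha}$ decay in \eqref{eqn:etat}--\eqref{eqn:etatleq} and hence the curvature bound for \ref{C2}.

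The missing idea is geometric: since $E^u$ is one-dimensional and both $v$ and $w$ lie in the narrow unstable cone ($\tan\rho\le\alpha/2$), the difference $\Delta v$ makes an angle at most $\rho$ with $E^s$, so $z=\widehat{\Delta v}$ has $|z_u|\le\tan\rho$. Then $\langle z,A\Delta v\rangle=\eta(\gamma z_u^2-\beta\|z_s\|^2)\le\gamma\eta\tan\rho$, and similarly $\langle z,\hat x\rangle\langle\Delta v,A\hat x\rangle\le Q_9\eta\tan\rho$ (the dangerous term $-\beta\langle z_s,x_s\rangle^2$ has a favorable sign and the cross terms carry a factor $z_u$). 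This is what makes \emph{every} $\eta$-proportional correction of order $\tan\rho\,\|x\|^\alpha\eta$ and preserves the coefficient $\gamma$ in $a(t)$; without it the lemma as stated cannot be obtained by your route. Your Step 3, by contrast, is essentially the paper's Lemma \ref{lem:Deltanorms}: the $(T-t)^{-2}$ bound follows from \eqref{eqn:dxy} together with the one-sided bound $\|x(t)\|^{-\alpha}\le r_0^{-\alpha}+\gamma\alpha(T_0-t)$ from \eqref{eqn:xalph2} (note that the two-sided asymptotics of Proposition \ref{prop:chevron} you invoke hold only piecewise in $t$ and are not needed here), so that part of your plan is sound.
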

\begin{proof}
Note that
\[
\eta^2 = \langle \hat v-\hat w,\hat v-\hat w\rangle = 2(1-\langle \hat v,\hat w\rangle),
\]
so writing $\zeta = 1 - \langle\hat v, \hat w\rangle$, we have 
\begin{equation}\label{eqn:zeta'}
\eta\eta' = \zeta' = -\langle (\hat v)', \hat w\rangle - \langle\hat v,(\hat w)'\rangle.
\end{equation}
Differentiating the unit tangent vector $\hat v$ gives
\[
(\hat v)' = \left(\frac v{\|v\|}\right)'
= \frac{\|v\| v' - v \langle v',\hat v\rangle}{\|v\|^2}
= \frac{v' - \hat v\langle v', \hat v\rangle}{\|v\|}
= \LLL_{\hat v}\XX - \langle \LLL_{\hat v}\XX, \hat v\rangle \hat v,
\]
where the last equality uses Proposition \ref{prop:generalflows} together with linearity of the Lie derivative in $v$.  Thus \eqref{eqn:zeta'} yields
\[
\zeta' = -\big\langle \LLL_{\hat v}\XX - \langle \LLL_{\hat v}\XX, \hat v\rangle \hat v, \hat w\big\rangle
-\big\langle \hat v, \LLL_{\hat w}\XX - \langle \LLL_{\hat w}\XX, \hat w\rangle \hat w \big\rangle.
\]
Since the right-hand side involves no derivatives we may safely simplify the notation by considering a fixed time $t$ and assuming that $v(t),w(t)$ are normalized so that $\hat v = v$ and $\hat w=w$.    Then we have
\begin{equation}\label{eqn:zeta'2}
\begin{aligned}
\zeta' &= \langle \LLL_v\XX, v\rangle \langle v,w\rangle - \langle \LLL_v\XX, w\rangle
+ \langle \LLL_w\XX,w\rangle \langle v,w\rangle - \langle v,\LLL_w\XX\rangle \\
&= (\langle v, \LLL_v \XX\rangle + \langle w,\LLL_w \XX\rangle)(1-\zeta)
- \langle w,\LLL_v \XX\rangle - \langle v,\LLL_v \XX \rangle \\
&= -(\langle v, \LLL_v \XX\rangle + \langle w,\LLL_w \XX\rangle)\zeta +
\langle \Delta v, \Delta\LLL_v\XX\rangle.
\end{aligned}
\end{equation}
Dividing by $\eta$ yields the first half of \eqref{eqn:eta'}.  Now we observe that \eqref{eqn:LvX} gives
\begin{equation}\label{eqn:Deltav'}
\begin{aligned}
\Delta &\LLL_v\XX = \Delta\Big(\|x\|^\alpha \big(\alpha \langle v,\hat x\rangle A\hat x + Av \big)\Big) \\
&= \Delta(\|x\|^\alpha) \big(\alpha \langle v,\hat x\rangle A\hat x + Av \big) \\
&\quad+ \|y\|^\alpha \Big( \alpha \langle \Delta v, \hat x\rangle A\hat x
+ \alpha \langle w, \Delta\hat x\rangle A\hat x
+ \alpha\langle w,\hat y\rangle A(\Delta \hat x)
+ A\Delta v
\Big).
\end{aligned}
\end{equation}
We start by bounding the first and the last terms in the last line.  Let $\rho(t)$ be as in the previous section, so that $v,w$ are both within $\rho$ of $E^u$.  It follows that $\measuredangle(\Delta v,E^s) \leq \rho$.  Write $z = \widehat{\Delta v}= z_u + z_s$, where $z_{u,s}\in E^{u,s}$.  Decomposing $\hat x$ as $\hat x = x_u + x_s$ and using $A = -\beta \Id_u + \gamma \Id_s$, we have
\begin{align*}
\langle z,\hat x\rangle \langle \Delta v, A\hat x\rangle
&= \eta (z_u x_u + \langle z_s, x_s\rangle )(\gamma z_u x_u -\beta \langle z_s, x_s\rangle) \\
&\leq Q_9 \eta |z_u| \leq Q_9 \eta \tan\rho,
\end{align*}
and similarly,
\[
\langle z, A\Delta v\rangle = \eta(\gamma z_u^2 - \beta \|z_s\|^2) \leq \gamma \eta \tan \rho,
\]
so that using the first half of \eqref{eqn:eta'} together with \eqref{eqn:Deltav'} and the estimate \eqref{eqn:vLvX} on $\langle v,\LLL_v\XX\rangle$, we have
\begin{equation}\label{eqn:eta'2}
\eta' \leq a(t) \eta + b(t),
\end{equation}
where
\begin{equation}\label{eqn:bt}
\begin{aligned}
b(t) &= \Delta(\|x\|^\alpha)\big(\alpha \langle v,\hat x\rangle \langle z, A\hat x\rangle  + \langle z, Av\rangle \big) \\
&\qquad
+ \alpha \|y\|^\alpha\big( \langle w,\Delta \hat x\rangle \langle z, A\hat x\rangle + \langle w,\hat y\rangle \langle z, A(\Delta\hat x)\rangle\big).
\end{aligned}
\end{equation}

\begin{lemma}\label{lem:Deltanorms}
There are constants $Q_{10},Q_8$ such that
\begin{align*}
\|\Delta\hat x\| &\leq Q_{10}(T-t)^{-1} d(\bar x,\bar y), \\
\Delta(\|x\|^\alpha) &\leq Q_8(T-t)^{-2} d(\bar x,\bar y).
\end{align*}
\end{lemma}
\begin{proof}
Using \eqref{eqn:xalph2} gives
\begin{equation}\label{eqn:normxgeq}
\|x(t)\| \geq r_0(1+r_0^\alpha\gamma\alpha(T_0-t))^{-\frac 1\alpha},
\end{equation}
and \eqref{eqn:escaping} gives
\begin{equation}\label{eqn:Deltax}
\|\Delta x\| \leq Q_{12} \big(1+r_0^\alpha \gamma\alpha \chi (T_0-t)\big)^{-(1 + \frac 1\alpha)} d(\bar x, \bar y),
\end{equation}
Thus
\begin{align*}
\|\Delta \hat x\| &\leq \frac{\|\Delta x\|}{\|x\|}
\leq Q_{12} \big(1+r_0^\alpha \gamma\alpha \chi (T_0-t)\big)^{-(1 + \frac 1\alpha)}
\big(1+r_0^\alpha \gamma\alpha (T_0-t)\big)^{\frac 1\alpha}d(\bar x, \bar y) \\
&\leq Q_{10}\big(1+(T_0-t)\big)^{-(1 + \frac 1\alpha)}
\big(1+(T_0-t)\big)^{\frac 1\alpha}d(\bar x, \bar y),
\end{align*}
which proves the first half.  Similarly,
\begin{align*}
\Delta(\|x\|^\alpha) &\leq \alpha \|\Delta x\| \min(\|x\|,\|y\|)^{\alpha-1}
\end{align*}
establishes the second half and completes the proof of Lemma \ref{lem:Deltanorms}.
\end{proof}

We can now complete the proof of Lemma \ref{lem:eta'}.  It suffices to apply Lemma \ref{lem:Deltanorms} to \eqref{eqn:bt} and use this estimate in the first half of \eqref{eqn:eta'}, which yields the estimate in the second half of \eqref{eqn:eta'} and \eqref{eqn:ac}.
\end{proof}

To estimate the value of $\eta(t)$ using Lemma \ref{lem:eta'}, we let $I(t_1,t_2) = \int_{t_1}^{t_2} a(t)\,dt$, where $a(t)$ is as in \eqref{eqn:ac}.  Then Lemma \ref{lem:eta'} gives
\begin{align*}
(e^{I(0,t)}\eta)' &= e^{I(0,t)}  (a(t) \eta + \eta') \leq e^{I(0,t)} c(t);
\end{align*}
integrating, we obtain
\[
e^{I(0,t)}\eta(t) \leq \eta(0) + \int_0^t c(s) e^{I(0,s)}\,ds.
\]
Solving for $\eta(t)$, we have
\begin{equation}\label{eqn:etat}
\eta(t) \leq \eta(0) e^{-I(0,t)} + \int_0^t c(s) e^{-I(s,t)}\,ds.
\end{equation}
Now we use \eqref{eqn:t1t2} and Corollary \ref{cor:tanbdd} to get
\begin{align*}
\eta(t) &\leq \eta(0) e^{-Q_4} \left( \frac{T-t}{T}\right)^{\frac 1\alpha}
+ \int_0^t Q_8 (T-s)^{-2} d(\bar x, \bar y) e^{-Q_4} \left(\frac{T-t}{T-s}\right)^{\frac 1\alpha}\,ds
\end{align*}
and note that the integral is bounded above by
\[
Q_{13}(T-t)^{\frac 1\alpha} d(\bar x, \bar y) (T-s)^{-(1 + \frac 1\alpha)}\Big|_0^t
\leq
Q_{13}(T-t)^{-1} d(\bar x,\bar y).
\]
Thus we have
\begin{equation}\label{eqn:etatleq}
\eta(t) \leq Q_{13} \left( \eta(0) \left(1 - \frac tT\right)^{\frac 1\alpha}
+ (T-t)^{-1} d(\bar x,\bar y)\right).
\end{equation}
Now since $x,y\in W$ for some $W\in \AAA$, the H\"older property of $TW$ guarantees that 
\[
\eta(0) = \|\Delta v(0)\| \leq L d(x,y)^\alpha
\leq L Q_6^\alpha T^{-(\alpha + 1)} d(\bar x,\bar y)^\alpha,
\]
where the second inequality uses \eqref{eqn:dxy}.
We conclude that
\begin{equation}\label{eqn:Deltavt}
\|\Delta v(t)\| \leq Q_{14} \left( T^{-(\alpha+1)} \left(1-\frac tT\right)^{\frac 1\alpha} + (T-t)^{-1} \right) d(\bar x,\bar y)^\alpha.
\end{equation}
In particular, by choosing $r_1/r_0$ sufficiently large, we guarantee that $\bar G(W_j)$ has H\"older curvature bounded by $L$, which establishes \ref{C2}\ref{largereturns}.

It only remains to get the bounded distortion estimates.  For this we observe that
\begin{equation}\label{eqn:big-Delta}
\begin{aligned}
|\Delta\langle v,\LLL_v \XX\rangle|
&\leq |\langle \Delta v, \LLL_v \XX \rangle| + |\langle v, \Delta \LLL_v \XX \rangle| \\
&\leq Q_{15} \|\Delta v\| \|x\|^\alpha + \Delta(\|x\|^\alpha)\big(\alpha \langle v,\hat x\rangle \langle v, A\hat x\rangle  + \langle v, Av\rangle \big) \\
&\qquad
+ \alpha \|y\|^\alpha\big( \langle w,\Delta \hat x\rangle \langle v, A\hat x\rangle + \langle w,\hat y\rangle \langle v, A(\Delta\hat x)\rangle\big) \\
&\leq Q_{15} \|\Delta v\| \|x\|^\alpha + Q_{16} \big(\Delta(\|x\|^\alpha) + \|y\|^\alpha \|\Delta \hat x\| \big),
\end{aligned}
\end{equation}
where the second inequality uses \eqref{eqn:Deltav'}.  Fix $\kappa \in (0,1)$ and let $\beta',\gamma'$ be as in Proposition \ref{prop:chevron}, so that we have
\begin{equation}\label{eqn:norm-alpha}
\begin{alignedat}{3}
\|x\|^\alpha, \|y\|^\alpha
&\leq (r_0^{-\alpha} + \alpha\beta' t)^{-1} 
\quad&&\text{for all } t\in [0,\kappa T], \\
\|x\|^\alpha,\|y\|^\alpha
&\leq (r_0^{-\alpha} + \alpha\gamma'(T-t))^{-1}
\quad&&\text{for all } t\in [\kappa T,T].
\end{alignedat}
\end{equation}
Now we use \eqref{eqn:Deltavt}, \eqref{eqn:norm-alpha}, and Lemma \ref{lem:Deltanorms} to show that the total distortion $|\Delta \int_0^T \langle v,\LLL_v\mathcal{X}\rangle\,dt|$ is uniformly bounded independently of the trajectory $x$ and its length $T$.  To this end, it suffices to obtain a uniform bound for the integral $\int_1^{T-1} |\Delta \langle v,\LLL_v\mathcal{X}\rangle|\,dt$.  On $[1,T-1]$, \eqref{eqn:Deltavt} gives
\[
\|\Delta v(t)\| \leq Q_{17} (T-t)^{-1} d(\bar x,\bar y)^\alpha,
\]
and \eqref{eqn:norm-alpha} yields
\[
\|x\|^\alpha, \|y\|^\alpha \leq \begin{cases}
Q_{18} t^{-1} & t\in [1,kT], \\
Q_{18} (T-t)^{-1} & t\in [kT,T-1].\end{cases}
\]
Together with \eqref{eqn:big-Delta} and the bounds on $\Delta(\|x\|^\alpha)$ and $\|\Delta\hat{x}\|$ from Lemma \ref{lem:Deltanorms}, these show that for every $t\in [1,\kappa T]$, we have
\[
|\Delta\langle v,\LLL_v\mathcal{X}\rangle|
\leq Q_{19} (T-t)^{-1} t^{-1} d(\bar{x},\bar{y})^\alpha,
\]
while for $t\in [\kappa T,T-1]$, we have
\[
|\Delta\langle v,\LLL_v\mathcal{X}\rangle|
\leq Q_{20} (T-t)^{-2}d(\bar{x},\bar{y})^\alpha.
\]
Integrating gives the uniform upper bound
\begin{multline*}
\int_1^{T-1} \frac{|\Delta\langle v,\LLL_v\mathcal{X}\rangle|}
{d(\bar{x},\bar{y})^\alpha} \,dt
\leq \int_1^{\kappa T} Q_{19} (T-t)^{-1} t^{-1} \,dt
+ \int_{\kappa T}^{T-1} Q_{20} (T-t)^{-2}\,dt \\
\leq Q_{19}((1-\kappa)T)^{-1} \kappa T + Q_{20} \int_1^\infty s^{-2}\,ds
= Q_{19} \frac \kappa{1-\kappa} + Q_{20}.
\end{multline*}
Together with \eqref{eqn:intexpansion}, this yields 
\begin{equation}\label{eqn:bdd-distortion}
\log\bigg(\frac{\|D\ph_{T-1}(\ph_1(x))|_{T_x W}\|}
{\|D\ph_{T-1}(\ph_1(y))|_{T_y W}\|}
\bigg)
\leq \bigg(Q_{19} \frac{\kappa}{1-\kappa} + Q_{20}\bigg) d(\bar{x},\bar{y})^\alpha,
\end{equation}
which gives the desired bounded distortion estimate and completes the proof.

\bibliographystyle{amsalpha}
\bibliography{eh-srb}

\end{document}